\newtheorem{Thm}{Theorem}[section]
\newtheorem{Prop}[Thm]{Proposition}
\newtheorem{Lem}[Thm]{Lemma}
\newtheorem{Def}[Thm]{Definition}
\newtheorem{Conj}[Thm]{Conjecture}
\theoremstyle{definition}
\newtheorem{Ex}[Thm]{Example}
\newtheorem{Rem}[Thm]{Remark}
\newcommand{\C}{\mathbb{C}}
\newcommand{\R}{\mathbb{R}}
\newcommand{\Z}{\mathbb{Z}}
\newcommand{\Q}{\mathbb{Q}}
\newcommand{\bS}{\mathbb{S}}
\newcommand{\PP}{\mathbb{P}}
\newcommand{\bP}{\mathbb{P}}
\newcommand{\der}{\mathrm{d}}
\newcommand{\GL}{\mathop{\mathrm{GL}}\nolimits}
\newcommand{\U}{\mathop{\mathrm{U}}\nolimits}
\newcommand{\Spec}{\mathop{\mathrm{Spec}}\nolimits}
\newcommand{\ev}{\mathop{\mathrm{ev}}\nolimits}
\newcommand{\Hol}{\mathop{\mathrm{Hol}}\nolimits}
\newcommand{\pt}{\mathrm{pt}}
\newcommand{\cD}{\mathcal{D}}
\newcommand{\tZ}{\tilde{Z}}
\begin{document}
\title{Geometric transitions and SYZ mirror symmetry} 
\author{Atsushi Kanazawa \ \ \ Siu-Cheong Lau}
\date{}

\begin{abstract}
We prove that generalized conifolds and orbifolded conifolds are mirror symmetric under the SYZ program with quantum corrections.  
Our work mathematically confirms the gauge-theoretic assertion of Aganagic--Karch--L\"ust--Miemiec,  
and also provides a supportive evidence to Morrison's conjecture that geometric transitions are reversed under mirror symmetry. 
\end{abstract}

\maketitle


\section{Introduction}

In \cite{Mor}, Morrison proposed that geometric transitions are reversed under mirror symmetry. 
A geometric transition is a birational contraction followed by a complex smoothing, or in the reverse way, applied to a K\"ahler manifold (see a nice review \cite{Ros} by Rossi) .  
We will denote a geometric transition by $\widehat{X} \dashrightarrow X \rightsquigarrow \widetilde{X}$, 
where $\widehat{X} \dashrightarrow X$ is a birational contraction and $X \rightsquigarrow\widetilde{X}$ is a smoothing. 
The conjecture can be formulated as follows.

\begin{Conj}[Morrison \cite{Mor}]
Let $\widehat{X}$ and $\widetilde{X}$ be Calabi--Yau manifolds, and suppose they are related by a geometric transition $\widehat{X} \dashrightarrow X \rightsquigarrow \widetilde{X}$. 
Suppose $Y_1$ and $Y_2$ are the mirrors of $\widehat{X}$ and $\widetilde{X}$ respectively. 
Then there exists a geometric transition $Y_2 \dashrightarrow Y \rightsquigarrow Y_1$ relating $Y_1$ and $Y_2$.
\end{Conj}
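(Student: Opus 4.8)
The plan is to realize the conjecture through the Strominger--Yau--Zaslow (SYZ) construction, using special Lagrangian torus fibrations as the bridge between a Calabi--Yau and its mirror. Since the full conjecture for arbitrary compact Calabi--Yau threefolds is out of reach, I would first isolate the local geometry in which the transition takes place --- here the (punctured) generalized and orbifolded conifolds --- and establish the statement there, expecting that this local model governs the change of complex and K\"ahler moduli across the transition. Concretely, I would equip $\widehat{X}$ and $\widetilde{X}$ with Lagrangian torus fibrations $\mu_1\colon \widehat{X}\to B_1$ and $\mu_2\colon \widetilde{X}\to B_2$ whose discriminant loci encode the vanishing cycles, and then build the mirrors $Y_1$ and $Y_2$ as the duals of these fibrations, corrected by counts of holomorphic discs (open Gromov--Witten invariants) bounded by the fibers.

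The key mechanism is the complementary behavior of the two halves of a conifold transition. A birational contraction $\widehat{X} \dashrightarrow X$ collapses a collection of exceptional rational curves to ordinary double points, decreasing $h^{1,1}$; the subsequent smoothing $X \rightsquigarrow \widetilde{X}$ replaces each node by a vanishing three-sphere $S^3$, increasing $h^{2,1}$. Because SYZ mirror symmetry exchanges $h^{1,1}$ and $h^{2,1}$ --- equivalently, it interchanges the $2$-cycles and $3$-cycles carried by the Lagrangian fibers and their duals --- a contraction performed on the $\widehat{X}$-side must reappear as a smoothing on the $Y_1$-side, and conversely. Thus the strategy is: first, describe the transition as an explicit surgery on the base $B$ and on the discriminant locus of the SYZ fibration; second, show that dualizing the fibration sends the ``contracting'' surgery to the ``smoothing'' surgery; third, assemble these into a geometric transition $Y_2 \rightsquigarrow Y \dashrightarrow Y_1$ with $Y$ the dual of the singular total space $X$.

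I expect the genuine difficulty to lie in the quantum corrections. The naive (semi-flat) dual fibration produces only an approximate, and generally singular, mirror; to obtain the correct complex structure on $Y_1$ and $Y_2$ one must sum the instanton corrections coming from holomorphic discs, and these corrections jump across the walls of the base where the disc counts change (wall-crossing). Near the discriminant locus --- precisely where the transition is localized --- this correction series is most delicate, and controlling it is what upgrades the Hodge-theoretic heuristic into an actual isomorphism of mirror families. A secondary obstacle is globalization: even granting the local statement, gluing the corrected local mirrors into compact $Y_1$ and $Y_2$ and checking that the two surgeries match requires compatibility of the wall-and-chamber structures across the whole base. I would therefore first settle the result for the punctured local conifolds, where the fibration and its corrections can be written down explicitly, and treat that as the essential content confirming Morrison's reversal.
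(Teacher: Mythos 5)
The statement you are addressing is stated in the paper as a \emph{conjecture} (due to Morrison), and the paper does not prove it; it only proves a local instance --- the SYZ mirror duality between punctured generalized conifolds $G_{k,l}$ and punctured orbifolded conifolds $O_{k,l}$ --- and offers this as ``supportive evidence.'' Your proposal is accordingly not a proof but a strategy outline, and to your credit you say so: you correctly identify the SYZ mechanism, the exchange of $h^{1,1}$ and $h^{2,1}$ as the heuristic reason contractions should dualize to smoothings, and the need for disc-counting quantum corrections and wall-crossing near the discriminant locus. The local half of your plan is essentially what the paper carries out (Theorem \ref{MainTheorem}), though you miss one subtlety the authors emphasize: the Lagrangian fibration must be chosen \emph{compatibly with the anti-canonical divisor} one removes --- for $\widehat{G_{k,l}}$ the standard Gross fibration fails to produce the orbifolded conifold as mirror, and a ``doubled'' Gross fibration is required. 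This choice is not automatic and is a key step, not a routine application of the semi-flat-plus-corrections recipe.

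The genuine gap is the globalization step, and it is worse than a technical obstacle: the paper itself points out (Section \ref{Discussion}, citing Davies) that the naive global statement fails --- some conifold transitions are mirror to \emph{hyperconifold} transitions, which are not of the expected type. So your second step, ``dualizing the fibration sends the contracting surgery to the smoothing surgery,'' cannot be established in the generality the conjecture demands by patching local models; the wall-and-chamber structures need not glue in the way your plan assumes, and the mirror of a contraction can be a surgery of a genuinely different kind. In short: your proposal reproduces the paper's philosophy and its local content in outline, but neither you nor the paper proves the conjecture as stated, and the globalization you defer is precisely where known counterexamples to the naive mechanism live.
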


The present paper investigates mirror symmetry for geometric transitions of two specific types of local singularities, namely generalized conifolds and orbifolded conifolds. 
Let us first recall mirror symmetry for a conifold. 
A conifold is an isolated singularity defined by $\{xy-zw=0\} \subset \C^4$. 
It is an important singularity appearing in algebraic geometry and also plays a special role in superstring theory. 
A folklore mirror symmetry for the conifold \cite{Mor,Sze} that the deformed conifold is mirror symmetric to the resolved conifold 
can be refined in the framework of SYZ mirror symmetry as follows. 
\begin{Thm}[Conifold case of Theorem \ref{MainTheorem}]
Let $\widehat{X}:=\mathcal{O}_{\PP^1}(-1)^{\oplus2}\setminus D$ be the resolved conifold with a smooth anti-canonical divisor $D$ removed, and 
$$
\widetilde{X}:=\{(x,y,z,w) \in \C^4 \ | \  xy-zw=1\} \setminus ( \{z = 1\} \cup \{w = 1\})
$$
the deformed conifold with the anti-canonical divisor $\{z = 1\} \cup \{w = 1\}$ removed. 
Then $\widehat{X}$ and $\widetilde{X}$ are SYZ mirror to each other.  
\end{Thm}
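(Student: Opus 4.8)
The plan is to realize $\widehat{X}$ as a toric Calabi--Yau threefold with a smooth anticanonical divisor removed, equip it with a Lagrangian torus fibration of Gross type, and then build the SYZ mirror as the moduli space of fibres decorated with flat $U(1)$-connections, with its complex structure corrected by open Gromov--Witten invariants. Recall that the resolved conifold $\mathcal{O}_{\PP^1}(-1)^{\oplus 2}$ is a small resolution of $\{xy-zw=0\}$, hence a toric Calabi--Yau threefold whose fan has rays generated by $(0,0,1),(1,0,1),(0,1,1),(1,1,1)$, triangulated so that the interior edge records the exceptional $\PP^1$. Removing a smooth anticanonical divisor $D$ (a smoothing of the toric boundary) is what makes every fibre of the Gross fibration a genuine Lagrangian $3$-torus, and thereby allows one to dualise. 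Throughout I invoke the SYZ program with quantum corrections for toric Calabi--Yau manifolds.

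Concretely, I would first fix the holomorphic function $w$ cutting out $D$ and take the Gross fibration $\rho=(\mu_{T^2},|w-c|^2)\colon \widehat{X}\to \R^2\times\R_{\ge 0}$, whose $T^2$-factor is the moment map for the subtorus preserving the holomorphic volume form and whose last factor is a circle in the $w$-plane. The base carries a wall, namely the locus over which a fibre bounds a nonconstant Maslov index $0$ holomorphic disc, separating the base into chambers. Over each chamber the pairs (fibre, flat connection) furnish a semiflat chart $\C\times(\C^*)^2$: two $\C^*$-valued coordinates $z_1,z_2$ from the $T^2$-directions, and one $\C$-valued coordinate recording the holonomy and disc area in the remaining $S^1\times\R_{\ge 0}$ direction. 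The SYZ mirror is obtained by gluing these charts across the wall, and the gluing of the $\C$-coordinates is corrected by the generating function of Maslov index $2$ holomorphic discs bounded by the fibres, producing a relation of the form $uv=W$.

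The heart of the argument is this disc count. Since the resolved conifold has no compact toric divisor, the mirror map is trivial and the disc potential is an honest Laurent polynomial rather than an infinite series: there is one family of basic discs for each of the four rays, and the disc meeting the divisor associated with $(1,1)$ differs from the others by the class of the exceptional $\PP^1$, contributing the Kähler parameter $q=e^{-A}$ (complexified by the $B$-field). Assembling the corrected gluing then yields
$$
\check{X}=\Big\{(u,v,z_1,z_2)\in\C^2\times(\C^*)^2 \ \Big|\ uv=1+z_1+z_2+q\,z_1 z_2\Big\}
$$
as the SYZ mirror of $\widehat{X}$. I expect this wall-crossing step to be the main obstacle: one must check that the quantum corrections are precisely the single $q$-factor above and that the two charts glue to a smooth threefold, since it is exactly this instanton-corrected smoothing that converts the (singular) naive semiflat gluing into a deformed conifold, reflecting Morrison's reversal of the geometric transition.

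It then remains to identify $\check{X}$ with $\widetilde{X}$, which is routine. Completing the bilinear form,
$$
1+z_1+z_2+q\,z_1 z_2=q\Big(z_1+\tfrac1q\Big)\Big(z_2+\tfrac1q\Big)+\Big(1-\tfrac1q\Big),
$$
so setting $C=z_1+\tfrac1q$, $D=q z_2+1$ and $\lambda=1-\tfrac1q\ne 0$ (as $q\ne 1$) gives $uv-CD=\lambda$; rescaling $u,v,C,D$ by $\sqrt{\lambda}$ turns this into $\{xy-zw=1\}$. Under this change of variables the conditions $z_1,z_2\in\C^*$ become the removal of two divisors $\{z=\mathrm{const}\}$ and $\{w=\mathrm{const}\}$, which the automorphisms $(x,y,z,w)\mapsto(tx,t^{-1}y,sz,s^{-1}w)$ of the deformed conifold normalise to $\{z=1\}\cup\{w=1\}$, the chosen anticanonical divisor. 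Hence $\check{X}\cong\widetilde{X}$, as claimed.
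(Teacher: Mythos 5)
Your computation of the mirror of $\widehat{X}$ is essentially the paper's own argument in the direction ``from the resolved side to the deformed side'' (Section \ref{sec:OtoG} specialized to $k=l=1$): since the smooth anticanonical divisor $D$ is the one compatible with the Gross fibration, one runs the construction of \cite{CLL} with the wall at the level set containing the interior discriminant leg, the four basic disc classes contribute $1+z_1+z_2+qz_1z_2$ with the $(1,1)$-disc shifted by the exceptional curve class, and the mirror map is trivial because there is no compact toric divisor. That part is correct and matches the paper. Two caveats on your closing identification: first, the normalization of the removed divisors to $\{z=1\}\cup\{w=1\}$ by the torus $(x,y,z,w)\mapsto(tx,t^{-1}y,sz,s^{-1}w)$ does not work for arbitrary $q$ --- in your coordinates the two positions $a,b$ satisfy $ab=1/(q-1)$, which is invariant under this torus, so the location of the removed divisor is itself a modulus of the pair (it is precisely the mirror of the K\"ahler parameter $q$); the statement as written implicitly fixes a particular $q$. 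This is a pedantic point the paper also glosses over, but you should not claim the normalization is ``routine'' for all $q$.

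The genuine gap is that you have only produced $\widetilde{X}$ from $\widehat{X}$, whereas the theorem asserts they are SYZ mirror \emph{to each other}, and the paper's notion (and one of its advertised points, the involutivity of the construction) requires running the SYZ construction on $\widetilde{X}$ as well and recovering $\widehat{X}$. The deformed conifold is not toric, so the Gross-type fibration is unavailable there; the paper (Section \ref{sec:smGtoO}, following \cite{AAK}) instead uses the Hamiltonian $S^1$-action $(x,y)\mapsto(tx,t^{-1}y)$ and the conic fibration of $\widetilde{X}$ over $(\C\setminus\{1\})^2$, obtaining a Lagrangian fibration whose wall is the (compactified) amoeba of the discriminant curve $\{\sum a_{ij}z^iw^j=0\}$; the base splits into $(k+1)(l+1)=4$ chambers, the wall-crossing factors are $1+Z$ in each of the two directions, and the glued mirror is $U_1V_1=1+Z$, $U_2V_2=1+Z$, i.e.\ the (resolved) conifold. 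None of this appears in your proposal, and it cannot be deduced from the toric computation you did; you would need to supply this second fibration and its wall-crossing to complete the proof of the statement as phrased.
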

Although removing the divisors certainly does not affect the local geometry of the singularity, 
it is important when we discuss, for example, wrapped Fukaya categories and homological mirror symmetry\footnote{We are grateful to Murad Alim for informing us about the importance of this issue.}. 
 
We now focus on two natural generalizations of the conifold: generalized conifolds and orbifolded conifolds. 
For integers $k,l \ge 1$, a generalized conifold is given by 
$$
G_{k,l}^\sharp:=\{(x,y,z,w) \in \C^4\ | \  xy-(1+z)^k(1+w)^l=0\}
$$
and an orbifolded conifold is given by 
$$
O_{k,l}^\sharp:=\{(u_1,v_1,u_2,v_2,z) \in \C^5 \ | \  u_1v_1-(1+z)^k=u_2v_2-(1+z)^l=0\}. 
$$
(We have made a change of coordinates, namely $z \mapsto 1+z$ and $w \mapsto 1+w$, for later convenience.)  They reduce to the conifold when $k=l=1$. 
The punctured generalized conifold is defined as $G_{k,l}:=G_{k,l}^\sharp\setminus D_G$, 
where $D_{G}:=\{z = 0\} \cup \{w=0\}$ is a normal-crossing anti-canonical divisor of $G_{k,l}^\sharp$, 
and the punctured orbifolded conifold as $O_{k,l}:=O_{k,l}^\sharp\setminus D_O$, where $D_{O}:=\{z = 0\}$ is a smooth anti-canonical divisor of $O_{k,l}^\sharp$. 
As is the case of the conifold, their symplectic structures and complex structures are governed by the crepant resolutions and deformations respectively. 
The main theorem of the present paper is the following.  

\begin{Thm}[Theorem \ref{MainTheorem}] \label{MainTheoremIntro}
The punctured generalized conifold $G_{k,l}$ is mirror symmetric to the punctured orbifolded conifold  $O_{k,l}$ in the sense that 
the deformed punctured generalized conifold $\widetilde{G_{k,l}}$ is SYZ mirror symmetric to the resolved punctured orbifolded conifold $\widehat{O_{k,l}}$, 
and the resolved punctured generalized conifold $\widehat{G_{k,l}}$ is SYZ mirror symmetric to the deformed punctured orbifolded conifold $\widetilde{O_{k,l}}$. 
$$
\xymatrix{
\widetilde{G_{k,l}}\ar@{<->}[d]_{SYZ} &\ar@{~>}[l] G_{k,l} \ar@{<->}[d]^{MS} & \ar@{.>}[l]  \widehat{G_{k,l}} \ar@{<->}[d]^{SYZ}  \\
\widehat{O_{k,l}} \ar@{.>}[r] & O_{k,l}\ar@{~>}[r] & \widetilde{O_{k,l}}. 
}
$$
\end{Thm}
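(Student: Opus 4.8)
The plan is to establish the two SYZ correspondences encoded in the diagram, namely $\widetilde{G_{k,l}}\leftrightarrow\widehat{O_{k,l}}$ and $\widetilde{O_{k,l}}\leftrightarrow\widehat{G_{k,l}}$, by running the SYZ construction with quantum corrections on the resolved (hence toric) member of each pair, using the conifold case of the theorem as the local building block. The first observation is that each space carries a natural conic-bundle and Lagrangian-fibration structure compatible with the punctures: the resolved spaces $\widehat{G_{k,l}}$ and $\widehat{O_{k,l}}$ are toric Calabi--Yau threefolds, which I would equip with a Gross-type special Lagrangian torus fibration whose generic fiber is a Lagrangian $T^3$ and whose discriminant locus is dictated by the toric boundary and the chain of exceptional $\PP^1$'s. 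On the deformed (smoothed) spaces $\widetilde{G_{k,l}}$ and $\widetilde{O_{k,l}}$ I would instead use the Lagrangian fibration adapted to a conic presentation $\{uv=g\}$, in which the $S^1$-factor comes from the phase of the conic fiber and the remaining $T^2$ from the base.

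For the correspondence $\widehat{O_{k,l}}\leftrightarrow\widetilde{G_{k,l}}$ I start from the toric space $\widehat{O_{k,l}}$ and apply the toric SYZ construction of Chan--Lau--Leung: away from the discriminant the dual torus fibration is semi-flat, and the mirror is assembled by gluing the semi-flat charts using the wall-crossing transformations generated by Maslov-index-two holomorphic disks bounded by the Lagrangian fibers. The outcome is a Landau--Ginzburg model whose mirror manifold is cut out by an equation of the form $uv=W$, where $W$ is the generating function of the relevant open Gromov--Witten invariants. The central computation is the evaluation of $W$: I expect the contributions of the toric boundary divisors, corrected by the open invariants attached to the exceptional curves of the crepant resolution, to assemble---after the SYZ coordinate change and the identification of the K\"ahler parameters of $\widehat{O_{k,l}}$ with the deformation parameters---into precisely the function defining the smoothing $\widetilde{G_{k,l}}$. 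Because the crepant resolution decomposes, via its toric structure, into conifold-type local charts glued along a chain of exceptional curves, each wall contributes a factor controlled by the conifold case, and the product of these factors is what should produce the two exponents $k$ and $l$ in the defining data of the mirror.

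The correspondence $\widehat{G_{k,l}}\leftrightarrow\widetilde{O_{k,l}}$ is handled by the same machinery with the two families interchanged: I again start from the resolved, toric side $\widehat{G_{k,l}}$, compute the disk-instanton-corrected mirror, and match it with the smoothing $\widetilde{O_{k,l}}$. Here the bookkeeping differs because the toric diagram of $\widehat{G_{k,l}}$ and the shape of its exceptional chain differ from those of $\widehat{O_{k,l}}$, but the structural logic---local conifold contributions glued along walls, with the K\"ahler data of the resolution matched to the complex data of the smoothing---is identical, and both statements specialize to the already-established conifold case when $k=l=1$. Throughout, the removal of the anti-canonical divisors $D_G$ and $D_O$ is what makes the punctured spaces genuinely Calabi--Yau, so that a nowhere-vanishing holomorphic volume form exists and the special Lagrangian condition and disk counting are well defined, after which the SYZ transform applies as in the conifold case.

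The hard part will be the explicit evaluation of the quantum corrections on the resolved side and the proof that they assemble into the correct closed-form defining equation of the mirror. Two features make this delicate. First, the compact exceptional curves forming the $\PP^1$-chains contribute open Gromov--Witten invariants through the fibers that wrap them, and one must show that, summed over all disk classes, these reproduce exactly the multiplicities $k$ and $l$ rather than some genuinely corrected power series; this is precisely where the conifold case is leveraged, by localizing the computation to conifold charts and invoking the established base case on each. Second, one must verify that the resulting wall structure is consistent (the transformations compose correctly around the discriminant) and that the SYZ mirror map identifies the symplectic moduli of the resolution with the complex moduli of the smoothing on the nose. Establishing this dictionary, rather than the formal gluing, is the crux of the argument.
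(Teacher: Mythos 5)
There is a genuine gap in your treatment of the direction $\widehat{G_{k,l}}\to\widetilde{O_{k,l}}$. You propose to equip the resolved generalized conifold with the standard Gross-type toric fibration and run the Chan--Lau--Leung construction, expecting a mirror "cut out by an equation of the form $uv=W$". But the target $\widetilde{O_{k,l}}$ is not a hypersurface of that shape: it is a complete intersection in $\C^4\times\C^\times$ cut out by \emph{two} equations $U_1V_1=f(Z)$, $U_2V_2=g(Z)$. The Gross fibration on the toric threefold $\widehat{G_{k,l}^\sharp}$ has a single wall and produces a single corrected coordinate relation $uv=\sum_{i,j}c_{ij}z^iw^j$ (a deformed generalized conifold again, essentially the Hori--Iqbal--Vafa mirror), and the paper states explicitly that this fibration is \emph{not} compatible with the chosen anti-canonical divisor $D_G=\{z=1\}\cup\{w=1\}$ and does not yield the orbifolded conifold. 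The paper's key step, which your proposal is missing, is the construction of a "doubled" Gross fibration $\pi=(\log|z-1|,\log|w-1|,\pi_{T^1})$ on a compactification $\widehat{G_{k,l}}^*$, special with respect to $\Omega=\der\log x\wedge\der\log(z-1)\wedge\der\log(w-1)$. This fibration has two transverse walls $\{b_1=0\}$ and $\{b_2=0\}$ and four chambers, and it is the pairing of the two boundary divisors $\{z=1\},\{z=\infty\}$ and $\{w=1\},\{w=\infty\}$ that produces the two independent relations $U_1V_1=\prod_i(1+q_1\cdots q_iZ)$ and $U_2V_2=\prod_j(1+q'_1\cdots q'_jcZ)$. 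Without choosing a fibration adapted to the punctures, your construction lands on the wrong mirror, so "the same machinery with the two families interchanged" does not go through.

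Two smaller remarks. First, the paper does not prove the disc counts by localizing to conifold charts and bootstrapping from the $k=l=1$ case; it computes the open Gromov--Witten invariants of moment-map fibers via the open mirror theorem of Chan--Cho--Lau--Tseng and the explicit $A_n$-resolution formulas of Lau--Leung--Wu, and then transports them across the walls by explicit Lagrangian isotopies and involutions $w\mapsto Rw/(w-R)$; your proposed localization would need its own justification at the level of obstruction theory. Second, the paper carries out all four SYZ directions (including the non-toric starting points $\widetilde{G_{k,l}}$ via the Abouzaid--Auroux--Katzarkov conic-fibration construction and $\widetilde{O_{k,l}}$ via a double conic fibration) to exhibit the involutivity of the construction; if you only run SYZ from the two resolved sides you should check which direction the definition of "SYZ mirror symmetric" actually requires for each of the two claims in the statement.
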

According to Theorem \ref{MainTheoremIntro} the mirror duality of the conifold is purely caused by the fact that the conifold can be seen as either a generalized or an orbifolded conifold. 
The mirror duality of $G_{k,l}^\sharp$ and $O_{k,l}^\sharp$ has previously been studied by physicists Aganagic, Karch, Lust and Miemiec in \cite{AKLM}, where they use gauge theory and brane configurations. 
Our work mathematically confirms their gauge-theoretic assertion, and also provides a supportive evidence to Morrison's conjecture that geometric transitions are reversed under mirror symmetry. 

In the present paper, we use the framework introduced by the second author with Chan and Leung \cite{CLL} for defining SYZ mirror pairs. 
Namely, generating functions of open Gromov--Witten invariants of fibers of a Lagrangian fibration were used to construct the complex coordinates of the mirror. 
The essential ingredient is wall-crossing of the generating functions, which was first studied by Auroux \cite{Aur}. 
We can also bypass symplectic geometry and employs the Gross--Siebert program \cite{GS} which uses tropical geometry instead for defining mirror pairs. 
This tropical approach was taken by Castano-Bernard and Matessi \cite{CM} in the study of conifold transitions for compact Calabi--Yau varieties. 
Although our work has some overlap with theirs, the methods and interests are quite different, and we also investigate more general situations. 
In this paper tropical geometry is unnecessary since symplectic geometry can be handled directly.  




One crucial feature of the present work is the involutive property of SYZ mirror symmetry. 
Namely, taking SYZ mirror twice gets back to itself, which we believe is an important point but often overlooked in literatures.
We exhibit this feature by carrying out the SYZ construction for all the four directions in Theorem \ref{MainTheoremIntro}, 
namely from $\widehat{G_{k,l}}$ to $\widetilde{O_{k,l}}$, and from $\widetilde{O_{k,l}}$ back to $\widehat{G_{k,l}}$; 
from $\widetilde{G_{k,l}}$ to $\widehat{O_{k,l}}$, and from $\widehat{O_{k,l}}$ back to $\widetilde{G_{k,l}}$. 
The SYZ construction from $\widehat{G_{k,l}}$ to $\widetilde{O_{k,l}}$ is a bit tricky and we will discuss it in details. 
We will employ the various techniques developed in \cite{Aur, CLL, AAK, Lau}. 

Another interesting feature is the dependence of the choice of a Lagrangian fibration, 
namely a Lagrangian fibration has to be compatible with the choice of an anti-canonical divisor in order to obtain the desired mirror. 
For example, $\widehat{G_{k,l}}$ admits two different Lagrangian fibrations: the Gross fibration and a `doubled' Gross fibration.  
The former is not compatible with the anti-canonical divisor $D_G$, and does not produce the orbifold conifold as its SYZ mirror. 
Choosing appropriate Lagrangian fibrations is a key step in our work. 

Lastly, Theorem \ref{MainTheoremIntro} not only unveils a connection between geometric transitions and SYZ mirror symmetry, 
but also yields many interesting problems and conjectures that naturally extend what is known for the conifolds. 
In fact, based on the local models studied in this paper, the second author recently confirmed Morrison's conjecture for a class of geometric transitions of the Schoen's Calabi--Yau threefolds \cite{Lau2}.  


\subsection*{Structure of Paper}
Section \ref{GenOrbConi} introduces generalized conifolds and orbifolded conifolds and basic properties thereof. 
Section \ref{SYZ} begins with a review on the Lagrangian torus fibrations and the SYZ program.  
Then we prove the main theorem (Theorem \ref{MainTheoremIntro}) by carrying out the SYZ constructions. 
Section \ref{Discussion} discusses global geometric transitions and provides a few examples.

\subsection*{Acknowledgement}
The authors are grateful to Murad Alim, Kwokwai Chan, Yu-Wei Fan, Conan Leung, Shing-Tung Yau for useful discussions and encouragement. 
A.K. was supported by the Harvard CMSA and S.-C.L. was supported by Harvard University when the present work was done. 


\section{Generalized and orbifolded conifolds} \label{GenOrbConi}
In this section, we introduce two natural generalizations of the conifold, namely generalized conifolds and orbifolded conifolds. 
These two singularities possess interesting geometries and were studied by physicists in the context of gauge theory, for instance in \cite{KKV,AKLM,Mie}. 


\subsection{Generalized conifolds $G_{k,l}^\sharp$}
A toric Calabi--Yau threefold can be described by a lattice polytope $\Delta \subset \R^2$ whose vertices lie in the lattice $\Z^2 \subset \R^2$.  Its fan is produced by taking the cone over $\Delta \times \{1\} \subset \R^3$.
A crepant resolution of a toric Calabi--Yau threefold corresponds to a subdivision of $\Delta$ into standard triangles\footnote{
A standard triangle in $\R^2$ is isomorphic to the convex hull of $(0,0),(1,0),(0,1)$ under the $\Z^2 \rtimes \GL(2,\Z)$-transformation.}, which gives a refinement of the fan. 
For instance, the total space of the canonical bundle $K_S$ of a smooth toric surface $S$ is a toric Calabi--Yau threefold.  
In this situation, the surface $S$ is the toric variety $\PP_\Delta$ whose fan polytope is $\Delta$. 

The condition that a toric Calabi--Yau threefold contains no compact $4$-cycles is equivalent to the condition that the polytope $\Delta$ contains no interior lattice points. 
The lattice polygons without interior lattice point are classified, up to the action of $\GL(2,\Z)$, into two types: 
\begin{enumerate}
\item triangle with vertices $(0,0), (2,0), (0,2)$,
\item trapezoid $\Delta_{k,l}$ with vertices $(0,0), (0,1), (k,0), (l,1)$ for $k \ge l \ge 0$ with $(k,l)\ne (0,0)$ (Figure \ref{fig:GeneralizedConifold}(a)).
\begin{figure}[htbp]
 \begin{center} 
  \includegraphics[width=100mm]{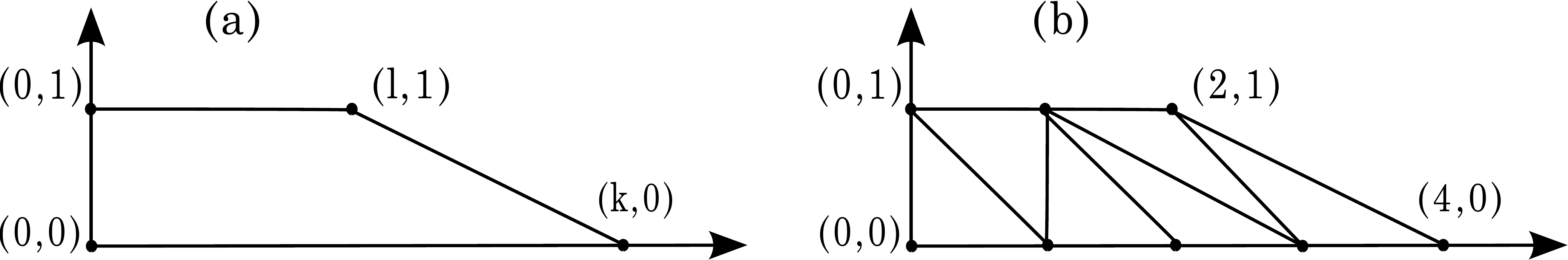}
 \end{center}
 \caption{(a) Trapezoid  $\Delta_{k,l}$, \ (b) Crepant resolution $\widehat{G_{4,2}^\sharp}$}
\label{fig:GeneralizedConifold}
\end{figure}
\end{enumerate}
The former is the quotient of $\C^3$ by the subgroup $(\Z_2)^2\subset \mathrm{SL}(3,\C)$ generated by the two elements $\mathrm{diag}(-1,-1,1)$ and $\mathrm{diag}(1,-1,-1)$. 
In this paper, we are interested in the latter, which corresponds to the generalized conifold $G_{k,l}$ for $k \ge l \ge 1$. 
We do not consider the case $l=0$, where the toric singularity essentially comes from the $A_k$-singularity in 2-dimensions\footnote{
Mirror symmetry of this class of singularities is discussed in \cite[Section 5]{Sze}.}.  
The dual cone of the cone over the trapezoid $\Delta_{k,l}$ is spanned by the vectors
\begin{equation} \label{eq:nu}
\nu_1:=(1,0,0), \ \nu_2:=(0,-1,1), \ \nu_3:=(-1,l-k,k), \ \nu_4:=(0,1,0)
\end{equation}
with relation $\nu_1-k\nu_2+\nu_3-l\nu_4=0$. 
In equation the generalized conifold $G_{k,l}^\sharp$ is given by 
$$
G_{k,l}^\sharp:=\{xy-z^{k}w^{l}=0\} \subset \C^4. 
$$
The coordinates $x,y,z,w$ correspond to the dual lattice points $\nu_1,\nu_3,\nu_2,\nu_4$ respectively.
For $(k,l)\ne (1,1)$, the generalized conifold $G_{k,l}^\sharp$ is a quotient of the conifold (which is given by $(k,l)=(1,1)$) and has a $1$-dimensional singular locus. 
A punctured generalized conifold is $G_{k,l}:=G_{k,l}^\sharp\setminus D_G$, where $D_G=\{z=1\} \cup \{w=1\}$ is an anti-canonical divisor of $G_{k,l}^\sharp$.  
A crepant resolution $\widehat{G_{k,l}}$ of $G_{k,l}$ is called a resolved generalized conifold. 
We observe that $\widehat{G_{k,l}}=\widehat{G_{k,l}^\sharp}\setminus D_{\widehat{G}}$, where $D_{\widehat{G}}$ is an anti-canonical divisor of $\widehat{G_{k,l}^\sharp}$, 
and it uniquely corresponds to a maximal triangulation of the trapezoid $\Delta_{k,l}$ (Figure \ref{fig:GeneralizedConifold}(b)). 
The resolved generalized conifold $\widehat{G_{k,l}}$ is endowed with a natural symplectic structure as an open subset of a smooth toric variety $\widehat{G_{k,l}^\sharp}$.  

\begin{Prop} \label{triangulation}
There are $\binom{k+l}{k}$ distinct crepant resolutions of $G_{k,l}$ (or equivalently $G_{k,l}^\sharp$).  
\end{Prop}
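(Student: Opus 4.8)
The plan is to reduce the statement to a purely combinatorial count and then to set up a bijection with monotone lattice paths. By the correspondence recalled above --- crepant resolutions of a toric Calabi--Yau threefold are in bijection with subdivisions of its defining polygon into standard triangles, and for $G_{k,l}^\sharp$ this polygon is the trapezoid $\Delta_{k,l}$ --- it suffices to count the maximal triangulations of $\Delta_{k,l}$, those in which every piece is a standard (unimodular) triangle. Since $\Delta_{k,l}$ has vertices $(0,0),(0,1),(k,0),(l,1)$ and contains no interior lattice point, all of its lattice points lie on the two horizontal lines $y=0$ and $y=1$: the bottom points $B_i:=(i,0)$ for $0\le i\le k$ and the top points $T_j:=(j,1)$ for $0\le j\le l$. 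A maximal triangulation uses precisely these $k+l+2$ points as its vertices.

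First I would analyze the shape of an individual triangle in such a triangulation. No three of the lattice points are collinear except those sharing a horizontal line, so each triangle must meet both rows, and therefore has exactly two vertices on one row and one on the other. For the triangle to be unimodular (area $1/2$) the two vertices lying on the same row must be lattice-adjacent; otherwise the triangle would have area larger than $1/2$ and contain a further lattice point, contradicting maximality. Hence every triangle has the form $B_iB_{i+1}T_j$ or $B_iT_jT_{j+1}$.

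Then I would read off the bijection with lattice paths. The edges of the triangulation joining the two rows --- the transversal edges $B_iT_j$ --- are linearly ordered from the left edge $B_0T_0$ to the slanted right edge $B_kT_l$, and between two consecutive transversal edges sits exactly one triangle, which advances either the bottom index $i$ or the top index $j$ by one. Starting from $(i,j)=(0,0)$ and ending at $(k,l)$, a triangulation thus determines, and is determined by, a sequence of $k$ bottom-steps and $l$ top-steps, i.e.\ a monotone lattice path from $(0,0)$ to $(k,l)$. The number of such paths is $\binom{k+l}{k}$, which gives the claim.

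The main point requiring care is the combinatorial lemma that every maximal triangulation genuinely arises from such an advancing sequence, and that the assignment is a bijection --- in particular that the transversal edges are non-crossing and totally ordered, so the ``one triangle per step'' description is well-defined and reversible. The convexity of $\Delta_{k,l}$, together with the fact that its only non-horizontal boundary edges are the left edge $B_0T_0$ and the right edge $B_kT_l$, is exactly what guarantees this ordering, so I would isolate and prove that ordering statement first; everything else is then routine bookkeeping.
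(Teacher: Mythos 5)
Your proposal is correct. Both you and the paper begin from the same reduction --- crepant resolutions of $G_{k,l}^\sharp$ correspond bijectively to maximal (unimodular) triangulations of the trapezoid $\Delta_{k,l}$ --- but you then diverge: the paper disposes of the count in one line by induction on $k+l$ using Pascal's identity $\binom{k+l+1}{k+1}=\binom{k+l}{k}+\binom{k+l}{k+1}$ (implicitly, the rightmost triangle against the slanted edge is either of ``bottom'' or ``top'' type, which splits the count into the two terms), whereas you construct an explicit bijection with monotone lattice paths from $(0,0)$ to $(k,l)$. The two arguments have the same combinatorial core, and your bijection is essentially the closed-form resolution of the paper's recursion. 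What your route buys is an explicit, constructive parametrization of the resolutions (each one is literally a shuffle of $k$ bottom-steps and $l$ top-steps), which also makes transparent \emph{why} every triangle is of the form $B_iB_{i+1}T_j$ or $B_iT_jT_{j+1}$; what the paper's route buys is brevity. You are right that the only point needing care in your version is the total ordering of the transversal edges $B_iT_j$, which follows from convexity of $\Delta_{k,l}$ and the fact that each unimodular triangle is bounded by exactly two such edges, consecutive in that order; once that is in place the bijection is immediate.
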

\begin{proof}
There is a bijection between the crepant resolutions of $G_{k,l}$ and the maximal triangulations of $\Delta_{k,l}$. 
The assertion easily follows by induction with the relation $\binom{k+l+1}{k+1}=\binom{k+l}{k}+\binom{k+l}{k+1}$. 
\end{proof}

We may also smooth out the punctured generalized conifold $G_{k,l}$ by deforming the equation. 
The deformed generalized conifold $\widetilde{G_{k,l}}$ is defined as
$$
\widetilde{G_{k,l}}:=\Big\{ (x,y,z,w) \in \C^2 \times (\C \setminus\{1\})^2 \ \big| \ xy- \sum_{i=0}^{k}\sum_{j=0}^{l}a_{i,j}z^iw^j=0\Big\}
$$
for generic $a_{i,j} \in \C$. 
The symplectic structure of $\widetilde{G_{k,l}}$ is given by the restriction of the standard symplectic structure on $\C^2 \times (\C \setminus\{1\})^2$. 
We observe that the complex deformation space has dimension $(k+1)(l+1)-3$ because three of the parameters can be eliminated by rescaling $z,w$ and rescaling the whole equation.  
On the other hand, the K\"ahler deformation space has dimension $(k+1)+(l+1)-3$, the number of linearly dependent lattice vectors in the polytope. 
It is the number of the exceptional $\PP^1$s' and a K\"ahler form is parametrized by the area of these. 


\subsection{Orbifolded conifolds $O_{k,l}^\sharp$}
Let $X^\sharp$ be the conifold $\{xy-zw=0\} \subset \C^4$. 
For $k \ge l \ge 1$, the orbifolded conifold $O_{k,l}^\sharp$ is the quotient of the conifold $X^\sharp$ by the abelian group $\Z_k \times \Z_l$, where $\Z_k$ and $\Z_l$ respectively act by
$$
(x,y,z,w) \mapsto (\zeta_kx,\zeta_k^{-1}y,z,w), \textrm{ and } (x,y,z,w) \mapsto (x,y,\zeta_lz,\zeta_l^{-1}w)
$$
where $\zeta_k$, $\zeta_l$ are primitive $k$-th and $l$-th roots of unity respectively (assume $\mathrm{gcd}(k,l)=1$ for simplicity \cite{AKLM}). 
Alternatively the orbifolded conifold $O_{k,l}^\sharp$ is realized as a hypersurface in $\C^5$: 
$$
O_{k,l}^\sharp=\{u_1v_1=z^k, \ u_2v_2=z^l \} \subset \C^5. 
$$ 
The orbifolded conifold $O_{k,l}^\sharp$ is an example of a toric Calabi--Yau threefold 
and the corresponding polytope is given by the rectangle $\Box_{k,l}$ with the vertices $(0,0), (k,0), (0,l), (k,l)$ (Figure \ref{fig:OrbifoldedConifold}(a)). 
\begin{figure}[htbp]
 \begin{center} 
  \includegraphics[width=90mm]{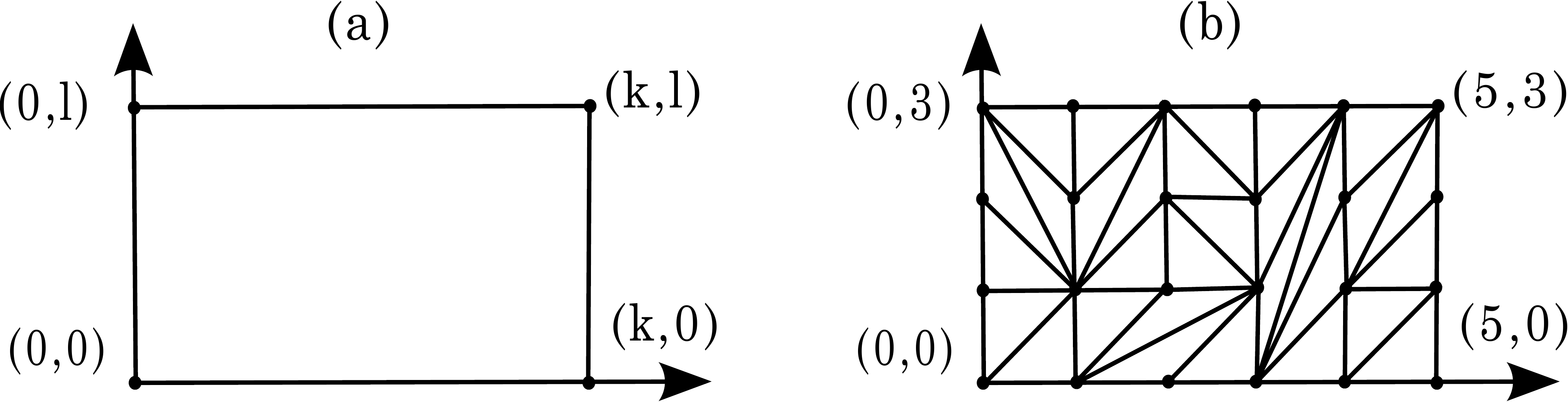}
 \end{center}
 \caption{(a) Rectangle $\Box_{k,l}$, \ (b) Crepant resolution $\widehat{O_{5,3}^\sharp}$}
\label{fig:OrbifoldedConifold}
\end{figure}

The dual cone of the cone over the rectangle $\Box_{k,l}$ is spanned by the following vectors 
$$
v_1:=(1,0,0), \ v_2:=(0,-1,l), \ v_3:=(-1,0,k), \ v_4:=(0,1,0) 
$$
with relation $lv_1-kv_2+lv_3-kv_4=0$. \\

A punctured orbifolded conifold is $O_{k,l}:=O_{k,l}^\sharp\setminus D_{O}$, where $D_{O}=\{z=1\}$ is a smooth anti-canonical divisor of $O_{k,l}^\sharp$.  
Then a resolved orbifolded conifold $\widehat{O_{k,l}}$ is defined to be a crepant resolution of $O_{k,l}$. 
As before, $\widehat{O_{k,l}}=\widehat{O_{k,l}^\sharp}\setminus D_{\widehat{O}}$, 
where $D_{\widehat{O}}$ is a smooth anti-canonical divisor of the toric crepant resolution $\widehat{O_{k,l}^\sharp}$,
and it corresponds to a maximal triangulation of the trapezoid $\Box_{k,l}$ (Figure \ref{fig:OrbifoldedConifold}(b)). 
It has a canonical symplectic structure as an open subset of a smooth toric variety $\widehat{O_{k,l}^\sharp}$.   
In contrast to Proposition \ref{triangulation}, it is a famous open problem to find the number of the crepant resolutions of the orbifolded conifold $O_{k,l}^\sharp$ \cite{KZ}. 
The punctured orbifolded conifold $O_{k,l}$ can also be smoothed out by deforming the defining equations. 
Thus the deformed orbifolded conifold $\widetilde{O_{k,l}}$ is given by 
$$
\widetilde{O_{k,l}}:=\Big\{(u_1,v_1,u_2,v_2,z) \in \C^4 \times (\C \setminus \{1\}) \ \big| \ u_1v_1=\sum_{i=0}^k a_iz^i, \ u_2v_2=\sum_{j=0}^l b_jz^j\Big\}
$$
for generic coefficients $a_i,b_j \in \C$. 
The symplectic structure of $\widetilde{O_{k,l}}$ is the restriction of the standard symplectic structure on $\C^4 \times (\C \setminus \{1\})$.  
The complex deformation space of $\widetilde{O_{k,l}}$ has dimension $(k+1)+(l+1)-3$, while the K\"ahler deformation space has dimension $(k+1)(l+1)-3$. 
Therefore the naive dimension counting is compatible with our claim that these two classes of singularities are mirror symmetric. 
We will formulate this mirror duality in a rigorous manner by using SYZ mirror symmetry in the next section.


\section{SYZ mirror construction} \label{SYZ}

The Strominger--Yau--Zaslow (SYZ) conjecture \cite{SYZ} provides a foundational geometric understanding of mirror symmetry. 
It asserts that, for a mirror pair of Calabi--Yau manifolds $X$ and $X^\vee$, 
there exist Lagrangian torus fibrations $\pi:X\to B$ and $\pi^\vee:X^\vee\to B$ which are fiberwise-dual to each other. 
In particular, it suggests an intrinsic construction of the mirror $X^\vee$ by fiberwise dualizing a Lagrangian torus fibration on $X$. 
This is motivated by \textit{T-duality} studied by string theorists. 

The SYZ program has been carried out successfully in the semi-flat case \cite{Leu} in which the discriminant locus of the fibrations is empty. 
When singular fibers are present, quantum corrections by open Gromov--Witten invariants of the fibers are necessary, and they exhibit wall-crossing phenomenon.  Wall-crossing of open Gromov--Witten invariants was first studied by Auroux \cite{Aur}.  Later on \cite{CLL} gave an SYZ construction of mirrors with quantum corrections, which will be used in this paper.
In algebro-geometric context, the Gross--Siebert program \cite{GS} gives a reformulation of the SYZ program using tropical geometry, which provides powerful techniques to compute wall-crossing and scattering order-by-order.  In this paper we will use the symplectic rather than the tropical approach.

We will first give a quick review of the setting of \cite{CLL} for SYZ with quantum corrections in Section \ref{Preliminary}.
We say that $X$ is \emph{SYZ mirror symmetric} to $Y$ if $Y$ is produced from $X$ as a SYZ mirror manifold by this SYZ mirror construction.  
The later parts of this section prove the following main theorem.
 
\begin{Thm} \label{MainTheorem}
The punctured generalized conifold $G_{k,l}$ is mirror symmetric to the punctured orbifolded conifold $O_{k,l}$ in the sense that 
the deformed generalized conifold $\widetilde{G_{k,l}}$ is SYZ mirror symmetric to the resolved orbifolded conifold $\widehat{O_{k,l}}$, 
and the resolved generalized conifold $\widehat{G_{k,l}}$ is SYZ mirror symmetric to the deformed orbifolded conifold $\widetilde{O_{k,l}}$:   
$$
\xymatrix{
\widetilde{G_{k,l}}\ar@{<->}[d]_{SYZ} &\ar@{~>}[l] G_{k,l} \ar@{<->}[d]^{MS} & \ar@{.>}[l]  \widehat{G_{k,l}} \ar@{<->}[d]^{SYZ}  \\
\widehat{O_{k,l}} \ar@{.>}[r] & O_{k,l}\ar@{~>}[r] & \widetilde{O_{k,l}}.
}
$$
\end{Thm}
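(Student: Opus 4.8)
The plan is to establish each of the two vertical SYZ equivalences in the diagram by the same three-step recipe of \cite{CLL}: equip the source space with a Lagrangian torus fibration whose discriminant is compatible with the chosen anti-canonical divisor, compute the generating functions of open Gromov--Witten invariants of the fibers together with their wall-crossing, and then assemble these generating functions into the complex coordinates of the mirror. Because SYZ mirror symmetry should be involutive, I would run this construction in all four directions indicated by the double-headed arrows, so that taking the mirror twice returns the original space. A guiding consistency check is the dimension count recorded in Section \ref{GenOrbConi}: the complex deformation space of $\widetilde{G_{k,l}}$, of dimension $(k+1)(l+1)-3$, must match the K\"ahler deformation space of $\widehat{O_{k,l}}$, and symmetrically the K\"ahler deformation space of $\widehat{G_{k,l}}$, of dimension $(k+1)+(l+1)-3$, must match the complex deformation space of $\widetilde{O_{k,l}}$. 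At the combinatorial level the whole statement is the exchange of the triangulated trapezoid $\Delta_{k,l}$ with the rectangle $\Box_{k,l}$, and the geometric content is that this exchange is realized fiberwise by T-duality once quantum corrections are included.

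I would treat the left-hand equivalence $\widetilde{G_{k,l}}\leftrightarrow\widehat{O_{k,l}}$ first. For the direction $\widetilde{G_{k,l}}\to\widehat{O_{k,l}}$, whose source is a smoothing, I would use an Auroux-type fibration: viewing $\widetilde{G_{k,l}}$ as the family of conics $xy=\sum_{i,j}a_{i,j}z^iw^j$ over the $(z,w)$-base, the fibration combines the Hamiltonian circle action $(x,y)\mapsto(e^{\ci\theta}x,e^{-\ci\theta}y)$ with the level sets of $|xy|$ and of $\log|z|,\log|w|$ on the complement of $D_G$. The discriminant lies over the zero set of the deforming polynomial, the only Maslov-index-two classes are the basic boundary discs together with those created by crossing this locus, and I would show that the corresponding wall-crossing generating functions assemble into the crepant resolution $\widehat{O_{k,l}}$, with the complex parameters $a_{i,j}$ passing to the K\"ahler areas of the exceptional curves. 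For the reverse direction $\widehat{O_{k,l}}\to\widetilde{G_{k,l}}$ the source is the toric crepant resolution minus the \emph{smooth} divisor $D_{\widehat{O}}$, so the ordinary toric Gross fibration is already compatible; its walls are indexed by the exceptional curves of the triangulation of $\Box_{k,l}$, and summing the disc contributions should reproduce the deformed equation of $\widetilde{G_{k,l}}$.

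The main obstacle is the right-hand equivalence, and in particular the direction from the resolved generalized conifold $\widehat{G_{k,l}}$ to the deformed orbifolded conifold $\widetilde{O_{k,l}}$. The difficulty is that the naive Gross fibration on $\widehat{G_{k,l}^\sharp}$ is adapted to a single smooth divisor, whereas the anti-canonical divisor $D_G$ is \emph{normal-crossing} with two components; consequently the naive fibration is incompatible with $D_G$ and cannot produce the two separate conic-bundle equations $u_1v_1=\sum_i a_iz^i$ and $u_2v_2=\sum_j b_jz^j$ that define $\widetilde{O_{k,l}}$. To remedy this I would construct the \emph{doubled Gross fibration}, which introduces two independent circle-action and moment-map pairs so as to see both components of $D_G$ simultaneously; verifying that this is genuinely a Lagrangian fibration with the expected singular fibers, and organizing its wall-crossing, is the step I expect to require the most care. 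Here the two parallel edges of $\Delta_{k,l}$, of lattice lengths $k$ and $l$, together with the maximal triangulation produce two cascades of walls; I would compute the scattering of their generating functions and show that the open Gromov--Witten invariants assemble into one polynomial of degree $k$ and one of degree $l$ in the single variable $z$, which is precisely the data of $\widetilde{O_{k,l}}$.

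Finally I would close the loop with the fourth direction $\widetilde{O_{k,l}}\to\widehat{G_{k,l}}$, using the Auroux-type conic fibration adapted to the smooth divisor $D_O$ and checking that the mirror coordinates recover the toric resolution $\widehat{G_{k,l}}$; this confirms the involutivity claimed by the double-headed arrows. The technical inputs throughout are the disc-counting and wall-crossing formulas of \cite{Aur,CLL,AAK,Lau}, and once the doubled Gross fibration is in place the computations are of the same nature as in the conifold case $k=l=1$ of the theorem quoted in the introduction, with the combinatorial multiplicities encoded in the triangulations of $\Delta_{k,l}$ and $\Box_{k,l}$.
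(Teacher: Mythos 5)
Your plan follows essentially the same route as the paper: the same four SYZ constructions using the techniques of Auroux, CLL, AAK and Lau, the same consistency checks via the deformation-space dimension counts, and the same identification of $\widehat{G_{k,l}}\to\widetilde{O_{k,l}}$ as the delicate case requiring a fibration adapted to both components of the normal-crossing divisor $D_G$. One small correction: the paper's doubled Gross fibration $(\log|z-1|,\log|w-1|,\pi_{T^1})$ doubles the log-distance coordinates to the two divisor components while keeping only a single Hamiltonian $S^1$ moment map, rather than doubling the circle-action/moment-map pairs as you describe.
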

This mathematically confirms the gauge-theoretic assertion of the string theorists Aganagic--Karch--L\"ust--Miemiec \cite{AKLM} 
and also provides a supportive evidence to Morrison's conjecture \cite{Mor} from the view point of SYZ mirror symmetry. 





\subsection{SYZ construction with quantum corrections} \label{Preliminary}
In this subsection we review the SYZ construction with quantum corrections given in \cite{CLL}.  We add a clarification that we only use transversal disc classes (Definition \ref{def:transversal}) in the definition of the mirror space.

Let $\pi: X \to B$ be a proper Lagrangian torus fibration of a K\"ahler manifold $(X,\omega)$  
such that the base $B$ is a compact manifold with corners, and the preimage of each codimension-one facet of $B$ is a smooth irreducible divisor denoted as $D_i$ for $1 \le i \le m$.
We assume that the regular Lagrangian fibers of $\pi$ are special with respect to a nowhere-vanishing meromorphic volume form $\Omega$ on $X$ whose pole divisor is the boundary divisor $D:=\sum_{i=1}^mD_i$ (and hence $D$ is an anti-canonical divisor).  We denote by $B_0 \subset B$ the complement of the discriminant locus of $\pi$, and we assume that $B_0$ is connected\footnote{When the discriminant locus has codimension-two, $B_0$ is automatically connected.  Although the Lagrangian fibrations of $\widetilde{G_{k,l}}$ we study have codimension-one discriminant loci, $B_0$ is still connected.}.  We always denote by $F_b$ a fiber of $\pi$ at $b \in B_0$.

\begin{Lem}[Maslov index of disc classes {\cite[Lemma 3.1]{Aur}}] \label{MaslovIndex} 
For a disc class $\beta \in \pi_2(X,F_b)$ where $b \in B_0$, the Maslov index of $\beta$ is $\mu(\beta)=2D\cdot \beta$. 
\end{Lem}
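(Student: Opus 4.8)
The plan is to compute the Maslov index directly from the meromorphic volume form $\Omega$, exploiting that each fiber $F_b$ (for $b\in B_0$) lies entirely in $X\setminus D$ and is special Lagrangian there. Fix a disc $u:(D^2,\partial D^2)\to (X,F_b)$ representing $\beta$. Since $D^2$ is contractible, the pullback $u^*K_X$ is trivial, so I would first choose a nowhere-vanishing holomorphic frame $\tau$ of $u^*K_X$ over the entire disc, including across the points where $u$ meets $D$ (locally $\tau=dz_1\wedge\cdots\wedge dz_n$ in coordinates where $D=\{z_1=0\}$). Writing $\Omega=h\,\tau$, the hypothesis that $D$ is the pole divisor of $\Omega$ means $h$ is a meromorphic function near $u(D^2)$ with a simple pole along each $D_i$ and no zeros, so that inside the disc its only zeros or poles are order-one poles located exactly at the intersection points of $u(D^2)$ with $D$.

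Next I would translate the special Lagrangian condition into a statement about phases. Relative to a nonvanishing frame such as $\tau$, the Maslov index of $\beta$ equals the winding number around $\partial D^2$ of the \emph{squared} phase of the Lagrangian tangent frame of $F_b$; this is the standard description of the generator of $H^1$ of the Lagrangian Grassmannian via $\det^2$. Because $F_b$ is special with respect to $\Omega$, the phase of $\Omega$ evaluated on an oriented frame of $TF_b$ is constant along $\partial D^2$. Since $\Omega=h\,\tau$, the phase of that frame measured against $\tau$ therefore differs from this constant only by $-\arg h$, whence $\mu(\beta)=-2\cdot(\text{winding of }\arg h\text{ along }\partial D^2)$.

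Finally I would evaluate this winding by the argument principle: the winding of $\arg h$ around $\partial D^2$ counts zeros minus poles of $h$ inside the disc, which by the first step equals $-(D\cdot\beta)$, since $h$ has precisely the order-one poles produced by the transverse intersections of the disc with $D$, whose signed multiplicities sum to $D\cdot\beta$. Substituting gives $\mu(\beta)=2\,D\cdot\beta$, as claimed.

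The step I expect to be the crux is the second one: correctly identifying the Maslov index with twice the winding of the phase relative to $\tau$, and in particular pinning down the factor of $2$ and the sign. The special Lagrangian hypothesis is exactly what collapses all contribution away from $D$ to zero, so that the entire index localizes at the intersections with the anti-canonical divisor; making this localization rigorous, and checking that intersection multiplicities enter with the correct sign (using that the disc meets $D$ positively), is where care is needed. Everything else is a routine application of the argument principle.
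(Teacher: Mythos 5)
The paper gives no proof of this lemma at all --- it simply cites Auroux's Lemma~3.1 --- and your argument is precisely the standard proof of that result: trivialize $u^*K_X$, use the special Lagrangian condition to make the boundary phase of $\Omega$ constant so that the entire winding of the squared phase relative to $\tau$ is carried by $-2\arg h$, and then count the poles of $h$ against $D\cdot\beta$. The only point to phrase with care is that a general class $\beta\in\pi_2(X,F_b)$ need not admit a holomorphic representative, so the ``argument principle'' step should be read as a topological winding-number (degree) computation for a smooth representative transverse to $D$ --- which is in any case exactly what produces the \emph{signed} intersection number $D\cdot\beta$, with no appeal to positivity of intersection needed.
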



\begin{Def}[Wall \cite{CLL}] \label{def:wall}
The wall $H$ of a Lagrangian fibrartion $\pi:X\rightarrow B$ is the set of point $b \in B_0$ 
such that $F_b:=\pi^{-1}(b)$ bounds non-constant holomorphic disks with Maslov index $0$.  
\end{Def}

The complement of $H \subset B_0$ consists of several connected components, which we call chambers. 
Over different chambers the Lagrangian fibers behave differently in a Floer-theoretic sense. 
Away from the wall $H$, the one-point open Gromov--Witten invariants are well-defined using the machinery of Fukaya--Oh--Ohta--Ono \cite{FOOO}. 

\begin{Def}[Open Gromov--Witten invariants {\cite{FOOO}}] \label{def:oGW}
For $b \in B_0 \setminus H$ and $\beta \in \pi_2(X,F_b)$, 
let  $\mathfrak{M}_1(\beta)$ be the moduli space of stable discs with one boundary marked point representing $\beta$, and $[\mathfrak{M}_1(\beta)]^{\mathrm{virt}}$ be the virtual fundamental class of $\mathfrak{M}_1(\beta)$.
The open Gromov--Witten invariant associated to $\beta$ is $n_\beta:=\int_{[\mathfrak{M}_1(\beta)]^{\mathrm{virt}}}\ev^*[\pt]$, 
where $\ev:\mathfrak{M}_1(\beta) \rightarrow F_b$ is the evaluation map at the boundary marked point and $[\pt]$ is the Poincar\'e dual of the point class of $F_b$.
\end{Def}
We will restrict to disc classes which are transversal to the boundary divisor $D$ when we construct the mirror space (while for the mirror superpotential we need to consider all disc classes).

\begin{Def}[Transversal disc class] \label{def:transversal}
A disc class $\beta \in \pi_2(X,F_b)$ for $b \in B_0$ is said to be transversal to the boundary divisor $D$, which is denoted as $\beta \pitchfork D$, if all stable discs in $\mathfrak{M}_1(\beta)$ intersect transversely with the boundary divisor $D$.
\end{Def}

Due to dimension reason, the open Gromov--Witten invariant $n_\beta$ is non-zero only when the Maslov index $\mu(\beta)=2$. 
When $\beta$ is transversal to $D$ or when $X$ is semi-Fano, namely $c_1(\alpha) = D \cdot \alpha \geq 0$ for all holomorphic sphere classes $\alpha$, the number $n_\beta$ is invariant under small deformation of complex structure and under Lagrangian isotopy in which all Lagrangian submanifolds in the isotopy do not intersect $D$ nor bound non-constant holomorphic disc with Maslov index $\mu(\beta)<2$. 


The paper \cite{CLL} proposed a procedure which realizes the SYZ program based on symplectic geometry as follows:

\begin{enumerate}
\item
Construct the semi-flat mirror $X^\vee_0$ of $X_0:=\pi^{-1}(B_0)$ as the space of pairs $(b,\nabla)$ 
where $b \in B_0$ and $\nabla$ is a flat $\U(1)$-connection on the trivial complex line bundle over $F_b$ up to gauge. 
There is a natural map $\pi^\vee:X^\vee_0\rightarrow B_0$ given by forgetting the second coordinate. 
The semi-flat mirror  $X^\vee_0$ has a canonical complex structure \cite{Leu} 
and the functions $\mathrm{e}^{-\int_{\beta}\omega}\Hol_{\nabla}(\partial \beta)$ on $X^\vee_0$ for disc classes $\beta \in \pi_2(X,F_b)$ are called semi-flat complex coordinates. 
Here $\mathrm{Hol}_{\nabla} (\partial \beta)$ denotes the holonomy of the flat $\U(1)$-connection $\nabla$ along the path $\partial \beta \in \pi_1(F_b)$. 

\item Define the generating functions of open Gromov--Witten invariants for $1 \le i \le m$
\begin{equation}
Z_i(b,\nabla) := \sum_{\substack{\beta \in \pi_2(X,F_b) \\ \beta \cdot D_i = 1, \beta\pitchfork D}} n_\beta \mathrm{e}^{-\int_{\beta}\omega}\Hol_{\nabla}(\partial \beta), 
\label{eq:gen}
\end{equation}
for $(b, \nabla) \in (\pi^\vee)^{-1}(B_0\setminus H)$, which serve as quantum corrected complex coordinates. 
The function $Z_i$ can be written in terms of the semi-flat complex coordinates, and hence they generate a subring $\C[Z_1, \ldots, Z_m]$ in the function ring\footnote{In general we need to use the Novikov ring instead of $\C$ since $Z_i$ could be a formal Laurent series.  In the cases that we study later, $Z_i$ are Laurent polynomials whose coefficients are convergent, and hence the Novikov ring is not necessary.} over $(\pi^\vee)^{-1}(B_0\setminus H)$.

\item
Define the SYZ mirror of $X$ with respect to the Lagrangian torus fibration $\pi$ to be the pair $(X^\vee,W)$ where 
$X^\vee:=\Spec \left(\C[Z_1,\dots, Z_m] \right)$ and 
$$W = \sum_{\substack{\beta \in \pi_2(X,F_b)}} n_\beta \mathrm{e}^{-\int_{\beta}\omega}\Hol_{\nabla}(\partial \beta). $$
\end{enumerate}

Moreover, $X^\vee$ is defined as the SYZ mirror of a non-compact Calabi--Yau manifold $Y$ if it is obtained from the above construction for a compactification of $Y$. 
It is expected that different compactifications would result in the same SYZ mirror. 
In this paper we fix one compactification as an initial data for the SYZ construction.





In the following sections we will apply the above recipe to the generalized conifolds and orbifolded conifolds.  
We will carry out in detail the SYZ construction from $\widehat{G_{k,l}}$ to $\widetilde{O_{k,l}}$ which is the most interesting case (Section \ref{sec:GtoO}), in which we construct a doubled version of the Gross fibration \cite{Gol,Gro} and compute the open Gromov--Witten invariants.  The other cases, namely the SYZ constructions from $\widetilde{O_{k,l}}$ to $\widehat{G_{k,l}}$, from $\widehat{O_{k,l}}$ to $\widetilde{G_{k,l}}$, and from $\widehat{G_{k,l}}$ to $\widetilde{O_{k,l}}$, are essentially obtained by applying the techniques developed in \cite{Lau,CLL,AAK}, and so we will be brief. 
In fact $G_{k,l}^\sharp$ and $O_{k,l}^\sharp$ are useful testing grounds for the SYZ program and we shall illustrate how these various important ideas fit together by examining them. 



\subsection{SYZ from $\widehat{G_{k,l}}$ to $\widetilde{O_{k,l}}$} \label{sec:GtoO}
We first construct the SYZ mirror of the resolved generalized conifold $\widehat{G_{k,l}}$. 
While the resolved generalized conifold $\widehat{G_{k,l}^\sharp}$ is a toric Calabi--Yau threefold,  
we will {\it not} use the Gross fibration \cite{Gol,Gro} because it is not compatible with the chosen anti-canonical divisor $D_G$ and hence do not produce the resolved orbifolded conifold $\widetilde{O_{k,l}}$ as the mirror.   
We will instead use a {\it doubled} version of the Gross fibration explained below.

The fan of $\widehat{G_{k,l}^\sharp}$ is given by the cone over a triangulation depicted in Figure \ref{fig:GeneralizedConifold}(b).  
We label the divisors corresponding to the rays generated by $(i,0,1)$ to be $D_i$ for $0 \le i \le k$, and those corresponding to the rays generated by $(j,1,1)$ to be $D_{k+1+j}$ for $0 \le j \le l$.  
Each divisor $D_i$ corresponds to a basic disc class $\beta_i \in \pi_2(X,L)$ where $L$ denotes a moment-map fiber \cite{CLL}. 

Let us first compactify the resolved generalized conifold $\widehat{G_{k,l}}$ as follows. 
We add the rays generated by $(0,0,-1)$, $(0,-1,-1)$, $(1,0,0)$ and $(-1,0,0)$, and the corresponding cones, to the fan of $\widehat{G_{k,l}^\sharp}$. 
Let us denote the resulting toric variety by $\widehat{G_{k,l}}^*$ (and we fix a toric K\"ahler form on it). 
Let $\cD_{z=\infty}$, $\cD_{w=\infty}$,$\cD_{\xi=0}$, and $\cD_{\xi=\infty}$ be the corresponding additional toric prime divisors and $\beta_{z=\infty}$, $\beta_{w=\infty}$, $\beta_{\xi=0}$ and $\beta_{\xi=0}$ be the additional basic disc classes respectively.  

Note that $\widehat{G_{k,l}}^*$ is in general not semi-Fano since there could be holomorphic spheres with the Chern class $c_1 < 0$ supported in the newly added divisors (or in other words the fan polytope of $\widehat{G_{k,l}}^*$ may contain a interior lattice point).  However since we only need to consider transversal disc classes (Definition \ref{def:transversal}) in the definition of $X^\vee$, these holomorphic spheres do not enter into our constructions.

We now construct a special Lagrangian fibration and apply the SYZ construction on $\widehat{G_{k,l}}^*$. 
Consider the Hamiltonian $T^1$-action on $\widehat{G_{k,l}}^*$ corresponding to the vector $(1,0,0)$ in the vector space which supports the fan.  Denote by $\pi_{T^1}:\widehat{G_{k,l}}^* \to \R$ the moment map associated to this Hamiltonian $T^1$-action, whose image is a closed interval $I$.  Let $\theta$ be the angular coordinate corresponding to the Hamiltonian $T^1$-action. 
Recall that $x,y,z,w$ are toric functions corresponding to the lattice points $\nu_1, \nu_3,\nu_2,\nu_4$ (in the vector space which supports the moment polytope) defined by Equation \eqref{eq:nu} respectively. 
Note that $z=0$ on the toric divisors $D_0, \ldots, D_k$, while $w=0$ on the toric divisors $D_{k+1},\ldots,D_{k+l+1}$.  Moreover the pole divisors of $z$ and $w$ are $\cD_{z=\infty}$ and $\cD_{w=\infty}$ respectively.

The toric K\"ahler form on $\widehat{G_{k,l}}^*$ can be written as
$$
\omega := \der \pi_{T^1} \wedge \der \theta + \frac{\sqrt{-1}}{c_1(1+|z|^2)^2} \der z \wedge \der\bar{z} + \frac{\sqrt{-1}}{c_2(1+|w|^2)^2} \der w \wedge \der \bar{w}
$$
for some $c_1,c_2 \in \R_{>0}$.
We define a $T^3$-fibration $\pi:\widehat{G_{k,l}}^*\rightarrow B:=[-\infty,\infty]^2 \times I$ by 
$$
\pi(x,y,z,w) = (b_1,b_2,b_3) = (\log|z-1|, \log|w-1|,\pi_{T^1}(x,y,z,w)).
$$
We also define a nowhere-vanishing meromorphic volume form by 
$$
\Omega : = \der \log x \wedge \der \log(z-1) \wedge \der \log(w-1) .
$$  
The pole divisor $D$ of $\Omega$ is given by the union 
$$
D = \cD_{z=1} \cup \cD_{w=1} \cup \cD_{z=\infty} \cup \cD_{w=\infty} \cup \cD_{\xi=0} \cup \cD_{\xi=\infty}
$$
whose image under $\pi$ is the boundary of $B$ ($\cD_{z=1}$ and $\cD_{w=1}$ denotes the divisors $\{z=1\}$ and $\{w=1\}$ respectively).  
Using the method of symplectic reductions \cite{Gol}, we obtain the following. 
\begin{Prop}
The $T^3$-fibration $\pi$ defined above is a special Lagrangian fibration with respect to $\omega$ and $\Omega$.
\end{Prop}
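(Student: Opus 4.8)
The plan is to use Goldstein's symplectic reduction method \cite{Gol}, which reduces the special Lagrangian condition for the $T^3$-fibration upstairs to an elementary special Lagrangian condition for a $T^2$-fibration on a complex surface. First I would record the geometry of a generic fiber. Since $z$ and $w$ are invariant under the Hamiltonian $T^1$-action (the corresponding lattice points $\nu_2,\nu_4$ of \eqref{eq:nu} pair to zero with $(1,0,0)$, i.e. $\langle \nu_2,(1,0,0)\rangle = \langle \nu_4,(1,0,0)\rangle = 0$), a regular fiber $F_b = \pi^{-1}(b_1,b_2,b_3)$ is the $3$-torus cut out by $|z-1| = \mathrm{e}^{b_1}$, $|w-1| = \mathrm{e}^{b_2}$, $\pi_{T^1} = b_3$, with free angular coordinates $\theta$, $\phi := \arg(z-1)$ and $\psi := \arg(w-1)$. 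Moreover the $T^1$-action multiplies $x$ by a phase, so it fixes $\der\log x$ and therefore preserves $\Omega$; this is exactly what is needed in order to reduce by $\mu := \pi_{T^1}$.

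Setting up the reduction, the reduced space $\mu^{-1}(b_3)/T^1$ is a complex surface coordinatized by $z,w$, carrying the induced K\"ahler form $\frac{\sqrt{-1}}{c_1(1+|z|^2)^2}\der z\wedge\der\bar z + \frac{\sqrt{-1}}{c_2(1+|w|^2)^2}\der w\wedge\der\bar w$ and the descended holomorphic volume form $\Omega_{\mathrm{red}} = \der\log(z-1)\wedge\der\log(w-1)$, obtained by contracting $\Omega$ with the generator of the action. The fibration $\pi$ then becomes the map $(z,w)\mapsto(\log|z-1|,\log|w-1|)$ on the reduced surface, whose fibers are the $2$-tori $T_{b_1,b_2} = \{|z-1| = \mathrm{e}^{b_1}\}\times\{|w-1| = \mathrm{e}^{b_2}\}$, and it remains to verify that these tori are special Lagrangian downstairs.

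Both checks are short. For the Lagrangian condition I restrict the reduced K\"ahler form to $T_{b_1,b_2}$ term by term: from $z-1 = \mathrm{e}^{b_1}\mathrm{e}^{\sqrt{-1}\phi}$ one gets $\der z = \sqrt{-1}(z-1)\der\phi$, whence $\der z\wedge\der\bar z|_{T} = |z-1|^2\,\der\phi\wedge\der\phi = 0$, and likewise $\der w\wedge\der\bar w|_{T} = 0$. For the special condition I use $\der\log(z-1)|_{T} = \sqrt{-1}\,\der\phi$ and $\der\log(w-1)|_{T} = \sqrt{-1}\,\der\psi$, so $\Omega_{\mathrm{red}}|_{T} = -\,\der\phi\wedge\der\psi$, a constant-phase (here real) multiple of the volume form of $T_{b_1,b_2}$. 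Goldstein's result \cite{Gol} then lifts these to the assertion that $F_b$ is special Lagrangian for $(\omega,\Omega)$. Equivalently, the same $\mathrm{e}^{\sqrt{-1}}$-bookkeeping carried out directly upstairs yields $\Omega|_{F_b} = -\sqrt{-1}\,\der\theta\wedge\der\phi\wedge\der\psi$, the extra factor $\sqrt{-1}$ coming from the surviving $\der\theta$-component $\sqrt{-1}\,\der\theta$ of $\der\log x$ (the $\der\log|x|$ and $\der\phi,\der\psi$ parts die against $\der\phi\wedge\der\psi$).

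I do not expect any single computation to be the obstacle; the care lies in the framework rather than the arithmetic. The points to pin down are that $\Omega$ is genuinely $T^1$-invariant with the correct pole divisor so that $\Omega_{\mathrm{red}}$ descends, and that the reduction is performed over the regular locus $B_0$ where $\pi$ has torus fibers, so that the reduced K\"ahler and holomorphic structures are well defined. The behaviour of $\pi$ over the boundary $\partial B$ and over the codimension-one discriminant locus must then be treated separately, as degenerations of this generic special Lagrangian picture; confirming that $\pi$ remains a genuine fibration there, and matching the pole divisor $D$ of $\Omega$ with $\partial B$ as claimed, is where I would spend the most attention.
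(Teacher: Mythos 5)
Your proposal is correct and follows essentially the same route as the paper: symplectic reduction by the Hamiltonian $T^1$-action à la Goldstein, identification of the reduced Kähler form and the contracted volume form $\der\log(z-1)\wedge\der\log(w-1)$, verification that the tori $\{|z-1|,|w-1|\ \text{const}\}$ are special Lagrangian downstairs, and the lift via \cite[Lemma 2]{Gol}. Your explicit computation $\Omega_{\mathrm{red}}|_{T}=-\der\phi\wedge\der\psi$ (purely real, i.e.\ special with respect to a fixed phase) is in fact slightly more careful than the paper's phrasing, which asserts that $\mathrm{Re}(\widetilde{\Omega})$ vanishes on the fibers where it is the imaginary part that does.
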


\begin{proof}
Consider the symplectic quotient of the Hamiltonian $T^1$-action: $\widetilde{M} := \pi_{T^1}^{-1}(\{b_3\}) / T^1$ for certain $b_3 \in \R$. 
Since the toric coordinates $z$ and $w$ are invariant under the $T^1$-action, they descend to the quotient $\widetilde{M}$.  
This gives an identification of $\widetilde{M}$ with $\PP^1 \times \PP^1$. 
The induced symplectic form on the quotient $\widetilde{M}$ is given by
\begin{align*}
\widetilde{\omega} &= \frac{\sqrt{-1}}{c_1(1+|z|^2)^2} \der z \wedge \der \bar{z} + \frac{\sqrt{-1}}{c_2(1+|w|^2)^2} \der w \wedge \der \bar{w}\\
&= \frac{\sqrt{-1} |z-1|^2}{c_1(1+|z|^2)^2} \der \log (z-1) \wedge \der \overline{\log(z-1)} + \frac{\sqrt{-1} |w-1|^2}{c_2(1+|w|^2)^2} \der \log (w-1) \wedge \der \overline{\log(w-1)}.
\end{align*}
The induced holomorphic volume form $\widetilde{\Omega}$, which is the contraction of $\Omega$ by the vector field induced from the $T^1$-action, 
equals to
$$
\widetilde{\Omega} = \der \log (z-1)\wedge \der \log (w-1).
$$ 
It is clear that that $\widetilde{\omega}$ and $\mathrm{Re}(\widetilde{\Omega})$ restricted on each fiber of the fibration $(|z-1|,|w-1|)$ are both zero.   
Hence the fibers of the map $(|z-1|,|w-1|)$ are special Lagrangian in $\widetilde{M}$.  
By \cite[Lemma 2]{Gol}, we therefore conclude that the fibers of $\pi$ are special Lagrangian. 
\end{proof}

We may think of this fibration as the combination of a conic bundle (Figure \ref{fig:ConicAmoeba2}) 
\begin{figure}[htbp]
 \begin{center} 
  \includegraphics[width=100mm]{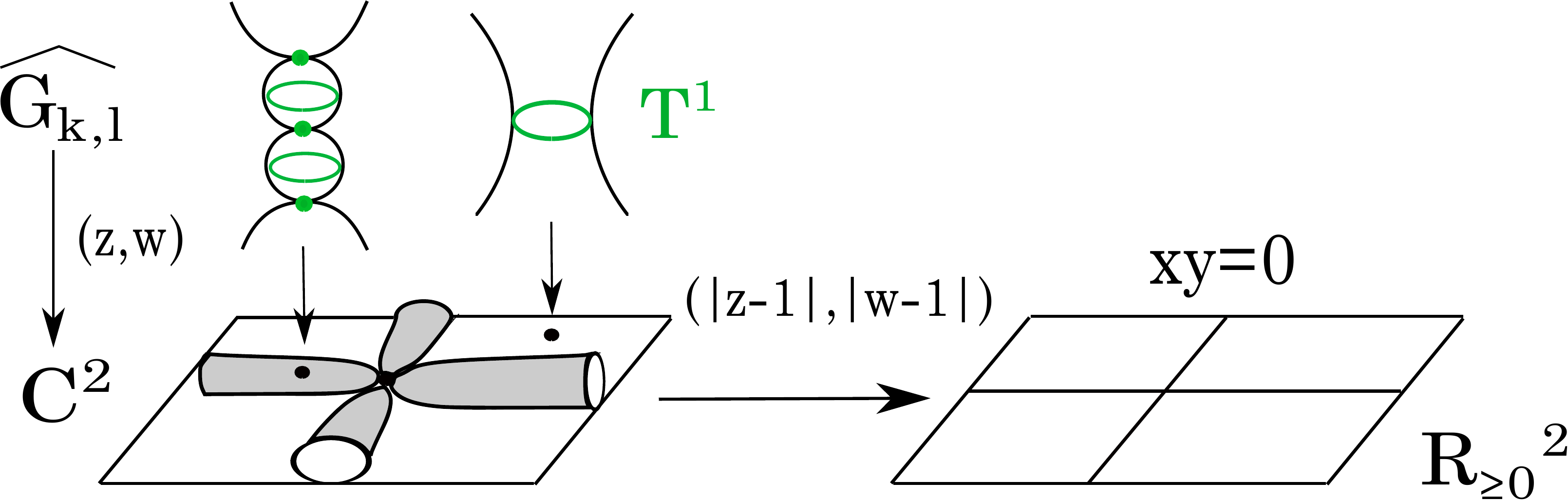}
 \end{center}
 \caption{Conic fibration after resolution}
\label{fig:ConicAmoeba2}
\end{figure}
and the moment map $\pi_{T^1}$ associated to the lift of $(x,y)$-coordinates (Figure \ref{fig:MomentMapGenConi}(a)). 
\begin{figure}[htbp]
 \begin{center} 
  \includegraphics[width=100mm]{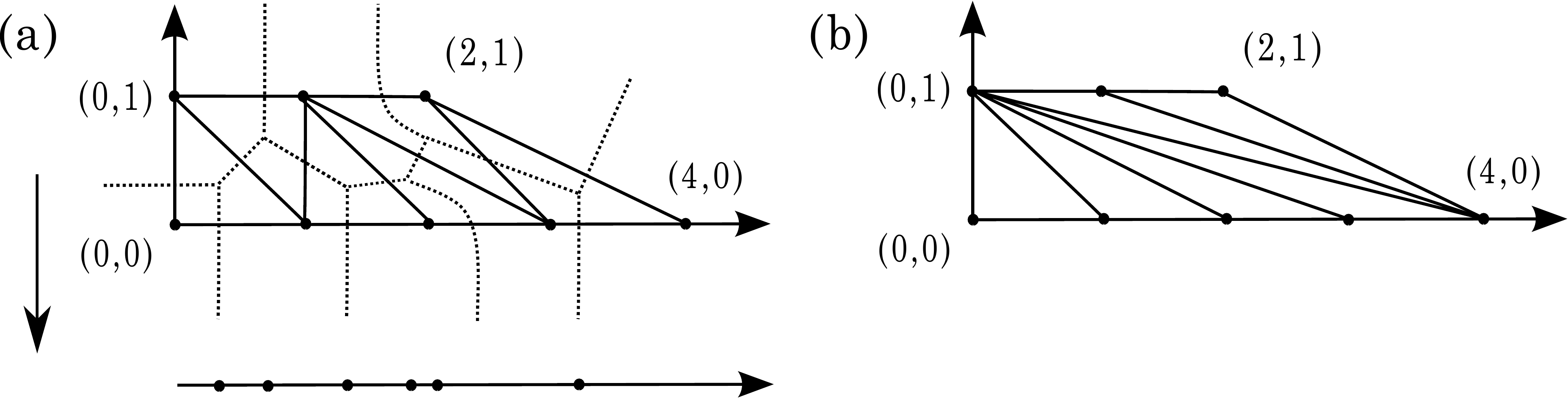}
 \end{center}
 \caption{(a) Moment map $\pi_{T^1}$, (b) Crepant resolution}
\label{fig:MomentMapGenConi}
\end{figure}
The latter measures the volumes of the exceptional curves $\PP^1$ of the crepant resolution $\widehat{G_{k,l}} \rightarrow G_{k,l}$.  

\begin{Prop}
The discriminant locus of the fibration $\pi$ is the union of the boundary $\partial B$ together with the lines 
$\{b_1=0, b_3=s_i\}_{i=1}^k \cup \{b_2=0, b_3=t_j\}_{j=1}^l \subset B$ for $s_i,t_j \in \R$ with $\mathrm{Crit}(\pi_{T^1})=\{s_1,\dots,s_k,t_1,\dots,t_l\}$.  
\end{Prop}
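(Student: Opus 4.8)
The plan is to read off the discriminant from the two structures out of which $\pi$ is assembled: the conic (symplectic reduction) part $(\log|z-1|,\log|w-1|)$ and the Hamiltonian $T^1$-part $\pi_{T^1}$. A regular fiber is a smooth $T^3$, so a point $b=(b_1,b_2,b_3)$ lies in the discriminant exactly when one of the three circles spanning $F_b$ collapses; I would therefore locate the collapsing locus of each circle separately and take the union.

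Over a regular value $b_3$ of $\pi_{T^1}$ the action generated by $(1,0,0)$ is free, so $F_b$ is a principal $T^1$-bundle over the $T^2$-fiber of the reduced fibration $(|z-1|,|w-1|)$ on $\widetilde{M}\cong \PP^1\times\PP^1$ from the previous proposition. The two reduced circles $\{|z-1|=e^{b_1}\}$ and $\{|w-1|=e^{b_2}\}$ collapse only at $z\in\{1,\infty\}$ and $w\in\{1,\infty\}$, that is, when $b_1\in\{\pm\infty\}$ or $b_2\in\{\pm\infty\}$; together with $b_3\in\partial I$, where the $T^1$-orbits themselves collapse along the compactifying divisors $\cD_{\xi=0}$ and $\cD_{\xi=\infty}$, this accounts for precisely the boundary $\partial B$.

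The interior discriminant must then lie over the critical values of $\pi_{T^1}$, i.e. over the fixed locus of the $(1,0,0)$-action. In toric terms the one-parameter subgroup generated by $(1,0,0)$ fixes the one-dimensional orbit $O_\sigma$ exactly when $(1,0,0)\in\mathrm{span}_{\R}(\sigma)$; since $(i{+}1,0,1)-(i,0,1)=(1,0,0)=(j{+}1,1,1)-(j,1,1)$, the relevant $\sigma$ are precisely the $2$-cones over the $k$ bottom edges $(i,0,1)$–$(i{+}1,0,1)$ and the $l$ top edges $(j,1,1)$–$(j{+}1,1,1)$ of the triangulation, and these $k+l$ fixed curves are the exceptional $\PP^1$'s. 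On a bottom fixed curve one has $z=\chi^{\nu_2}=0$, so $b_1=\log|0-1|=0$, while $\pi_{T^1}$ is constant equal to a critical value $s_i$ and $w=\chi^{\nu_4}$ varies, sweeping out the line in the $b_2$-direction; symmetrically the top curves give $b_2=0$ and $b_3=t_j$. This yields exactly $\{b_1=0,b_3=s_i\}_{i=1}^k\cup\{b_2=0,b_3=t_j\}_{j=1}^l$ with $\mathrm{Crit}(\pi_{T^1})=\{s_1,\dots,s_k,t_1,\dots,t_l\}$.

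The delicate point, and the step I expect to be the main obstacle, is that at a critical value $b_3=s_i$ the symplectic reduction $\widetilde{M}$ acquires a singularity, so the clean $\PP^1\times\PP^1$ description breaks down precisely where the interior discriminant lives. I would argue directly that the $(1,0,0)$-action remains free away from $\{z=0\}$ (resp. $\{w=0\}$) even over critical values, so that over the slice $b_3=s_i$ only the sublocus $b_1=0$ degenerates rather than the whole slice. Closely tied to this is getting the enumeration right: the fixed curves arise from the \emph{edges} of the triangulation, not from the toric divisors $D_0,\dots,D_{k+l+1}$, which is what produces the counts $k$ and $l$ rather than $k+1$ and $l+1$; verifying this combinatorial bookkeeping against Figure \ref{fig:MomentMapGenConi} is where care is most needed.
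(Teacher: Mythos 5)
Your argument is correct and follows essentially the same route as the paper: the boundary part of the discriminant comes from the degeneration of $\log|z-1|$, $\log|w-1|$ and of the $T^1$-orbits, while the interior part comes from the toric strata whose $2$-cones contain $(1,0,0)$ in their span, namely the $k$ bottom and $l$ top unit edges, on which $z=0$ (so $b_1=0$) resp.\ $w=0$ (so $b_2=0$) and $\pi_{T^1}$ is constant. Your added attention to the freeness of the action away from $\{z=0\}\cup\{w=0\}$ over critical values of $\pi_{T^1}$ is a reasonable refinement of the paper's terser argument.
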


\begin{proof}
The first and second coordinates of $\pi$ are $b_1=\log|z-1|$ and $b_2=\log|w-1|$ respectively, 
which degenerates over the boundaries $b_1 = \log|z-1| = \pm\infty$ or $b_2 = \log|w-1| = \pm\infty$. 
The third coordinate $\pi_{T^1}$ degenerates at those codimension-two toric strata whose corresponding 2-dimensional cones in the fan contain the vector $(1,0,0)$. 
These cones are either $[i-1,i] \times \{0\} \times \R$ for $1 \le i\le k$ or $[j-1,j] \times \{1\} \times \R$ for $1 \le j\le l$.  
The corresponding images under $\pi_{T^1}$ are isolated points $s_1,\dots,s_k$ or $t_1,\dots,t_l$ respectively.  Moreover $z=0$ on a toric strata corresponding to a cone $[i-1,i] \times \{0\} \times \R$, while $w=0$ on a toric strata corresponding to a cone $[j-1,j] \times \{1\} \times \R$.  Hence $b_1=1$ or $b_2=1$ respectively, and the discriminant locus is as stated above.
\end{proof}

\begin{Prop}
The wall $H$ of the fibration $\pi$ is given by the union of two vertical planes given by $b_1=0$ and $b_2=0$.  
\end{Prop}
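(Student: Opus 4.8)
The plan is to translate the wall condition into a constraint on the $z$- and $w$-components of holomorphic discs and then reduce the whole analysis to the conic fibres of the underlying conic bundle. By Definition \ref{def:wall}, a point $b \in B_0$ lies on $H$ precisely when $F_b$ bounds a non-constant holomorphic disc $u$ of Maslov index $0$; by Lemma \ref{MaslovIndex} this is equivalent to $D\cdot\beta = 0$, i.e. the image of $u$ must avoid all six components of the anti-canonical divisor $D=\cD_{z=1}\cup\cD_{w=1}\cup\cD_{z=\infty}\cup\cD_{w=\infty}\cup\cD_{\xi=0}\cup\cD_{\xi=\infty}$. Crucially, $u$ is allowed to meet the non-anti-canonical loci $\{z=0\}=D_0\cup\cdots\cup D_k$ and $\{w=0\}=D_{k+1}\cup\cdots\cup D_{k+l+1}$, and it is exactly these exceptional divisors that will generate the wall.

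The first key step is a maximum-principle argument showing that the $z$- and $w$-coordinates are constant along any Maslov-zero disc. Since $u$ avoids $\cD_{z=\infty}$, the composite $z\circ u\colon \mathbb{D}\to\C$ is holomorphic, and since $\partial u \subset F_b$ its boundary maps into the circle $|z-1|=e^{b_1}$. Thus $g:=z\circ u - 1$ is holomorphic with $|g| = e^{b_1}$ on $\partial\mathbb{D}$. If $u$ also avoids $\cD_{z=1}$, then $g$ is nowhere zero, so both $|g|$ and $|1/g|$ are subharmonic; the maximum principle applied to each forces $|g|\equiv e^{b_1}$, whence $g$, and therefore $z\circ u$, is constant. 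The identical reasoning applies to $w\circ u$. Consequently every Maslov-zero disc lies inside a single conic fibre $\{xy = z_0^k w_0^l\}$ over a boundary point $(z_0,w_0)$ with $|z_0-1|=e^{b_1}$ and $|w_0-1|=e^{b_2}$.

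It then remains to decide for which $(z_0,w_0)$ this conic fibre carries a non-constant disc bounded by the corresponding $T^1$-orbit while avoiding $\cD_{\xi=0}\cup\cD_{\xi=\infty}$. When $z_0\neq 0$ and $w_0\neq 0$ the affine conic is smooth and isomorphic to $\C^\ast$, on which the relevant $S^1$-orbit bounds no non-constant holomorphic disc, so $u$ is constant. When the conic degenerates to $\{xy=0\}$, which happens exactly when $z_0=0$ or $w_0=0$, the $S^1$-orbit bounds a disc running into the resolved node (which lies over $\{z=0\}$ or $\{w=0\}$, hence is disjoint from $D$), producing a genuine Maslov-zero disc. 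Since $z_0=0$ can occur on the circle $|z_0-1|=e^{b_1}$ only when $e^{b_1}=1$, i.e. $b_1=0$, and likewise $w_0=0$ forces $b_2=0$, the wall is contained in $\{b_1=0\}\cup\{b_2=0\}$; conversely, for every $b_3\in I$ such a disc exists over the singular fibres, so each of these two planes lies entirely in $H$.

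I expect the main obstacle to be the careful bookkeeping on the resolution $\widehat{G_{k,l}}^*$ in the degenerate case: one must verify that the disc into the singular conic genuinely lifts to a holomorphic disc in the smooth total space, passing through the appropriate exceptional $\PP^1$ in the chain over $\{z=0\}$ (resp. $\{w=0\}$) determined by the position of $b_3$ relative to the critical values $s_i,t_j$, that it meets none of the compactifying divisors $\cD_{\xi=0},\cD_{\xi=\infty}$, and that it remains non-constant for \emph{every} $b_3\in I$, so that the full planes $\{b_1=0\}$ and $\{b_2=0\}$—and not merely portions of them—constitute the wall.
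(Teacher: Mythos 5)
Your proof is correct and follows essentially the same route as the paper's: use Maslov index zero to force the disc away from $D$, apply the maximum principle to conclude that $z\circ u$ and $w\circ u$ are constant, and then observe that a smooth conic fibre is a cylinder in which the relevant circle bounds no nonconstant disc, so Maslov-zero discs exist only over degenerate fibres, i.e.\ exactly on $\{b_1=0\}\cup\{b_2=0\}$. Your write-up is in fact more explicit than the paper's (which compresses the maximum-principle step into a single sentence and only implicitly addresses the existence of discs in the exceptional components over the degenerate fibres), so the caveat you flag at the end is a reasonable point but not a gap in your argument.
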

\begin{proof}
Suppose a fiber $F_r$ bounds a non-constant holomorphic disc $u$ of Maslov index $0$.  By the Maslov index formula in Lemma \ref{MaslovIndex}, the disc does not intersect the boundary divisors $\{z = 0\}$ nor $\{w=0\}$.  Thus the functions $(z-1) \circ u$ and $(w-1) \circ u$ can only be constants.   If both the numbers $z \circ u$ and $w \circ u$ are non-zero, the fiber of $(z,w)$ is just a cylinder, and a fiber of $b_3$ defines a non-contractible circle in this cylinder, which topologically does not bound any non-trivial disc.  Thus either $z = 0$ or $w = 0$ on the disc, which implies that  $b_1=\log|z-1|=0$ or $b_2=\log|w-1|=0$.  In these cases $F_r$ intersects a toric divisor and bounds holomorphic discs in the toric divisor.
\end{proof}

Figure \ref{fig:Wall5}(a) illustrates the wall stated in the above proposition.
\begin{figure}[htbp]
 \begin{center} 
  \includegraphics[width=100mm]{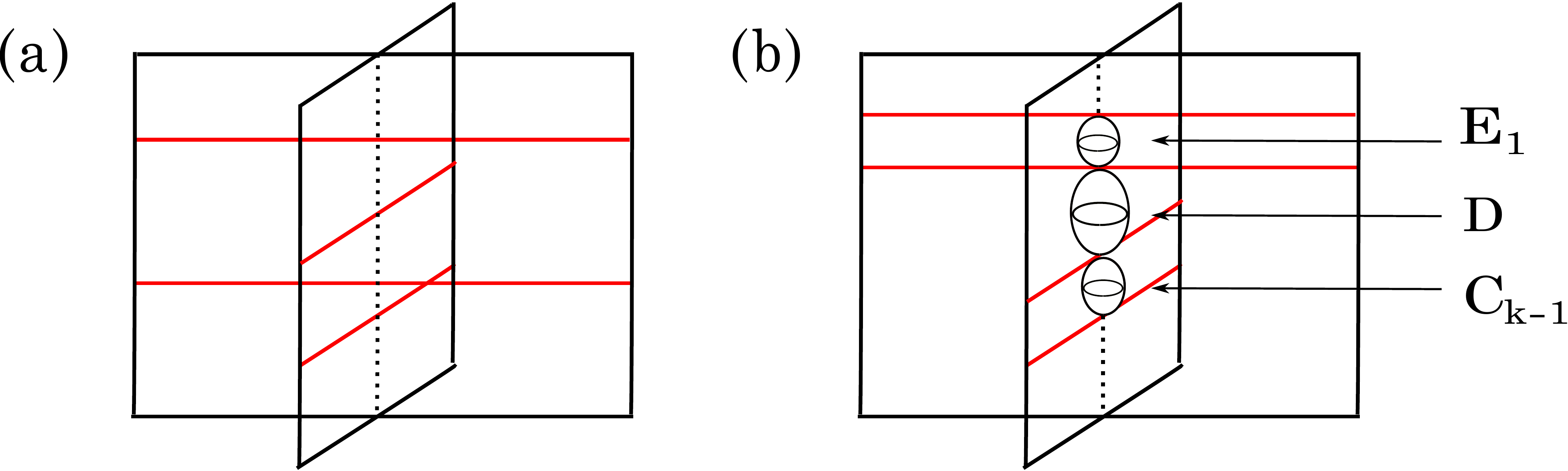}
 \end{center}
 \caption{(a) Walls, (b) Holomorphic spheres}
\label{fig:Wall5}
\end{figure}

From now on we fix the unique crepant resolution of $G_{k,l}$ such that $s_1 < \ldots < s_k < t_1 \ldots < t_l$ holds (Figure \ref{fig:MomentMapGenConi}(b)).  For other crepant resolution the construction is similar (while the SYZ mirrors have different coefficients, namely the mirror maps are different).
Such a choice is just for simplifying the notations and is not really necessary (see also Remark \ref{DifferentCrepantResolutions}). 

We then fix the basis 
\begin{equation} \label{eq:C&E}
\{C_i\}_{i=1}^{k-1} \cup \{C_0\} \cup \{E_i\}_{i=1}^{l-1}
\end{equation}
of $H_2(\widehat{G_{k,l}})$ (Figure \ref{fig:Wall5}(b)), where
$C_i$ for $1 \le i \le k-1$ is the holomorphic sphere class represented by the toric $1$-stratum corresponding to the $2$-cone  by $\{(0,1),(i,0)\}$; $C_0$ corresponds to the $2$-cone generated by $\{(0,1),(k,0)\}$; $E_i$ corresponds to the $2$-cone generated by $\{(i,1),(k,0)\}$.  The image of a holomorphic sphere in $C_i$ under the fibration map lies in $\{0\} \times \R \times [s_i,s_{i+1}]$; 
the image of $C_0$ lies in $\{0\} \times \{0\} \times [s_k,t_1]$, and  
the image of $E_i$ lies in $\R \times \{0\} \times [t_i,t_{i+1}]$. \\

Fix the contractible open subset
$$
U := B_0 \setminus \left\{(b_1,b_2,b_3) \ | \ b_1=0 \text{ or } b_2=0, b_3 \in [s_1,+\infty) \right\} \subset B_0
$$
over which the Lagrangian fibration $\pi$ trivializes.  
For $b = (b_1,b_2,b_3)$ with $b_1>0$ and $b_2 > 0$, we use the Lagrangian isotopy
\begin{equation} \label{eq:isotopy}
L^t = \{ \log|z-t| = b_1, \log|w-t| = b_2, \pi_T(x,y,z,w) = b_3\}
\end{equation}
for $t \in [0,1]$ to link a moment-map fiber (when $t=0$) with a Lagrangian torus fiber $F_b$ of $\pi$ (when $t=1$).  Then for a general base point $b' \in U$, we can link the fibers $F_b$ and $F_{b'}$ by a Lagrangian isotopy induced by a path joining $b$ and $b'$ in the contractible set $U$ (and the isotopy is independent of choice of the path).
Through the isotopy disc classes bounded by a moment-map fiber $L$ can be identified with those bounded by $F_b$, 
that is, $\pi_2(\widehat{G_{k,l}}^*,F_b) \cong \pi_2(\widehat{G_{k,l}}^*,L)$. 
Note that this identification depends on choice of trivialization, and henceforth we fix such a choice. 
The two vertical walls $\{b_1=0\}$ and $\{b_2=0\}$ divides the base $B =  [-\infty,\infty]^2 \times I$ into four chambers.  
Lagrangian torus fibers over different chambers have different open Gromov--Witten invariants.

\begin{Thm} \label{thm:G-oGW_wc}
Denote by $L$ a moment-map fiber of $\widehat{G_{k,l}}^*$ and by $F_b$ a Lagrangian torus fiber of $\pi$ at $b\in B_0$. 
Let $\beta \in \pi_2(\widehat{G_{k,l}}^*,F_b)$ with $\beta \pitchfork D$. 
\begin{enumerate}
\item Over the chamber $C_{++}:=\{b_1>0, b_2>0\}$, we have $n_\beta^{F_b} = n_\beta^L$.

\item Over the chamber $C_{+-}:=\{b_1>0, b_2<0\}$, we have $n_\beta^{F_b} = 0$ unless $\beta = \beta_{k+1}$, $\beta_{\xi=0}$, $\beta_{\xi=\infty}$, $\beta_{z=\infty}$, $\beta_{w=\infty} + (\beta_j - \beta_{k+1}) + \alpha$ for $k+1\le j \le k+l+1$ and $\alpha \in H_2^{c_1=0}$ being a class of rational curves which intersect the open toric orbit of the toric divisor $D_j$, or $\beta = \beta_i + \alpha$ for $0 \le i \le k$ and $\alpha \in H_2^{c_1=0}$ being a class of rational curves which intersect the open toric orbit of the toric divisor $D_i$.  Moreover
$$n^{F_b}_{\beta_{k+1}} = n^{F_b}_{\beta_{\xi=0}} = n^{F_b}_{\beta_{\xi=\infty}} = n^{F_b}_{\beta_{z=\infty}} = 1, \ \ \ 
n^{F_b}_{\beta_{w=\infty} + (\beta_j - \beta_{k+1}) + \alpha} = n^L_{\beta_j + \alpha}$$
for $k+1\le j \le k+l+1$, and
$$n_{\beta_i + \alpha}^{F_b} = n_{\beta_i + \alpha}^L$$
for $0 \le i \le k$.

\item Over the chamber $C_{-+}:=\{b_1<0, b_2>0\}$, we have $n_\beta^{F_b} = 0$ unless $\beta = \beta_0$, $\beta_{\xi=0}$, $\beta_{\xi=\infty}$, $\beta_{w=\infty}$, $\beta_{z=\infty} + (\beta_i - \beta_{0}) + \alpha$ for $0 \le i \le k$ and $\alpha \in H_2^{c_1=0}$ being a class of rational curves which intersect the open toric orbit of the toric divisor $D_i$, 
or $\beta = \beta_j + \alpha$ for $k+1\le j \le k+l+1$ and $\alpha$ being a class of rational curves which intersect the open toric orbit of the toric divisor $D_j$.  Moreover 
$$n^{F_b}_{\beta_0} = n^{F_b}_{\beta_{\xi=0}} = n^{F_b}_{\beta_{\xi=\infty}} = n^{F_b}_{\beta_{w=\infty}} = 1, \ \ \
n^{F_b}_{\beta_{z=\infty} + (\beta_i - \beta_0) + \alpha} = n^L_{\beta_i + \alpha}$$
for $0 \le i \le k$, and
$$n_{\beta_j + \alpha}^{F_b} = n_{\beta_j + \alpha}^L$$
for $k+1 \le j \le k+l+1$.

\item Over the chamber $C_{--}:=\{b_1<0, b_2<0\}$, we have $n_\beta^{F_b} = 0$ unless $\beta = \beta_0$, $\beta_{k+1}$, $\beta_{\xi=0}$, $\beta_{\xi=\infty}$, $\beta_{z=\infty} + (\beta_i - \beta_{0}) + \alpha$ for $0 \le i \le k$ and $\alpha \in H_2^{c_1=0}$ being a class of rational curves which intersect the open toric orbit of the toric divisor $D_i$, or $\beta_{w=\infty} + (\beta_j - \beta_{k+1}) + \alpha$ for $k+1 \le j \le k+l+1$ and $\alpha \in H_2^{c_1=0}$ being a class of rational curves which intersect the open toric orbit of the toric divisor $D_j$.  Moreover
$$n^{F_b}_{\beta_0} = n^{F_b}_{\beta_{k+1}} = n^{F_b}_{\beta_{\xi=0}} = n^{F_b}_{\beta_{\xi=\infty}} = 1, \ \ \ 
n^{F_b}_{\beta_{z=\infty} + (\beta_i - \beta_0) + \alpha} = n^L_{\beta_i + \alpha}$$
for $0 \le i \le k$, and
$$n^{F_b}_{\beta_{w=\infty} + (\beta_j - \beta_{k+1}) + \alpha} = n^L_{\beta_j + \alpha}$$
for $k+1 \le j \le k+l+1$.
\end{enumerate}
\end{Thm}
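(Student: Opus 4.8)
The plan is to compute every chamber's invariants by comparison with the toric moment-map fiber $L$ of $\widehat{G_{k,l}}^*$, whose disc invariants $n^L_\beta$ are given by the standard toric theory (Cho--Oh, \cite{FOOO}): each basic class has $n^L=1$, and the remaining contributions are carried by a basic class together with a Maslov-$0$ sphere class $\alpha$ (an exceptional curve of the resolution, so $D\cdot\alpha=0$). The guiding principle is the symplectic reduction by the Hamiltonian $T^1$-action that defines $\pi$: since $z$ and $w$ descend to the reduced space $\widetilde{M}\cong\PP^1\times\PP^1$, on which the induced fibration is the product $(|z-1|,|w-1|)$ of two circle fibrations, a holomorphic disc bounded by a fibre projects to a disc in one of the two $\PP^1$-factors, so the two walls $\{b_1=0\}$ and $\{b_2=0\}$ decouple and may be analysed independently, with the sphere bubbles $\alpha$ riding along. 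Because $\widehat{G_{k,l}}^*$ is \emph{not} semi-Fano, I would at every stage restrict to transversal classes $\beta\pitchfork D$; this is exactly what prevents the negative spheres supported in the added divisors from entering the relevant moduli spaces, so that the isotopy-invariance of $n_\beta$ recalled in Section \ref{Preliminary} remains available.

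For the chamber $C_{++}$ I would connect $F_b$ to $L$ through the explicit Lagrangian isotopy \eqref{eq:isotopy}. For $b_1,b_2>0$ and $t\in[0,1]$ one has $e^{b_1}>1\ge\max\{t,\,1-t\}$, so the circle $\{|z-t|=e^{b_1}\}$ meets neither $z=0$ nor $z=1$, and likewise in the $w$-variable; hence the whole isotopy avoids $D$ and never crosses the wall $H$. The invariance statement of Section \ref{Preliminary} then yields $n^{F_b}_\beta=n^L_\beta$ for all transversal $\beta$, which is assertion (1).

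For $C_{+-}$ the straight isotopy to $L$ would cross $\{b_2=0\}$, so I would instead run the wall-crossing analysis of Auroux \cite{Aur} and Abouzaid--Auroux--Katzarkov \cite{AAK}. The wall $\{b_2=0\}$ is where the fibre circle $\{|w-1|=e^{b_2}\}$ passes through $w=0$; as $b_2$ decreases through $0$ this circle ceases to enclose $w=0$, so a disc that previously capped off over a $w=0$ divisor $D_j$ (class $\beta_j$, $k+1\le j\le k+l+1$) can no longer be formed directly and must be reassembled from the disc running out to $\cD_{w=\infty}$ together with the Maslov-$0$ sphere joining $\cD_{w=\infty}$ to $D_j$. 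This is the substitution $\beta_j\rightsquigarrow\beta_{w=\infty}+(\beta_j-\beta_{k+1})$, the difference $\beta_j-\beta_{k+1}$ being the Maslov-$0$ class carried by the wall. I would establish the numerical identity $n^{F_b}_{\beta_{w=\infty}+(\beta_j-\beta_{k+1})+\alpha}=n^L_{\beta_j+\alpha}$ by exhibiting, under the reduction, an identification between the moduli space of stable discs of the left-hand class and that of the toric disc of class $\beta_j+\alpha$ — the former degenerating to the latter glued to the wall sphere — compatible with virtual classes and evaluation maps. The $z$-direction classes $\beta_i+\alpha$ ($0\le i\le k$) are untouched by this wall and keep their $C_{++}$ values $n^L_{\beta_i+\alpha}$, while $\beta_{k+1},\beta_{\xi=0},\beta_{\xi=\infty},\beta_{z=\infty}$ are the basic discs whose moduli are transverse points, giving invariant $1$. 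The chamber $C_{-+}$ is obtained by interchanging the roles of $z$ and $w$, and $C_{--}$ follows by performing the two substitutions $\beta_i\rightsquigarrow\beta_{z=\infty}+(\beta_i-\beta_0)$ and $\beta_j\rightsquigarrow\beta_{w=\infty}+(\beta_j-\beta_{k+1})$ simultaneously, which is legitimate precisely because the reduction decouples the two directions.

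The main obstacle is the moduli-space identification underlying the wall-crossing equalities, and specifically the control of the classes carrying a nontrivial sphere bubble $\alpha$. For a purely basic disc ($\alpha=0$) this reduces to Auroux's classical computation in the local model \cite{Aur}; once $\alpha\neq0$ one must show that gluing on the exceptional rational curve commutes with the degeneration across the wall and that, despite $\widehat{G_{k,l}}^*$ failing to be semi-Fano, no additional negative sphere contributes to the count. This is exactly the point at which the hypothesis $\beta\pitchfork D$ and the invariance result of Section \ref{Preliminary} must be invoked with care, and it — rather than the elementary isotopy of the chamber $C_{++}$ — is the technical heart of the theorem.
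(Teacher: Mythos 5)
Your treatment of the chamber $C_{++}$ coincides with the paper's: the isotopy \eqref{eq:isotopy} stays away from $z=0$ and $w=0$, no Maslov-$0$ discs appear along the way, and the invariance of $n_\beta$ for transversal classes gives $n^{F_b}_\beta = n^L_\beta$. You have also correctly identified which classes survive in the other chambers and how the substitution $\beta_j \rightsquigarrow \beta_{w=\infty}+(\beta_j-\beta_{k+1})$ should work. The gap is in how you propose to justify the numerical equalities across the walls. You reduce everything to ``exhibiting, under the reduction, an identification between the moduli space of stable discs of the left-hand class and that of the toric disc of class $\beta_j+\alpha$ \ldots compatible with virtual classes and evaluation maps,'' and you yourself flag this as the technical heart. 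But no mechanism for this identification is given, and a direct comparison of disc moduli on the two sides of the wall (including the sphere bubbles $\alpha$ and the obstruction theory) is exactly the hard analytic step; asserting that the two moduli problems ``degenerate'' into one another is not a proof, and the wall-crossing formalism of Auroux and \cite{AAK} gives you the \emph{form} of the transformation of generating functions, not the individual equalities $n^{F_b}_{\beta_{w=\infty}+(\beta_j-\beta_{k+1})+\alpha}=n^L_{\beta_j+\alpha}$ that the theorem asserts.

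The paper avoids this entirely by a different device: for $C_{+-}$ it first runs the isotopy $L^{1,t}$, $t\in[1,R]$, pushing the fiber far out in the $w$-direction (crossing no walls), and then applies the explicit holomorphic involution $\iota$ fixing $z,\pi_{T^1},\theta$ and sending $w\mapsto Rw/(w-R)$. This maps the fiber into a fiber over $C_{++}$, and since $\iota$ converges to the (negative) identity as $R\to\infty$, the pulled-back complex structure is a small deformation, so the invariants of transversal classes are unchanged. The whole content of the wall-crossing is then read off from how $\iota$ permutes disc classes (it fixes $\beta_i$, $\beta_{\xi=0}$, $\beta_{\xi=\infty}$, $\beta_{z=\infty}$, the differences $\beta_j-\beta_{k+1}$ and all curve classes, and swaps $\beta_{k+1}$ with $\beta_{w=\infty}$), combined with Case 1. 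Cases 3 and 4 use the analogous involutions in $z$, respectively in both variables. If you want to complete your argument you should either adopt this involution trick or supply an actual degeneration/gluing argument for the disc moduli with bubbles; as written, the chambers $C_{+-}$, $C_{-+}$, $C_{--}$ are not proved.
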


\begin{proof}
The open Gromov--Witten invariants $n_\beta$ is non-zero only when $\beta$ has Maslov index $2$, and so we can focus on $\mu(\beta)=2$ with $\beta \pitchfork D$.

For a fiber $F_{(b_1,b_2,b_3)}$ with $b_1>0$ and $b_2>0$, $F_{(b_1,b_2,b_3)}$ is Lagrangian isotopic to a moment-map fiber $L$ through $L_t$ defined by Equation \eqref{eq:isotopy}.  
Moreover each $L_t$ does not bound any holomorphic disc of Maslov index $0$ because for every $t \in [0,1]$, 
the circles $|z-t| = b_1$ and $|w-t|=b_2$ never pass through $z=0$ and $w=0$ respectively.  
Thus the open Gromov--Witten invariants of $L$ and that of $F_{(b_1,b_2,b_3)}$ are the same. 

Now consider the chamber $C_{+-}$.  First we use the Lagrangian isotopy 
$$
L^{1,t} = \{ \log|z-1| = b_1, \log|w-t| = b_2, \pi_T(x,y,z,w) = b_3\}
$$
for $t \in [1, R]$ which identifies $F_{(b_1,b_2,b_3)}$ with $L^{1,R}$ for $R \gg 0$.  $L^{1,t}$ never bounds any holomorphic disc of Maslov index $0$, since for every $t \in [1,R]$, 
the circles $|z-1| = b_1$ and $|w-t|=b_2$ never pass through $z=0$ and $w=0$ respectively.

Then we take the involution $\iota: \widehat{G_{k,l}}^* \to \widehat{G_{k,l}}^*$ defined as identity on $z, \pi_{T^1}, \theta$ and mapping $w \mapsto \frac{Rw}{w-R}$. 
This involution maps the fiber $L^{1,R}$ to the Lagrangian
$$L' = \{ \log|z-1| = b_1, \log|w-R| = 2(\log R) - b_2, \pi_T(x,y,z,w) = b_3\}$$
which can again be identified with the fiber $F_{(b_1, 2 (\log R) - b_2,b_3)}$ with $b_1>0$ and $2 (\log R) - b_2 > 0$. 
Also $\iota$ tends to the negative identity map as $R$ tends to infinity. 
Hence for $R \gg 0$, the pulled-back complex structure by $\iota$ is a small deformation of the original complex structure, 
and hence the open Gromov--Witten invariants of $F_{(b_1,b_2,b_3)}$ remain invariant. 
Now using Case $1$ the open Gromov--Witten invariants of $F_{(b_1,2 (\log R) - b_2,b_3)}$ can be identified with a moment-map fiber $L$.

By considering the intersection numbers of the disc classes and the divisors, one can check that the disc classes $\beta_i$ for $0 \le i \le k$, $\beta_{k+1}$, $\beta_{\xi=0}$, $\beta_{\xi=\infty}$, $\beta_{z=\infty}$, $(\beta_j - \beta_{k+1})$ for $k+1 \le j \le k+l+1$, and all rational curve classes $\alpha$ are invariant under the involution.  Moreover $\beta_{k+1}$ and $\beta_{w=\infty}$ are switched under the involution.  Putting all together, we obtain a complete relation between open Gromov--Witten invariants of $F_{(b_1,b_2,b_3)}$ and that of $L$, and this gives the formulae in Case $2$.

Cases $3$ and $4$ are similar.  For Case $3$ we use the Lagrangian isotopy 
$$
L^{t,1} = \{ \log|z-t| = b_1, \log|w-1| = b_2, \pi_T(x,y,z,w) = b_3\}
$$
for $t \in [1, R]$ and the involution defined as identity on $w, \pi_{T^1}, \theta$ and mapping $z \mapsto \frac{Rz}{z-R}$.  
For Case 4 we use the Lagrangian isotopy 
$$
L^{t} = \{ \log|z-t| = b_1, \log|w-t| = b_2, \pi_T(x,y,z,w) = b_3\}
$$
for $t \in [1,R]$ and the involution defined as identity on $\pi_{T^1}, \theta$ and mapping $z \mapsto \frac{Rz}{z-R}$, $w \mapsto \frac{Rw}{w-R}$. 

\end{proof}

We can compute the the open Gromov--Witten invariants of the moment-map fiber, using the open mirror theorem \cite[Theorem 1.4 (1)]{CCLT}.  
The result is essentially the same as the one in \cite[Theorem 4.2]{LLW} for the minimal resolution of $A_n$-singularities as follows. 

\begin{Thm} \label{thm:G-oGW_toric}
Let $L$ be a regular moment-map fiber of $\widehat{G_{k,l}}$, and consider a disc class $\beta \in \pi_2(X,L)$.  
The open Gromov--Witten invariant $n_\beta^L$ equals to $1$ if $\beta = \beta_p + \alpha$ for $0 \le p \le k+l+1$, where $\alpha$ is a class of rational curves which takes the form
$$ \alpha = \left\{\begin{array}{ll} 
\sum_{i=1}^{k-1} s_i C_i & \textrm{ when } 1 \leq p \leq k-1; \\
\sum_{i=1}^{l-1} s_i E_i & \textrm{ when } k+2 \leq p \leq k+l;\\
0 & \textrm{ when } p = 0, k, k+1 \textrm { or } k+l+1,
\end{array}
\right.$$
and $\{s_i\}_{i=1}^{m-1}$ (where $m$ equals to $k$ in the first case and $l$ in the second case) is an admissible sequence with center $p$ in the first case, which means that
\begin{enumerate}
\item $s_i \geq 0$ for all $i$ and $s_{1}, s_{m-1} \leq 1$; 
\item $s_i \leq s_{i+1} \leq s_i + 1$ when $i < p$, and $s_i \ge s_{i+1} \ge s_i-1$ when $i \ge p$, 
\end{enumerate}
and with center $p-k-1$ in the second case.
For any other $\beta$, $n_\beta^L = 0$.
\end{Thm}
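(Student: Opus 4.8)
The plan is to reduce the computation to the open mirror theorem for semi-Fano toric Calabi--Yau manifolds \cite[Theorem 1.4(1)]{CCLT}, and then to recognize the resulting generating function as the gluing of two $A$-type computations already carried out in \cite[Theorem 4.2]{LLW}.

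First I would record what has to be counted. By Lemma \ref{MaslovIndex} the open Gromov--Witten invariant $n_\beta^L$ vanishes unless $\mu(\beta)=2D\cdot\beta=2$, i.e. $D\cdot\beta=1$. Since $\widehat{G_{k,l}}$ is a toric Calabi--Yau threefold whose defining polytope $\Delta_{k,l}$ has no interior lattice point, it carries no compact toric divisor, and every compact curve class $\alpha$ satisfies $D\cdot\alpha=c_1(\alpha)=0$; in particular $\widehat{G_{k,l}}$ is semi-Fano in the sense used above, so $n_\beta^L$ is well defined, deformation invariant, and the toric machinery applies. Every transversal class with $D\cdot\beta=1$ then decomposes uniquely as $\beta=\beta_p+\alpha$ with $\beta_p$ a basic disc class ($0\le p\le k+l+1$) and $\alpha\in H_2^{c_1=0}$ an effective class of rational curves. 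Hence it suffices to evaluate, for each $p$, the generating function
$$
1+\sum_{\alpha\in H_2^{c_1=0}} n^L_{\beta_p+\alpha}\,\mathrm{e}^{-\int_\alpha\omega}.
$$

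The open mirror theorem \cite[Theorem 1.4(1)]{CCLT} identifies each such generating function with the corresponding coordinate of the inverse mirror map of $\widehat{G_{k,l}}$, a hypergeometric series whose terms are indexed by the linear relations among the rays of the fan. I would compute this mirror map explicitly from the triangulated trapezoid of Figure \ref{fig:GeneralizedConifold}(b). The exceptional curve classes split into the two chains $\{C_i\}_{i=1}^{k-1}$ and $\{E_i\}_{i=1}^{l-1}$ of \eqref{eq:C&E}, and near each chain the fan looks exactly like that of the minimal resolution of an $A$-singularity. Consequently the mirror map factorizes chain-by-chain, and for a basic class $\beta_p$ with center in the $C$-chain (resp. the $E$-chain) the generating function reduces verbatim to the $A_{k-1}$ (resp. $A_{l-1}$) computation of \cite[Theorem 4.2]{LLW}. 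Reading off the nonzero terms of that series yields precisely the admissible-sequence description: the inequalities $s_i\le s_{i+1}\le s_i+1$ for $i<p$ and $s_i\ge s_{i+1}\ge s_i-1$ for $i\ge p$, together with $s_1,s_{m-1}\le 1$, are exactly the conditions under which the associated coefficient in the mirror map equals $1$. The four extremal classes $p\in\{0,k,k+1,k+l+1\}$ correspond to rays at the corners of the trapezoid to which no nontrivial curve can be attached, forcing $\alpha=0$.

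The main obstacle is twofold. First, one must justify that the two chains decouple, i.e. that no ``mixed'' class $\alpha$ involving both some $C_i$ and some $E_j$ contributes: geometrically a stable Maslov-index-$2$ disc in class $\beta_p+\alpha$ degenerates into a basic disc together with a connected configuration of the rational curves meeting it, and the combinatorics of the triangulation forces this configuration to lie entirely in a single chain. This connectedness analysis, which also underlies the multiplicity-one statement $n^L_\beta\in\{0,1\}$, is where the detailed toric disc classification of \cite{LLW} is invoked. Second, the explicit inversion of the mirror map and the matching of its coefficients with the admissibility inequalities require care, especially near the ends of each chain where the boundary conditions $s_1,s_{m-1}\le 1$ become active.
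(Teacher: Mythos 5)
Your proposal follows essentially the same route as the paper: invoke the open mirror theorem of \cite[Theorem 1.4(1)]{CCLT} to express the generating function $\sum_\alpha n_{\beta_p+\alpha}q^\alpha$ as $\exp g_p(\check q)$, observe that the mirror map and the functions $g_p$ decouple into the $C$-chain and $E$-chain, and then quote the $A_{n}$-resolution computation of \cite[Theorem 4.2]{LLW}. The only cosmetic difference is that the paper justifies the decoupling directly from the summation conditions in the hypergeometric series $g_i(\check q)$ (each $g_i$ depends only on the $\check q^{C_r}$ or only on the $\check q^{E_r}$), rather than via a geometric degeneration argument for stable discs, but this does not change the substance of the proof.
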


\begin{proof}
We will prove the assertion by using the open mirror theorem.  
Recall the curve classes $C_i$, $C_0$ and $E_j$ introduced in Equation \eqref{eq:C&E} for $1 \le i\le k-1$ and $1 \le j\le l-1$.  
$C_i$ and $E_i$ are $(-2,0)$-curves, while $C_0$ is a $(-1,-1)$-curve.  The intersection numbers with the toric prime divisors $D_j$ are as follows:
\begin{enumerate}
\item $C_i \cdot D_{i-1} = C_i \cdot D_{i+1} = 1; C_i \cdot D_i = -2$; and $C_i \cdot D_j = 0$ for all $ j \not= i-1,i,i+1$;
\item $C_0 \cdot D_k = C_0 \cdot D_{k+1} = -1; C_0 \cdot D_{k-1} = C_0 \cdot D_{k+2} = 1$; and $C_0 \cdot D_j=0$ for all $j \not= k-1,k,k+1,k+2$;
\item $E_i \cdot D_{k+i-1} = E_i \cdot D_{k+i+1} = 1; E_i \cdot D_{k+i} = -2$; and $E_i \cdot D_j = 0$ for all $j \not= k+i-1,k+i,k+i+1$. 
\end{enumerate}

Let $q^{C_i},q^{C_0},q^{E_i}$ be the corresponding K\"ahler parameters and $\check{q}^{C_i},\check{q}^{C_0},\check{q}^{E_i}$ be the corresponding complex parameters.  
They are related by the mirror map:
$$
q^{C_i} = \check{q}^{C_i} \exp \Big(-\sum_{j=0}^{k+1+l} (C_i \cdot D_j) g_j(\check{q}) \Big) 
=  \check{q}^{C_i} \exp \left(- \left(g_{i-1}(\check{q}) + g_{i+1}(\check{q}) - 2 g_i(\check{q})\right) \right),
$$
$$
q^{C_0} = \check{q}^{C_0} \exp \Big(-\sum_{j=0}^{k+1+l} (C_0 \cdot D_j) g_j(\check{q}) \Big) 
= \check{q}^{C_0} \exp \left(-\left(g_{k-1}(\check{q}) + g_{k+1}(\check{q})\right) \right),
$$
and
$$
q^{E_i} = \check{q}^{E_i} \exp \Big(-\sum_{j=0}^{k+1+l} (E_i \cdot D_j) g_j(\check{q}) \Big) 
= \check{q}^{E_i} \exp \left(-\left(g_{k+i-1}(\check{q}) + g_{k+i+1}(\check{q}) - 2 g_{k+i}(\check{q})\right) \right).
$$
The functions $g_i(\check{q})$ are attached to the toric prime divisor $D_i$ for $0 \le i\le k+l+1$.  
We have $g_0 = g_k = g_{k+1} = g_{k+l+1} = 0$.  
Moreover for $1 \le i\le k-1$, $g_i$ only depends on the variables $\check{q}^{C_r}$ for $1 \le r \le k-1$; for $k+2 \le i \le k+l$, 
the function $g_i$ only depends on the variables $\check{q}^{E_r}$ for $1 \le r \le l-1$. 
Explicitly $g_i$ is written in terms of hypergeometric series:
$$
g_i(\check{q}):=\sum_{\substack{d \cdot D_i < 0\\d \cdot D_r \geq 0 \\ \textrm{for all } r \not= i}}\frac{(-1)^{(D_i\cdot d)}(-(D_i\cdot d)-1)!}{\prod_{p\neq i} (D_p\cdot d)!}\check{q}^d
$$
where for $1 \le i\le k-1$, 
the summation is over $d = \sum_{r=1}^{k-1} {n_j C_j}$ ($n_j \in \Z_{\geq 0}$) with $d \cdot D_i < 0$ and $d \cdot D_p \geq 0$ for all $p \not= i$; 
for $k+2 \le i\le k+l$, the summation is over $d = \sum_{r=1}^{l-1} {n_j E_j}$ ($n_j \in \Z_{\geq 0}$) with $d$ satisfying the same condition.
Then the open mirror theorem \cite[Theorem 1.4 (1)]{CCLT} states that
$$ \sum_{\alpha} n_{\beta_i + \alpha} q^{\alpha} (\check{q}) = \exp g_i(\check{q}). $$
Note that for $1 \le i\le k-1$, the function $g_i$ takes exactly the same expression as that in the toric resolution of $A_{k-1}$-singularity; for $1 \le i\le l-1$, the function $g_{i+k+1}$ takes exactly the same expression as that in the toric resolution of $A_{l-1}$-singularity.  Thus the mirror maps for $q^{C_i}$ and $q^{E_i}$ coincide with that for $A_{k-1}$-resolution and $A_{l-1}$-resolution respectively.  Moreover the above generating function of open Gromov--Witten invariants coincide.  Then result follows from the formula for open Gromov--Witten invariants in $A_{n}$-resolution given in \cite[Theorem 4.2]{LLW}.

\end{proof}

\begin{Rem}
Theorem \ref{thm:G-oGW_toric} can also be proved by comparing the disc moduli for $\widehat{G_{k,l}}$ and resolution of $A_k$- and $A_l$-singularities, 
which involves details of obstruction theory of disc moduli space.  
Here take the more combinatorial approach using open mirror theorems. 
\end{Rem}

\begin{Thm}
The SYZ mirror of the resolved generalized conifold $\widehat{G_{k,l}}$ is given by the deformed orbifolded conifold in $\C^4 \times \C^\times$ defined by the equations ($U_1,U_2,V_1,V_2 \in \C$ and $Z\in\C^\times$)
\begin{align}
U_1V_1&=(1+Z)(1+q_1Z)\dots(1+q_1\dots q_{k-1}Z), \notag \\
U_2V_2&=(1+cZ)(1+q'_1cZ)\dots(1+q_1'\dots q_{l-1}'cZ), \notag
\end{align}
where $q_i=\mathrm{e}^{- \int_{C_i}\omega}$, $q_j'=\mathrm{e}^{- \int_{E_i}\omega}$, and $c=q_1 \ldots q_{k-1} \mathrm{e}^{- \int_{C_0}\omega}$. 
\end{Thm}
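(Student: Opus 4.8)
The plan is to run the three-step recipe of Section~\ref{Preliminary} for the doubled Gross fibration $\pi$, feeding in the open Gromov--Witten data of Theorems~\ref{thm:G-oGW_wc} and~\ref{thm:G-oGW_toric}. First I set up semi-flat coordinates on the semi-flat mirror of $\pi^{-1}(B_0)$ adapted to the three circle directions of the $T^3$-fibre, namely the duals of $\arg(z-1)$, $\arg(w-1)$ and the $T^1$-angle $\theta$. Let $Z$ be the holonomy variable of the $\theta$-direction (the complexification of the $\pi_{T^1}$-coordinate $b_3$); since Theorem~\ref{thm:G-oGW_wc} gives $n_{\beta_{\xi=0}}=n_{\beta_{\xi=\infty}}=1$ in every chamber, the generating functions~\eqref{eq:gen} attached to $\cD_{\xi=0}$ and $\cD_{\xi=\infty}$ are uncorrected monomials whose product is constant, so together they contribute a single invertible coordinate $Z\in\C^\times$. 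I then let $U_1,V_1,U_2,V_2$ be the generating functions attached to $\cD_{z=1},\cD_{z=\infty},\cD_{w=1},\cD_{w=\infty}$, so that $\C[Z_1,\dots,Z_6]=\C[U_1,V_1,U_2,V_2,Z,Z^{-1}]$ and the theorem reduces to identifying the relations among these functions.

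The heart of the argument is the evaluation of the products $U_1V_1$ and $U_2V_2$. Over the chamber $C_{-+}$, Theorem~\ref{thm:G-oGW_wc}(3) shows that $V_1=Z_{\cD_{z=\infty}}$ acquires exactly the corrections $\beta_{z=\infty}+(\beta_i-\beta_0)+\alpha$ with invariant $n^L_{\beta_i+\alpha}$ for $0\le i\le k$, while $U_1=Z_{\cD_{z=1}}$ remains a single term. The mechanism producing the $Z$-dependence is that the boundary class $\partial(\beta_i-\beta_0)=(i,0,1)-(0,0,1)=(i,0,0)$ winds $i$ times around the $T^1$-orbit generated by $(1,0,0)$, so that $\Hol(\partial(\beta_i-\beta_0))=Z^i$, whereas the $\arg(z-1)$-holonomies of $U_1$ and $V_1$ cancel in the product. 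Hence $U_1V_1$ is a function of $Z$ alone, equal (after the normalisation absorbing the constant $\mathrm{e}^{-\int_{\beta_{z=1}+\beta_{z=\infty}}\omega}$) to
$$
U_1V_1=\sum_{i=0}^{k}\Big(\sum_{\alpha}n^L_{\beta_i+\alpha}\,\mathrm{e}^{-\int_{\alpha}\omega}\Big)\,\mathrm{e}^{-\int_{\beta_i-\beta_0}\omega}\,Z^i .
$$
By the open mirror theorem computation already performed in the proof of Theorem~\ref{thm:G-oGW_toric} (which reduces to the $A_{k-1}$-resolution formula of~\cite{LLW}), the coefficient of $Z^i$ becomes, in the flat coordinates $q_i=\mathrm{e}^{-\int_{C_i}\omega}$, the $i$-th elementary symmetric function of $1,q_1,q_1q_2,\dots,q_1\cdots q_{k-1}$, so the series factors as $\prod_{i=0}^{k-1}(1+q_1\cdots q_iZ)$. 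The identical computation in the $w$-direction, using the curves $E_j$ and the chamber $C_{+-}$, gives $U_2V_2=\prod_{j=0}^{l-1}(1+q_1'\cdots q_j'\,cZ)$; the overall shift $c=q_1\cdots q_{k-1}\,\mathrm{e}^{-\int_{C_0}\omega}$ is precisely the area accumulated along the $(-1,-1)$-curve $C_0$ joining the end of the $z$-chain to the start of the $w$-chain.

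It remains to verify that these two equations cut out the mirror globally rather than over a single chamber. The four chamber-wise descriptions in Theorem~\ref{thm:G-oGW_wc} differ only by the wall-crossing factors across $\{b_1=0\}$ and $\{b_2=0\}$, and these factors leave each product $U_iV_i$ invariant; hence $U_1V_1$ and $U_2V_2$ glue to well-defined regular functions on $\Spec\C[U_1,V_1,U_2,V_2,Z,Z^{-1}]$ equal to the stated polynomials, and comparison with the defining equations of $\widetilde{O_{k,l}}$ identifies the SYZ mirror. I expect the main obstacle to be the bookkeeping behind the displayed identity: one must verify that $U_1=Z_{\cD_{z=1}}$ indeed reduces to a single term over $C_{-+}$ and that the winding numbers entering the generating functions are computed correctly, which requires treating $\cD_{z=1}$ and $\cD_{w=1}$ as the non-toric divisors linearly equivalent to $\cD_{z=\infty}$ and $\cD_{w=\infty}$. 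Once the $Z$-monomials are pinned down and the factorisation of~\cite{LLW} is invoked, the gluing and the final comparison are routine.
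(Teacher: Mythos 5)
Your proposal is correct and follows essentially the same route as the paper: both deduce the chamber-wise factorizations of the generating functions $Z_{\cD_{z=1}}, Z_{\cD_{z=\infty}}, Z_{\cD_{w=1}}, Z_{\cD_{w=\infty}}$ from Theorems \ref{thm:G-oGW_wc} and \ref{thm:G-oGW_toric} together with the elementary-symmetric-function combinatorics of \cite{LLW}, and then read off the two relations $U_1V_1$ and $U_2V_2$. The only difference is presentational (you evaluate each product in a single convenient chamber and argue chamber-independence, while the paper tabulates all four chambers), and your flagged bookkeeping points, such as the normalization absorbing $\mathrm{e}^{-\int_{\beta_i-\beta_0-i\beta_{\xi=0}}\omega}$ into the flat coordinates, are exactly the conventions the paper adopts implicitly.
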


\begin{proof}
Let $\tZ_\beta = \mathrm{e}^{-\int_{\beta}\omega}\Hol_{\nabla}(\partial \beta)$ be the semi-flat mirror complex coordinates corresponding to each disc class $\beta \in \pi_2(\widehat{G_{k,l}}^*,F_b)$ for $b \in B_0\setminus H$.  For simplicity we denote $\tZ_{\beta_{\xi=0}}$ by $\tZ$, $\tZ_{\beta_0}$ by $\tilde{U}_1$, $\tZ_{\beta_{k+1}}$ by $\tilde{U}_2$, $\tZ_{\beta_{z=\infty}}$ by $\tilde{V}_1$, and $\tZ_{\beta_{w=\infty}}$ by $\tilde{V}_2$.  We have $\tZ \tZ_{\beta_{\xi=\infty}} = q^{\beta_{\xi=0}+\beta_{\xi=\infty}}$ is a constant (since $\beta_{\xi=0}+\beta_{\xi=\infty} \in H_2(\widehat{G_{k,l}}^*)$), and for simplicity we set the constant to be $1$.  Thus $\tZ_{\beta_{\xi=\infty}} = \tZ^{-1}$.

Let $Z_{\cD}$ be the generating function of open Gromov--Witten invariants corresponding to a boundary divisor $\cD$ (Equation \eqref{eq:gen}).  
By Theorem \ref{thm:G-oGW_wc} there is no wall-crossing for the disc classes $\beta_{\xi=0}$ and $\beta_{\xi=\infty}$, and they are the only disc classes of Maslov index $2$ and intersecting $\cD_{\xi=0}$ and $\cD_{\xi=\infty}$ exactly once respectively.  Hence $Z_{\cD_{\xi=0}} = \tZ$ and $Z_{\cD_{\xi=\infty}} = \tZ^{-1}$.  For simplicity we denote $Z_{\cD_{\xi=0}}$ by $Z$, and hence $Z = \tZ$ (meaning that the coordinate $Z$ does not need quantum corrections).

Theorem \ref{thm:G-oGW_toric} gives the open Gromov--Witten invariants of moment-map tori, which then gives the open Gromov--Witten invariants of fibers of our Lagrangian fibration by Theorem \ref{thm:G-oGW_wc}.  Then by some nice combinatorics which also appears in \cite[Proof of Corollary 4.3]{LLW}, the generating functions factorizes as follows:
\begin{enumerate}
\item $Z_{\cD_{z=1}} = \tilde{U}_1$ and $Z_{\cD_{w=1}} = \tilde{U}_2$ over $C_{--}$, 
\item $Z_{\cD_{z=1}} = \tilde{U}_1(1+Z)(1+q_1Z)\dots(1+q_1\dots q_{k-1}Z)$ and $Z_{\cD_{w=1}} = \tilde{U}_2$ over $C_{+-}$, 
\item $Z_{\cD_{z=1}} = \tilde{U}_1$ and $Z_{\cD_{w=1}} = \tilde{U}_2(1+cZ)(1+q'_1cZ)\dots(1+q_1'\dots q_{l-1}'cZ)$ over $C_{-+}$, 
\item $Z_{\cD_{z=1}} = \tilde{U}_1(1+Z)(1+q_1Z)\dots(1+q_1\dots q_{k-1}Z)$ and $Z_{\cD_{w=1}} = \tilde{U}_2(1+cZ)(1+q'_1cZ)\dots(1+q_1'\dots q_{l-1}'cZ)$ over $C_{++}$. 
\item $Z_{\cD_{z=\infty}} = \tilde{U}_1^{-1}$ over $C_{++} \cup C_{+-}$, 
\item $Z_{\cD_{z=\infty}} = \tilde{U}_1^{-1} (1+Z)(1+q_1Z)\dots(1+q_1\dots q_{k-1}Z)$ over $C_{-+} \cup C_{--}$, 
\item $Z_{\cD_{w=\infty}} = \tilde{U}_2^{-1}$ over $C_{++} \cup C_{-+}$, 
\item $Z_{\cD_{w=\infty}} = \tilde{U}_2^{-1} (1+cZ)(1+q'_1cZ)\dots(1+q_1'\dots q_{l-1}'cZ)$ over $C_{+-} \cup C_{--}$.
\end{enumerate}
Therefore we conclude that the ring generated by the functions $Z=Z_{\cD_{\xi=0}}$, $Z_{\cD_{\xi=\infty}}=Z^{-1}$, $U_1:=Z_{\cD_{z=1}}$, $V_1:=Z_{\cD_{z=\infty}}$, $U_2:=Z_{\cD_{w=1}}$, $V_2:=Z_{\cD_{w=\infty}}$ is 
the polynomial ring $\C[U_1,U_2,V_1,V_2,Z,Z^{-1}]$ mod out by the relations 
\begin{align}
U_1V_1&=(1+Z)(1+q_1Z)\dots(1+q_1\dots q_{k-1}Z), \notag \\
U_2V_2&=(1+cZ)(1+q'_1cZ)\dots(1+q_1'\dots q_{l-1}'cZ).\notag 
\end{align}
This completes the proof of the theorem.  
\end{proof}

\begin{Rem} \label{DifferentCrepantResolutions}
If a different crepant resolution is taken, the mirror takes the same form as above 
while the coefficients of the polynomials on the right hand side are different functions of the K\"ahler parameters $q_i$ and $q_i'$.  
They correspond to different choices of limit points (and hence different flat coordinates) over the complex moduli.
\end{Rem}


\subsection{SYZ from $\widetilde{G_{k,l}}$ to $\widehat{O_{k,l}}$} \label{sec:smGtoO}
The deformed generalized conifold $\widetilde{G_{k,l}}$ is given by 
$$
\Big\{(x,y,z,w) \in \C^2 \times (\C\setminus \{1\})^2 \ \big| \  xy-\sum_{i=0}^k\sum_{j=0}^{l}a_{i,j}z^iw^j=0\Big\} 
$$
for generic coefficients $a_{i,j} \in \C$.  
It is a conic fibration over the second factor $(\C\setminus \{1\})^2$ with discriminant locus being the Riemann surface $\Sigma_{k,l} \subset (\C\setminus \{1\})^2$ 
defined by the equation $\sum_{i=0}^k\sum_{j=0}^{l}a_{i,j}z^iw^j=0$ 
which has genus $kl$ and $(k+l)$ punctures.  The SYZ construction for such a conic fibration follows from \cite{AAK}.  Here we just give a brief description.
We will use the standard symplectic form on $\C^2 \times (\C\setminus \{1\})^2$ restricted to the hypersurface $\widetilde{G_{k,l}}$. 
First, $\widetilde{G_{k,l}}$ is naturally compactified in $\PP^2\times(\PP^1)^2$ as a symplectic manifold, and we denote the compactification by $\widetilde{G_{k,l}}^*$. 
There is a natural Hamiltonian $T^1$-action on $\widetilde{G_{k,l}}^*$ given by, for $t \in T^1\subset \C$
$$
t\cdot (x,y,z,w):=(tx,t^{-1}y,z,w).
$$
By carefully analyzing the symplectic reduction of this $T^1$-action, 
a Lagrangian torus fibration $\pi:\widetilde{G_{k,l}}^* \to B:=[-\infty,\infty]^3$ was constructed in \cite[Section 4]{AAK}. 
Topologically the fibration is the homeomorphic to the naive one given by
$$
(x,y,z,w) \mapsto (b_1,b_2,b_3) = \Big(\log|z|, \log|w|,\frac{1}{2}(|x|^2-|y|^2)\Big).
$$
However since the symplectic form induced on the symplectic quotient is not the standard one on $\PP^2$, it has to be deformed to give a Lagrangian fibration. 

The discriminant locus of this fibration consists of the boundary of $B$ and $\overline{A_{k,l}} \times \{0\}$, 
where $\overline{A_{k,l}} \subset [-\infty,\infty]^2$ is the compactification of the amoeba $A_{k,l} \subset \R^2_{\ge0}$ of the Riemann surface $\Sigma_{k,l}$ (Figure \ref{fig:ConicAmoeba}), 
namely the image of $\Sigma_{k,l}$ under the map $(z,w) \mapsto (\log|z|,\log|w|)$. 
\begin{figure}[htbp]
 \begin{center} 
  \includegraphics[width=100mm]{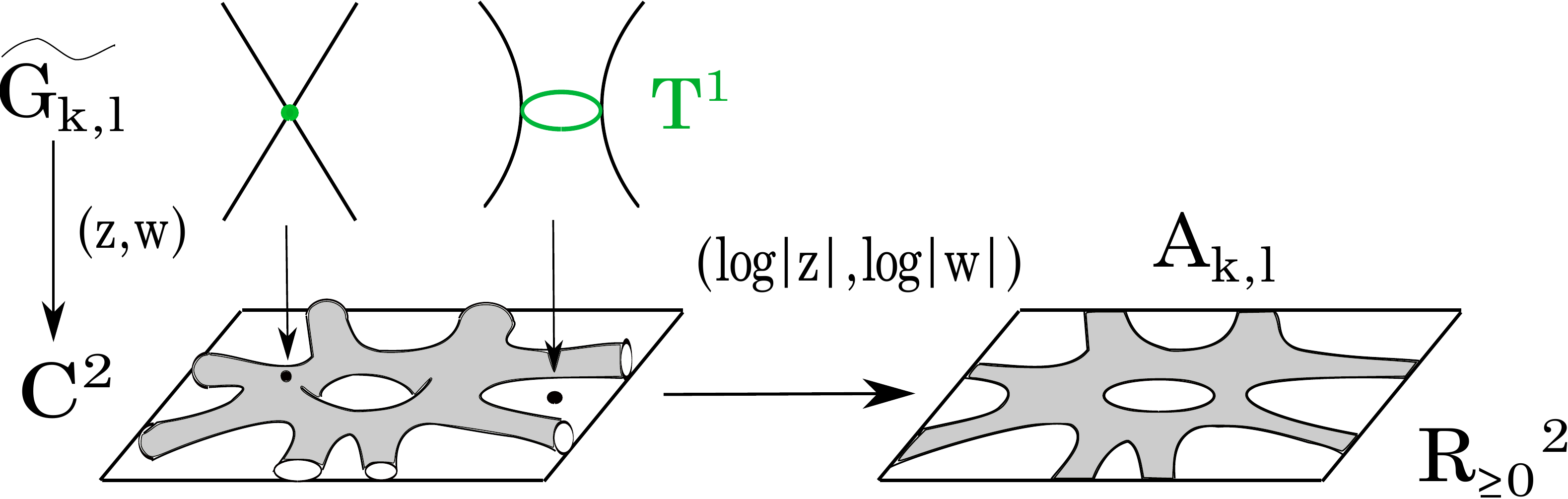}
 \end{center}
 \caption{Conic fibration and amoeba $A_{k,l}$}
\label{fig:ConicAmoeba}
\end{figure}
The wall for open Gromov--Witten invariants is given by $H=\overline{A_{k,l}}\times [-\infty,\infty]$ \cite[Proposition 5.1]{AAK}. 
The complement $B \setminus H$ consists of $(k+1)(l+1)$ chambers.  In this specific case, we have a nice degeneration as follows.  
Let us consider a special point on the the complex moduli space of $\widetilde{G_{k,l}}^*$ 
where the defining equation of the Riemann surface $\Sigma_{k,l}$ factorizes as
$$
\sum_{i=0}^k\sum_{j=0}^{l}a_{i,j}z^iw^j=f(z)g(w)
$$
for polynomials $f(z)$ and $g(w)$ of degree $k$ and $l$ respectively (and we assume that their roots are all distinct and non-zero). 
At this point, $\widetilde{G_{k,l}}^*$ acquires $kl$ conifold singularities.  The wall becomes the union of vertical hyperplanes $\{b_2=\log|r_i|\}_{i=1}^k \cup \{b_3=\log|s_j|\}_{j=1}^l \subset \R^3$, 
where $r_i$ and $s_j$ are the roots of $f(z)$ and $g(w)$ respectively (Figure \ref{fig:AmoebaConi}). 
\begin{figure}[htbp]
 \begin{center} 
  \includegraphics[width=70mm]{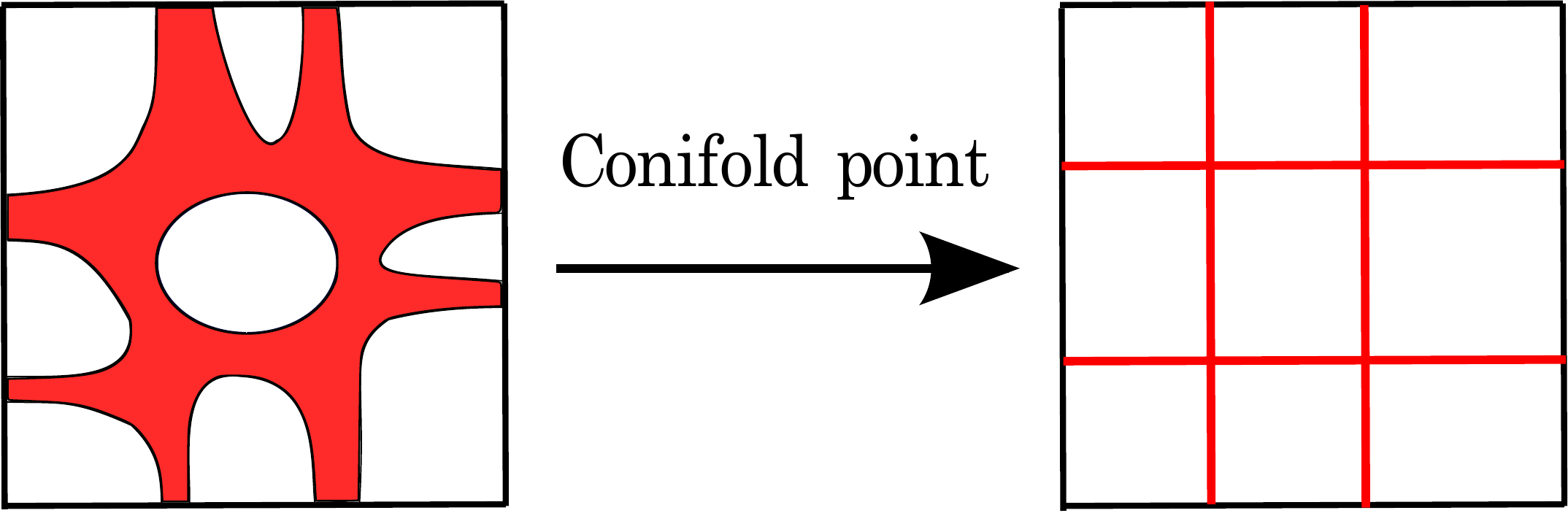}
 \end{center}
 \caption{Amoeba around conifold locus}
\label{fig:AmoebaConi}
\end{figure}
These hyperplanes divide the base into $(k+1)(l+1)$ chambers. 
We label the chambers by $C_{i,j}$ for $i = 0, \ldots, k$ and $j = 0, \ldots, l$ from left to right and from bottom to up.

\begin{Thm} \label{def G_kl to res O_kl}
The SYZ mirror of the deformed generalized conifold $\widetilde{G_{k,l}}$ is given by, for $U_i,V_i \in \C$ and $Z \in \C^\times$, 
$$
U_1V_1=(1+Z)^k, \ \ \ U_2V_2=(1+Z)^l, 
$$ 
which is the punctured orbifolded conifold $O_{k,l}$.
\end{Thm}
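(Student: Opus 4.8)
The plan is to run the SYZ construction of \cite{AAK} for the conic fibration $\widetilde{G_{k,l}}=\{xy=\sigma(z,w)\}$, using the Lagrangian torus fibration $\pi:\widetilde{G_{k,l}}^{*}\to B=[-\infty,\infty]^{3}$ and the wall $H=\overline{A_{k,l}}\times[-\infty,\infty]$ recalled above, and to read off the mirror from the generating functions $Z_{\cD}$ of open Gromov--Witten invariants attached to the boundary divisors. The decisive simplification is to work at the degeneration $\sigma(z,w)=f(z)g(w)$, where (as noted above) the wall collapses to the grid of coordinate hyperplanes $\{b_{1}=\log|r_{i}|\}_{i=1}^{k}\cup\{b_{2}=\log|s_{j}|\}_{j=1}^{l}$ and the base splits into the cells $C_{i,j}$. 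I expect this factorized wall to decouple the disc counting into a ``$z$-part'' governed by the $k$ vertical walls (the roots of $f$) and a ``$w$-part'' governed by the $l$ horizontal walls (the roots of $g$).

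Next I would fix the mirror coordinates. The single conic direction of $\widetilde{G_{k,l}}$ is the Hamiltonian $T^{1}$-orbit direction of $\pi$; its associated semi-flat coordinate $Z=\tZ_{\beta}$ carries no Maslov-index-$0$ wall-crossing, so it needs no quantum correction and ranges over $\C^{\times}$. Under T-duality this one direction dualizes to the coordinate $Z$ that is \emph{shared} by the two conics of $O_{k,l}$, which is exactly why the same $Z$ appears in both defining relations. The remaining coordinates $U_{1},V_{1}$ (resp. $U_{2},V_{2}$) are the generating functions $Z_{\cD}$ attached to the pair of boundary divisors governing the two ends of the $z$-direction (resp. $w$-direction). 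By the construction of Section \ref{Preliminary}, the SYZ mirror is $\Spec$ of the ring these generate, so the theorem reduces to computing these four generating functions and the two relations binding them.

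The heart of the argument is the wall-crossing computation, for which I would follow the template of \cite{AAK} together with the combinatorics used in \cite[Proof of Corollary 4.3]{LLW} (and in the previous subsection). Crossing the $i$-th vertical wall --- a nodal fiber of the conic bundle over a root of $f$ --- the Lagrangian fiber begins to bound one new family of Maslov-index-$2$ discs, and the wall-crossing transformation multiplies the relevant generating function by a factor $1+(\text{monomial})$. The crucial geometric input, and the feature that distinguishes this from the resolved computation of Theorem \ref{thm:G-oGW_wc}, is that $\widetilde{G_{k,l}}$ is a \emph{smoothing}: its nodal fibers carry no compact exceptional curve, only a Lagrangian vanishing cycle of zero symplectic area, so the disc classes crossing successive walls agree in the symplectic coordinate $Z$ and differ only by the complex moduli $\log(r_{i+1}/r_{i})$. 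Consequently every factor is literally $1+Z$ with the same $Z$ --- in contrast to the resolved case, where the positive areas $q_{i}=\mathrm{e}^{-\int_{C_{i}}\omega}$ of the exceptional $\PP^{1}$s produce the distinct factors $(1+q_{1}\cdots q_{i}Z)$. Multiplying over the $k$ vertical and $l$ horizontal walls then yields
\begin{align}
U_{1}V_{1}&=(1+Z)^{k}, \notag\\
U_{2}V_{2}&=(1+Z)^{l}. \notag
\end{align}
In particular the $k$ \emph{distinct} roots of $f$ collapse into a single order-$k$ zero at $Z=-1$, reflecting that the $kl$ conifold points of $\widetilde{G_{k,l}}$ are mirror to the $A_{k-1}\times A_{l-1}$-type orbifold singularity of $O_{k,l}$.

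Finally I would observe that the ring generated by $U_{1},V_{1},U_{2},V_{2},Z,Z^{-1}$ modulo these two relations is precisely the punctured orbifolded conifold $O_{k,l}$ in the coordinates of Section \ref{GenOrbConi} (with $Z\in\C^{\times}$ because $\{z=0\}$ has been removed), which completes the identification. I expect the main obstacle to be the wall-crossing step of the third paragraph: making rigorous, via the disc-counting machinery of \cite{AAK,Lau,CLL}, that each factor is exactly $1+Z$ with no exceptional-curve corrections, and that the two $\C$-valued coordinates of each conic glue consistently across the four chambers into the stated polynomial relations. The smoothing-versus-resolution dichotomy above is precisely the conceptual point that must be pinned down here.
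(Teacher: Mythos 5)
Your proposal follows essentially the same route as the paper's own (quite brief) proof: degenerate the defining polynomial to $f(z)g(w)$ so the wall collapses to $k+l$ parallel hyperplanes cutting the base into the chambers $C_{i,j}$, take $Z$ as the uncorrected semi-flat coordinate in the $T^1$-orbit direction, and glue the generating functions $U_p$, $V_p$ across chambers using a wall-crossing factor of $1+Z$ per wall (the paper cites \cite[Lemma 5.4]{AAK} for exactly this step). Your added explanation of why each factor is literally $1+Z$ --- no K\"ahler parameters because the vanishing cycles of a smoothing carry no symplectic area, in contrast to the exceptional curves producing the $q_i$'s in Theorem \ref{thm:G-oGW_wc} --- is correct and is the implicit content of the paper's appeal to \cite{AAK}.
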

\begin{proof}
The wall-crossing of open Gromov--Witten invariants was deduced in \cite[Lemma 5.4]{AAK}, and we just sketch the result here. 
For $p=1,2$, let $U_p$ be the generating function of open Gromov--Witten invariants for disc classes intersecting the boundary divisor $\pi^{-1}(\{b_p=0\})$ once.  Denote the semi-flat coordinates corresponding to the basic disc classes emanated from $\pi^{-1}(\{b_p=0\})$ by $\tilde{Z}_i$, and denote by $Z$ the semi-flat coordinate corresponding to the $b_3$-direction (which admits no quantum corrections).  
Then $U_1$ restricted to the chamber $C_{i,j}$ equals to the polynomial $\tilde{Z}_1 (1+Z)^i$, and $U_2$ restricted to the chamber $C_{i,j}$ equals to $\tilde{Z}_2 (1+Z)^j$. 
By gluing the various chambers together using the above wall-crossing factor $1+Z$, we obtain the SYZ mirror as claimed. 
\end{proof}

\begin{Rem} \label{Remark on complex structure}
The equation in Theorem \ref{def G_kl to res O_kl} defines a singular variety $O_{k,l}$. 
This is a typical feature of our SYZ construction, which produces a complex variety out of a symplectic manifold: 
we may obtain a singular variety as the SYZ mirror, and we need to take a crepant resolution to get a smooth mirror. 
Since we concern about the complex geometry of this variety, $O_{k,l}$ is not distinguishable from its crepant resolution $\widehat{O_{k,l}}$. 
\end{Rem}


\subsection{SYZ from $\widehat{O_{k,l}}$ to $\widetilde{G_{k,l}}$} \label{sec:OtoG}
The partial compactification $\widehat{O_{k,l}^\sharp}$ of $\widehat{O_{k,l}}$ is a toric Calabi--Yau threefold 
and its SYZ mirror was constructed in \cite{CLL}.  In this section we quote the relevant results, omitting the details. 
First, a crepant resolution $\widehat{O_{k,l}}$ of $\widehat{O_{k,l}}$ corresponds to a maximal triangulation of $\Box_{k,l}$ (Figure \ref{fig:OrbifoldedConifold}(b)). 
We have the Lagrangian torus fibration $\pi:\widehat{O_{k,l}} \rightarrow B:=\R^2\times \R_{\ge0}$ constructed in \cite{Gol,Gro}, whose discriminant locus consists of two components. 
One is the boundary $\partial B$, and the other is topologically given by the dual graph of the maximal triangulation lying in the hyperplane $\{b_3=1\} \subset B$, 
where we denote the coordinates of $B$ by $b = (b_1,b_2,b_3)$. 

The wall is exactly the hyperplane $\{b_3=1\}$ containing one component of the discriminant locus.  It is associated with a wall-crossing factor, which is a polynomial whose coefficients encode the information coming from holomorphic discs with Maslov index $0$. The explicit formula for the coefficients were computed in \cite{CCLT}.  Applying these results, we obtain the following:

\begin{Thm}
The SYZ mirror of the resolved orbifolded conifold $\widehat{O_{k,l}}$ is a deformed generalized conifold $\widetilde{G_{k,l}}$ given as
$$
\Big\{ (U,V,Z,W) \in \C^2 \times (\C^\times)^2 \ \big| \ UV = \sum_{i=0}^k \sum_{j=0}^l  q^{C_{ij}} (1+\delta_{ij}(\check{q})) Z^i W^j \Big\}.
$$
\end{Thm}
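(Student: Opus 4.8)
The plan is to read off the mirror from the general SYZ construction for toric Calabi--Yau threefolds of \cite{CLL}, applied to the partial compactification $\widehat{O_{k,l}^\sharp}$, and then to substitute the explicit wall-crossing coefficients computed by the open mirror theorem of \cite{CCLT}. First I would recall the Gross fibration $\pi:\widehat{O_{k,l}}\to B=\R^2\times\R_{\ge0}$ described above, together with its single wall $H=\{b_3=1\}$, which carries the non-boundary component of the discriminant locus. The wall divides $B_0$ into the two chambers $\{b_3<1\}$ and $\{b_3>1\}$, so the entire problem reduces to (i) identifying the quantum-corrected coordinates and (ii) pinning down the single relation they satisfy.

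For (i), I would split the semi-flat coordinates according to the $T^2$-directions along $B$ and the remaining circle direction. By Lemma \ref{MaslovIndex} every non-constant Maslov-$0$ disc must project into the wall $\{b_3=1\}$, so the two coordinates along the base of the conic fibration---call them $Z$ and $W$, each valued in $\C^\times$---bound no such discs and remain genuine semi-flat coordinates without quantum correction. The circle that degenerates over the boundary, on the other hand, bounds the Maslov-$2$ discs that sweep across the wall, and its generating function (together with its partner) furnishes the pair $U,V\in\C$ on which all the wall-crossing is concentrated.

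For (ii), the heart of the matter is the wall-crossing factor. In the maximal triangulation of $\Box_{k,l}$ underlying the crepant resolution every lattice point $(i,j)$ with $0\le i\le k$, $0\le j\le l$ is a vertex, hence a ray generator $(i,j,1)$ of the fan, hence a toric prime divisor $D_{ij}$ carrying a basic disc class $\beta_{ij}$ whose boundary contributes the monomial $Z^iW^j$. As a fiber crosses $H$, these basic discs cap off against the Maslov-$0$ discs fibred over the wall, and the generating function factors through the polynomial $\sum_{i,j} q^{C_{ij}}(1+\delta_{ij}(\check q))\,Z^iW^j$: the classical weight $q^{C_{ij}}=\mathrm{e}^{-\int_{C_{ij}}\omega}$ records the symplectic area attached to $(i,j)$, while $1+\delta_{ij}(\check q)$ is precisely the generating series of open Gromov--Witten invariants of the Maslov-$0$ discs, whose closed form via the hypergeometric mirror map is supplied by \cite[Theorem 1.4]{CCLT}. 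Assembling $U,V,Z,W$ then yields the relation
\[
UV=\sum_{i=0}^k\sum_{j=0}^l q^{C_{ij}}(1+\delta_{ij}(\check q))\,Z^iW^j,
\]
which is exactly the defining equation of a deformed generalized conifold $\widetilde{G_{k,l}}$.

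The step I expect to be the main obstacle is (ii): one must check that the wall-crossing produces monomials indexed \emph{precisely} by the lattice points of $\Box_{k,l}$, with the coefficients matching \cite{CCLT} term by term, including the vanishing of $\delta_{ij}$ at the corner points and its nontrivial form at the interior points dictated by the chosen triangulation. Because both the toric SYZ construction of \cite{CLL} and the coefficient formula of \cite{CCLT} are already available, this is bookkeeping of disc classes against the triangulation rather than new Floer-theoretic analysis; accordingly I would present the argument by specializing those two results and verifying the indexing, as the section indeed proposes to do by quotation.
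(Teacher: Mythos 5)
Your proposal follows essentially the same route as the paper, which likewise establishes this theorem by quoting the toric Calabi--Yau SYZ construction of \cite{CLL} for the Gross fibration with its single wall $\{b_3=1\}$ and two chambers, and then substituting the coefficients $1+\delta_{ij}(\check{q})=\exp g_{ij}(\check{q})$ supplied by the open mirror theorem of \cite{CCLT}, with the monomials indexed by the lattice points of $\Box_{k,l}$ exactly as you describe. Two minor wording caveats that do not affect the argument: $1+\delta_{ij}$ is the generating series of the Maslov-index-two open invariants $n_{\beta_{ij}+\alpha}$ (Maslov-zero discs enter only through the sphere-bubble corrections $\alpha$), and $\delta_{ij}$ is generically nontrivial at non-corner boundary lattice points as well, not only at interior ones.
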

The notations are explained as follows.  Let $\beta_{ij}$ be the basic disc class corresponding to the toric divisor $D_{ij} \subset \widehat{O_{k,l}}$. 
Then $C_{ij}$ denotes the curve class $\beta_{ij} - i (\beta_{10} - \beta_{00}) - j (\beta_{01} - \beta_{00}) - \beta_{00}$. 
The coefficients $1+\delta_{ij}(\check{q})$ is given by $\exp(g_{ij}(\check{q}))$
where
$$
g_{ij}(\check{q}):=\sum_{d}\frac{(-1)^{(D_{ij}\cdot d)}(-(D_{ij}\cdot d)-1)!}{\prod_{(a,b) \neq (i,j)} (D_{ab}\cdot d)!}\check{q}^d,
$$
and the summation is over all effective curve classes $d\in H_2^\text{eff}(O_{k,l})$ satisfying $D_{ij}\cdot d<0 \text{ and } D_p\cdot d \geq 0 \text{ for all } p\neq (i,j)$. 
Lastly $q$ and $\check{q}$ are related by the mirror map:
$$ q^C = \check{q}^C \exp \Big(- \sum_{i,j} (D_{ij} \cdot C) g_{ij}(\check{q}) \Big).$$


It is worth noting that the above SYZ mirror manifold can be identified with the Hori--Iqbal--Vafa mirror manifold \cite{HIV}. 
The former has the advantage that it is intrinsically expressed in terms of flat coordinates and contains the information about certain open Gromov--Witten invariants. 


\subsection{SYZ from $\widetilde{O_{k,l}}$ to $\widehat{G_{k,l}}$} \label{smOtoG}
Recall that the fan polytope of the orbifolded conifold $O_{k,l}^\sharp$ is the cone over a rectangle $[0,k] \times [0,l]$. 
Smoothings of $\widetilde{O_{k,l}}$ correspond to the Minkowski decompositions of $[0,k] \times [0,l]$ into $k$ copies of $[0,1] \times \{0\}$ and $l$ copies of $\{0\} \times [0,1]$ \cite{Alt}. 
The SYZ mirrors for such smoothings were constructed in \cite{Lau}.  
Here we can write down the Lagrangian fibration more explicitly by realizing $\widetilde{O_{k,l}}$ as a double conic fibration.
Recall that the deformed orbifolded conifold $\widetilde{O_{k,l}}$ is defined as 
$$
\widetilde{O_{k,l}}=\{(u_1,u_2,v_1,v_2,z) \in \C^4 \times \C^\times \ | \  u_1v_1=f(z), \ u_2v_2=g(z)\}$$
where $f(z)$ and $g(z)$ are generic polynomials of degree $k$ and $l$ respectively.  
We assume that all roots $r_i$ and $s_j$ of $f(z)$ and $g(z)$ respectively are distinct and non-zero. 
Moreover, we can naturally compactly $\widetilde{O_{k,l}}$ in $(\PP^2)^2 \times \PP^1$ to obtain $\widetilde{O_{k,l}}^* \subset (\PP^2)^2 \times \PP^1$ (where $(u_1,u_2)$ and $(v_1,v_2)$ above become inhomogeneous coordinates of the two $\PP^2$ factors.).  
There is also a natural Hamiltonian $T^2$-action on $\widetilde{O_{k,l}}^*$ given by, for $(s,t) \in T^2 \subset \C^2$ 
$$
(s,t)\cdot (u_1,v_1,u_2,v_2,z):=(su_1,s^{-1}v_1,tu_2,t^{-1}v_2,z).
$$
On the other hand, $\widetilde{O_{k,l}}^*$ admits a double conic fibration $\pi_z:\widetilde{O_{k,l}}^*\rightarrow \PP^1$ by the projection to the $z$-coordinate (Figure \ref{fig:ConicBundle}).  
\begin{figure}[htbp]
 \begin{center} 
  \includegraphics[width=70mm]{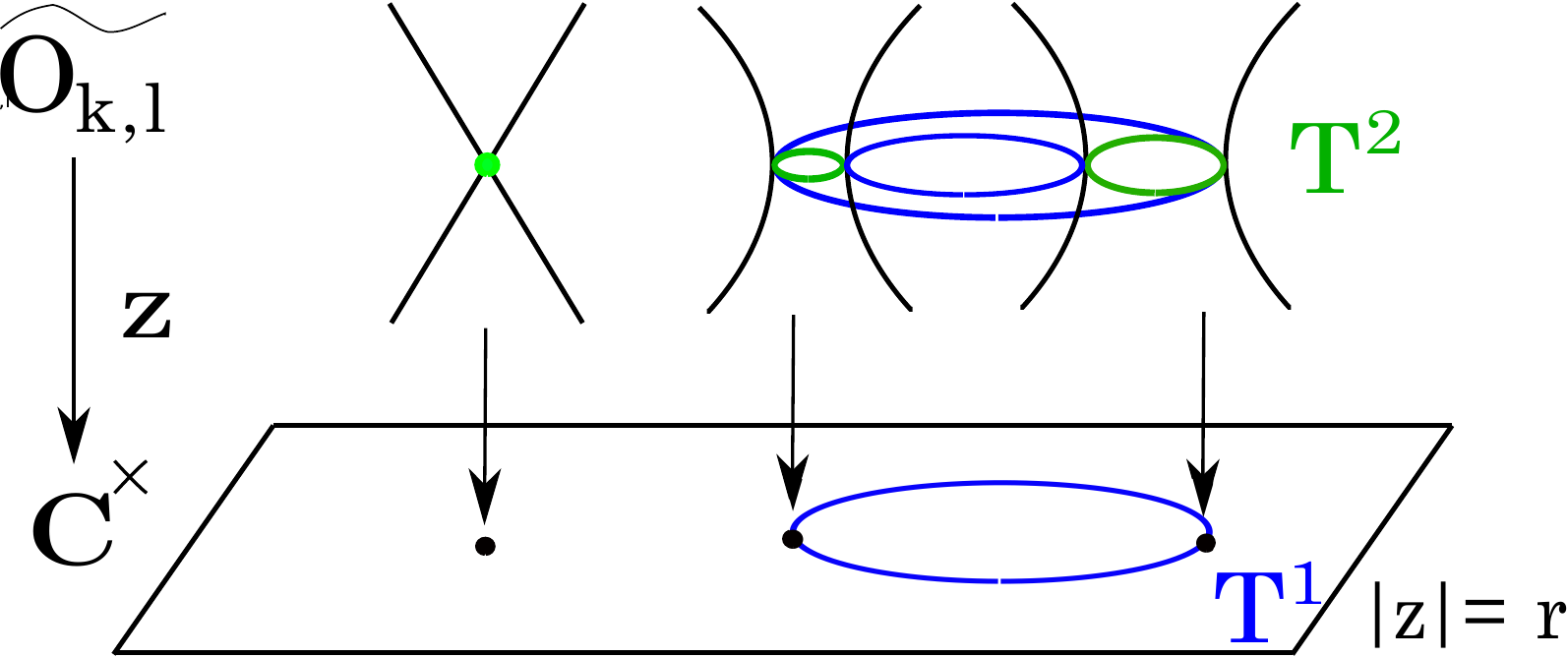}
 \end{center}
 \caption{Double conic fibration}
\label{fig:ConicBundle}
\end{figure}
In this situation, the base $\PP^1$ of the fibration can be identified as the symplectic reduction of $\widetilde{O_{k,l}}^*$ by the Hamiltonian $T^2$-action.  
As is discussed in \cite{Gro}, the Lagrangian fibration $|z|: \PP^1 \to [0,\infty]$ gives rise to the Lagrangian torus fibration $\pi:\widetilde{O_{k,l}}^* \rightarrow B:=[-\infty,\infty]^2 \times [0,\infty]$ given by 
$$
\pi(u_1,v_1,u_2,v_2,z)=(\frac{1}{2}(|u_1|^2-|v_1|^2),\frac{1}{2}(|u_2|^2-|v_2|^2),|z|).
$$
The map to the first two coordinates is the moment map of the Hamiltonian $T^2$-action. 
We denote the coordinates of $B$ by $b = (b_1,b_2,b_3)$. 

\begin{Prop} \label{DiscDefOrbConi}
\begin{enumerate}
\item The discriminant locus of the fibration $\pi$ is given by the disjoint union 
$\partial B \cup \left(\bigcup_{i=1}^k \{b_1=0,b_3=|r_i|\}\right) \cup \left(\bigcup_{j=1}^l \{b_2=0,b_3=|s_j|\}\right) \subset B$. 
\item The fibration $\pi$ is special with respect to the nowhere-vanishing meromorphic volume form $\Omega:=du_1 \wedge du_2 \wedge d\log z$ on $\widetilde{O_{k,l}}^*$. 
\end{enumerate}
\end{Prop}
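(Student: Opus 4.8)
Both assertions are local statements about the torus fibration $\pi=(\mu_1,\mu_2,|z|)$, where $(\mu_1,\mu_2)=\left(\tfrac12(|u_1|^2-|v_1|^2),\tfrac12(|u_2|^2-|v_2|^2)\right)$ is the moment map of the Hamiltonian $T^2$-action and $|z|$ is pulled back from the symplectic quotient $\PP^1$. I would view $\pi$ as the double conic bundle $\pi_z$ composed with the fibration $|z|\colon\PP^1\to[0,\infty]$ on the quotient, following Gross \cite{Gro} and Goldstein \cite{Gol}, exactly as in the proof given above for $\widehat{G_{k,l}}^*$. Part (1) is then a degeneration analysis of the $T^3$-fibers, and part (2) follows from symplectic reduction together with Goldstein's lemma.

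\textbf{Part (1).} A regular fiber of $\pi$ is a $3$-torus swept out by the two orbit circles of the $T^2$-action (the phases of $u_1$ and $u_2$) together with the circle $\{|z|=b_3\}$ of $\arg z$. I would locate the discriminant by asking when this $T^3$ degenerates. Over $\partial B$ the degeneration is the usual one: $b_3\in\{0,\infty\}$ collapses the $z$-circle or meets the compactifying divisors of $\PP^1$, while $b_1$ or $b_2\in\{\pm\infty\}$ lands on the boundary $\PP^2$-divisors; all of these lie in $\partial B$. In the interior, fix $b_3$ and examine the circle $\{|z|=b_3\}$: the first conic $u_1v_1=f(z)$ is smooth unless this circle meets a root of $f$, i.e.\ unless $b_3=|r_i|$, in which case $f$ vanishes at $z=r_i$ and the conic becomes nodal. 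The orbit circle of the first $T^1$-factor then shrinks to a point exactly at the node $u_1=v_1=0$, which forces $\mu_1=b_1=0$; hence the fiber is singular precisely over $\{b_1=0,\,b_3=|r_i|\}$. The same analysis applied to $u_2v_2=g(z)$ gives $\{b_2=0,\,b_3=|s_j|\}$. Conversely, away from these loci the relevant $T^1$ acts freely on the (smooth or nodal) conic and the $z$-circle avoids every root, so the fiber is a smooth $T^3$. Genericity of $f,g$ (distinct nonzero roots, with the $|r_i|$ and $|s_j|$ pairwise distinct and $|r_i|\ne|s_j|$) makes the listed lines pairwise disjoint, giving the disjoint union as stated.

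\textbf{Part (2).} The fibers of $\pi$ are Lagrangian because they are the preimages, under the reduction, of the $|z|$-level circles in $\PP^1$: over a regular value the $T^2$-orbits are isotropic level sets of the moment map and $\{|z|=b_3\}$ is Lagrangian in the reduced $\PP^1$, so Gross's construction \cite{Gro} makes the total preimage Lagrangian. For the special condition I would compute the reduced holomorphic volume form on $\PP^1$ as the contraction $\iota_{\xi_2}\iota_{\xi_1}\Omega$ of $\Omega$ by the two vector fields $\xi_1,\xi_2$ generating the $T^2$-action, restricted to $\mu^{-1}(b_1,b_2)$ and descended to the quotient. As in the proof for $\widehat{G_{k,l}}^*$, contracting away the two torus directions leaves a holomorphic volume form on the $z$-line proportional to $d\log z$. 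Since $\mathrm{Re}(d\log z)=d\log|z|$ vanishes on $\{|z|=b_3\}$, these circles are special Lagrangian in $\PP^1$; Goldstein's Lemma 2 \cite{Gol} then lifts this to the conclusion that every fiber of $\pi$ is special Lagrangian with respect to $\Omega$.

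\textbf{Main obstacle.} The delicate point is entirely in Part (2): one must check that the contracted form $\iota_{\xi_2}\iota_{\xi_1}\Omega$ genuinely descends to the quotient and is proportional to $d\log z$. This is where the two conic relations $u_1v_1=f(z)$ and $u_2v_2=g(z)$ must be used, and the cleanest route is to carry out the contraction in the logarithmic gauge adapted to the $T^2$-action, in which the two torus directions are stripped off cleanly and the surviving form on the base is exactly $d\log z$; verifying that this is the correct reduction of the given meromorphic volume form, and that the induced (non-standard) reduced symplectic form on $\PP^1$ is rotationally symmetric so that the $|z|$-circles are simultaneously Lagrangian and calibrated, is the crux of the argument. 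A secondary, purely bookkeeping, issue is confirming the genericity hypotheses on $f,g$ that guarantee the discriminant components in Part (1) are disjoint.
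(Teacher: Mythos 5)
Your proof is correct and takes essentially the same approach as the paper: for part (1) the paper argues identically that over $\partial B$ the fibers degenerate, that away from $z=0$ the map $z \mapsto |z|$ is a submersion, and that the interior discriminant is therefore the image of the stabilizer loci $u_1=v_1=0$ (forcing $f(z)=0$, hence $b_1=0$, $b_3=|r_i|$) and $u_2=v_2=0$ (forcing $g(z)=0$), which is exactly your nodal-conic analysis in moment-map language. For part (2) the paper's proof is in fact silent, and your symplectic-reduction/Goldstein argument is precisely the technique the authors used for the analogous fibration on $\widehat{G_{k,l}}^*$; moreover your instinct to pass to the logarithmic gauge correctly resolves the crux you flag, since contracting the literal $du_1 \wedge du_2 \wedge d\log z$ by the two torus vector fields yields $-u_1u_2\, d\log z$ with a non-$T^2$-invariant factor, and the constant-phase special condition genuinely holds for the residue form $d\log u_1 \wedge d\log u_2 \wedge d\log z$, which is the honest nowhere-vanishing trivialization of the canonical bundle here.
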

\begin{proof}
The fibration has tori $T^3$ as generic fibers.  Over $\partial B$ where $z=0$, the fibers degenerate to $T^2$.  Thus $\partial B$ is a component of the discriminant locus.  Away from $z=0$, the map $z \to |z|$ is a submersion.  Hence the discriminant locus of the fibration $\pi$ comes from that of the moment map of the Hamiltonian $T^2$-action.  This action has non-trivial stabilizers at $u_1=v_1=0$ or $u_2=v_2=0$, which implies $f(z)=0$ or $g(z)=0$ respectively.  
Their images under $\pi$ are $\{b_1=0,b_3=|r_i|\}$ or $\{b_2=0,b_3=|s_j|\}$ respectively.
\end{proof}

\begin{Prop} \label{WallDefOrbConi}
A regular fiber of the fibration $\pi$ bounds a holomorphic disc of Maslov index $0$ only when $b_3 = |r_i|$ or $b_3 = |s_j|$. 
Thus the wall $H$ of the fibration $\pi$ is $H= \left(\bigcup_{i=1}^k \{b_3 =|r_i|\}\right) \cup \left(\bigcup_{j=1}^l \{b_3 =|s_j|\}\right) \subset B$. 
\end{Prop}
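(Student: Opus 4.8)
The plan is to combine the Maslov-index formula of Lemma~\ref{MaslovIndex} with the double-conic-fibration structure and the maximum principle, in the spirit of the wall computation already carried out for $\widehat{G_{k,l}}$ and of \cite{Aur,AAK}. By Lemma~\ref{MaslovIndex} a non-constant holomorphic disc $u$ bounded by a regular fiber $F_b$ has $\mu=2\,D\cdot[u]$, so Maslov index $0$ is equivalent to $[u]\cdot D=0$, i.e.\ $u$ avoids the pole divisor $D$ of $\Omega=du_1\wedge du_2\wedge d\log z$. First I would pin down $D$: as $\pi(D)=\partial B$ and $D$ is anti-canonical, it consists of $\{z=0\}$ and $\{z=\infty\}$ (the poles of $d\log z$, mapping to $b_3=0,\infty$) together with the lines at infinity of the two $\PP^2$-factors (mapping to the boundary faces $b_1=\pm\infty$ and $b_2=\pm\infty$). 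Since $u$ avoids $\{z=0\}\cup\{z=\infty\}$, the map $z\circ u\colon\mathbb{D}\to\C^\times$ is holomorphic with $|z\circ u|\equiv b_3$ on $\partial\mathbb{D}$ (because $\partial u\subset F_b$); choosing a holomorphic branch of $\log(z\circ u)$ on the simply connected disc, its harmonic real part is constant on the boundary, hence identically $\log b_3$ by the maximum principle, so $z\circ u\equiv z_0$ for some $z_0$ with $|z_0|=b_3$. Thus $u$ lies in the single fibre $\pi_z^{-1}(z_0)=C_1\times C_2$ of the conic bundle, where $C_1=\{u_1v_1=f(z_0)\}$ and $C_2=\{u_2v_2=g(z_0)\}$.

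The decisive dichotomy is whether this fibre is smooth. If $f(z_0)\neq0$ and $g(z_0)\neq0$, each $C_p\cong\PP^1$ is a smooth conic whose two points at infinity, corresponding to $u_p\to0$ and $u_p\to\infty$, lie in $D$; as $u$ avoids $D$, the coordinates $u_1\circ u,u_2\circ u$ are genuine $\C^\times$-valued holomorphic functions. On a smooth conic $|u_p||v_p|=|f(z_0)|$ (resp.\ $|g(z_0)|$) is fixed, so the fibre equation $\tfrac12(|u_p|^2-|v_p|^2)=b_p$ pins down $|u_p|$; hence $|u_1\circ u|$ and $|u_2\circ u|$ are constant on $\partial\mathbb{D}$, and the same maximum-principle argument forces $u_1\circ u$ and $u_2\circ u$ to be constant, so $u$ is constant, a contradiction. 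Consequently a non-constant Maslov-index-$0$ disc forces $f(z_0)=0$ or $g(z_0)=0$, i.e.\ $z_0=r_i$ or $z_0=s_j$, so $b_3=|z_0|\in\{|r_i|\}\cup\{|s_j|\}$. This is the asserted ``only when'', i.e.\ the inclusion $H\subseteq\bigcup_i\{b_3=|r_i|\}\cup\bigcup_j\{b_3=|s_j|\}$.

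For equality I would exhibit the discs over the degenerate fibres. Over $z_0=r_i$ the conic $C_1$ breaks into $\{u_1=0\}\cup\{v_1=0\}$, and for every $b_1\neq0$ the circle $F_b\cap C_1$ lies on one of these lines (say $|u_1|=\sqrt{2b_1}$ on $\{v_1=0\}$ when $b_1>0$) and bounds the holomorphic thimble $\{z=r_i,\ v_1=0,\ |u_1|\le\sqrt{2b_1}\}\times\{\mathrm{pt}\}$, where the point is chosen on the circle $F_b\cap C_2$. This disc has finite coordinates and $z=r_i\neq0,\infty$, so it meets no component of $D$ and therefore has Maslov index $0$ by Lemma~\ref{MaslovIndex}; the statement over $z_0=s_j$ is symmetric. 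Hence every regular fibre with $b_3=|r_i|$ or $b_3=|s_j|$ bounds a non-constant Maslov-index-$0$ disc, giving the reverse inclusion and the stated description of the wall $H$.

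I expect the one genuinely delicate point to be the first step: correctly identifying the prime components of the anti-canonical pole divisor $D$ inside the compactification $\widetilde{O_{k,l}}^*\subset(\PP^2)^2\times\PP^1$, so that a Maslov-index-$0$ disc may legitimately be shown to avoid both $\{z=0,\infty\}$ and the $\PP^2$-lines at infinity. Once $D$ is correctly described, the two applications of the maximum principle are routine, and the existence and Maslov index of the thimble discs over the nodal fibres are immediate.
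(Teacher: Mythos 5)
Your proof is correct and follows essentially the same route as the paper, which likewise reduces everything to the singular fibers of the double conic fibration $\pi_z$ via Lemma~\ref{MaslovIndex}. The paper's own argument is only a two-line sketch that asserts the ``only when'' direction, whereas you supply the missing details (the maximum-principle argument forcing a Maslov-index-$0$ disc into a single, necessarily singular, fiber of $\pi_z$, and the explicit thimbles over $z_0=r_i$, $s_j$), all of which check out against the compactification and the divisor $D$ as described in Proposition~\ref{DiscDefOrbConi}.
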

\begin{proof}
A singular fiber of the double conic fibration $\pi_z:\widetilde{O_{k,l}}^*\rightarrow \PP^1$ bounds a holomorphic disc, 
which has Maslov index $0$ by Lemma \ref{MaslovIndex}, 
and this happened only when $b_3 = |r_i|$ or $b_3 = |s_j|$. 
\end{proof}

\begin{figure}[htbp]
 \begin{center} 
  \includegraphics[width=35mm]{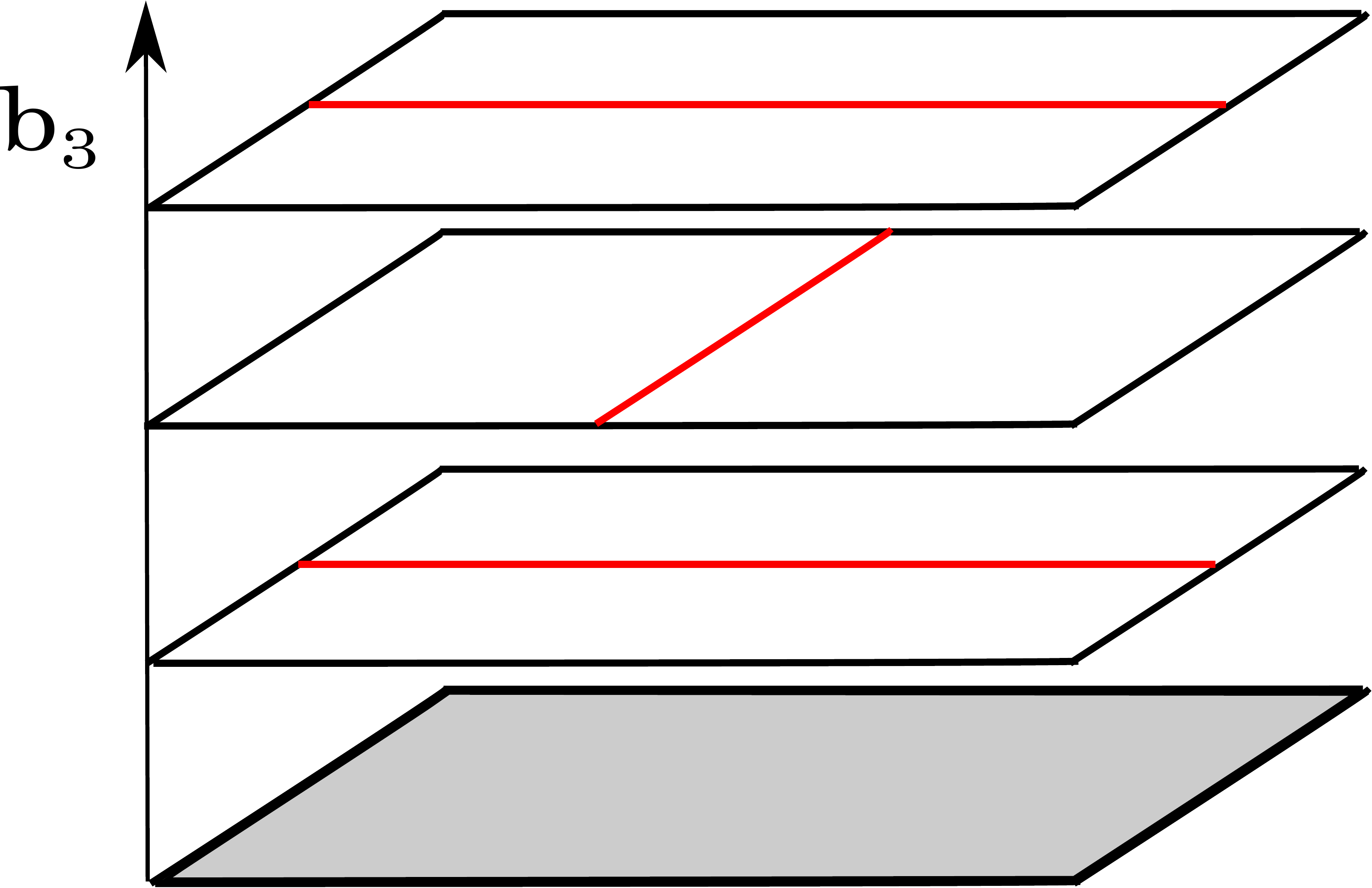}
 \end{center}
 \caption{Walls $(k,l)=(2,1)$}
\label{fig:Wall}
\end{figure}

The wall components $\{b_3 =|r_i|\}$ and $\{b_3 =|s_j|\}$ correspond to the pieces $[0,1]\times\{0\}$ and $\{0\}\times[0,1]$ of the Minkowski decomposition respectively. 
Wall-crosing of open Gromov--Witten invariants in this case has essentially been studied in \cite{Lau} in details, and we will not repeat the details here.  The key result is that each wall component contributes a linear factor: each component $\{b_3 =|r_i|\}$ contributes $1+X$, and each component $\{b_3 =|s_j|\}$ contributes $1+Y$. 
The SYZ mirror is essentially the product of all these factors, namely, we obtain the following. 

\begin{Thm} \label{def O_kl to res G_kl}
The SYZ mirror of $\widetilde{O_{k,l}}$ is given by 
$$
UV=(1+X)^k(1+Y)^l
$$
for $U, V \in \C$ and $X, Y \in \C^\times$, which is the punctured generalized conifold $G_{k,l}$.
\end{Thm}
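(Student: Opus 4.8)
The plan is to carry out the three-step SYZ recipe of Section~\ref{Preliminary} for the special Lagrangian fibration $\pi\colon\widetilde{O_{k,l}}^*\to B=[-\infty,\infty]^2\times[0,\infty]$, whose discriminant locus and wall were already determined in Propositions~\ref{DiscDefOrbConi} and~\ref{WallDefOrbConi}. The first step is to organise the semi-flat coordinates according to the double conic structure. The $T^3$-fibre has three circle directions: the two angles $\theta_1,\theta_2$ of the Hamiltonian $T^2$-action rotating $(u_1,v_1)$ and $(u_2,v_2)$, and the angle $\theta_3=\arg z$ of the base $\PP^1$. The semi-flat coordinates dual to $\theta_1$ and $\theta_2$ I will call $X$ and $Y$; since the wall $H$ lies entirely in the $b_3$-direction, moving in the moment-map directions $b_1,b_2$ never meets $H$, so $X$ and $Y$ require no quantum correction and lie in $\C^\times$. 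For the $z$-direction I take the two basic disc classes $\beta_{\pm}$ whose boundaries are $\pm\theta_3$, running toward $z=\infty$ and $z=0$ respectively; the generating functions $U$ and $V$ of the boundary divisors they meet are the quantum-corrected coordinates, which may vanish and hence lie in $\C$. This is precisely the bookkeeping in the proof of Theorem~\ref{def G_kl to res O_kl}, with the roles of the base and fibre of the conic fibration interchanged.

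The heart of the matter is the wall-crossing analysis, which I would import from \cite{Lau} (the analogue of the conic-bundle computation of \cite{AAK}). By Lemma~\ref{MaslovIndex} a regular fibre bounds a Maslov-index-$0$ disc only over $H$, and by Proposition~\ref{WallDefOrbConi} one has $H=\bigcup_{i=1}^k\{b_3=|r_i|\}\cup\bigcup_{j=1}^l\{b_3=|s_j|\}$. Over a component $\{b_3=|r_i|\}$ the sole new Maslov-$0$ disc is the Lefschetz thimble of the \emph{first} conic $u_1v_1=f(z)$ vanishing at $z=r_i$, whose boundary wraps the $\theta_1$-circle; continuing the class $\beta_-$ across this wall therefore multiplies its generating function by the factor $1+X$. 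Symmetrically, each component $\{b_3=|s_j|\}$ arises from the second conic $u_2v_2=g(z)$ and contributes the factor $1+Y$. The key computation is to confirm that these wall-crossing factors are exactly the linear expressions $1+X$ and $1+Y$, with the symplectic areas of the successive thimbles absorbed into the coordinates, and that no further disc classes contribute; this is precisely the content of the results of \cite{Lau}, which I would quote.

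With the wall-crossing factors in hand, the mirror is assembled by gluing the chambers along $b_3$. In the absence of walls the classes $\beta_+$ and $\beta_-$ would sum to the sphere class of the base $\PP^1$, so the product $\tilde{Z}_{\beta_+}\tilde{Z}_{\beta_-}$ would be the constant $\mathrm{e}^{-\int_{\PP^1}\omega}$. Running $b_3$ from $0$ to $\infty$, however, one crosses all $k$ walls $\{b_3=|r_i|\}$ and all $l$ walls $\{b_3=|s_j|\}$, and each contributes its factor to the gluing, exactly as $U_1$ accumulated the powers $(1+Z)^i$ in Theorem~\ref{def G_kl to res O_kl}. Hence the product is corrected to $UV=(1+X)^k(1+Y)^l$, and the subring generated by $U,V,X^{\pm1},Y^{\pm1}$ is $\C[U,V,X^{\pm1},Y^{\pm1}]/(UV-(1+X)^k(1+Y)^l)$. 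This is the punctured generalized conifold $G_{k,l}$, equivalently (as a complex variety, cf.\ Remark~\ref{Remark on complex structure}) its crepant resolution $\widehat{G_{k,l}}$.

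The main obstacle is the middle step: showing that each wall component contributes precisely a clean linear factor and that the $k+l$ factors multiply to the full conic relation with no additional correction terms. This rests on the explicit open Gromov--Witten/wall-crossing computation for conic degenerations, together with the verification that the thimble areas can be uniformly reabsorbed so that all first-conic walls share the factor $1+X$ (and likewise $1+Y$). I would handle this by citing \cite{Lau} rather than redoing it, and note, as in Remark~\ref{DifferentCrepantResolutions}, that different choices of $f,g$ alter only the mirror map and not the form of the defining equation.
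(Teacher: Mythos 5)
Your proposal follows essentially the same route as the paper: set up the double conic fibration and its Hamiltonian $T^2$-action, use Propositions \ref{DiscDefOrbConi} and \ref{WallDefOrbConi} to locate the discriminant locus and the wall $H=\bigcup_i\{b_3=|r_i|\}\cup\bigcup_j\{b_3=|s_j|\}$, import the wall-crossing computation from \cite{Lau} to get a clean linear factor $1+X$ (resp.\ $1+Y$) per wall component, and multiply the $k+l$ factors to obtain $UV=(1+X)^k(1+Y)^l$, with the singular-versus-resolved issue handled as in Remark \ref{Remark on complex structure}. The paper's own argument is exactly this (in fact terser, deferring the wall-crossing entirely to \cite{Lau}), so your additional bookkeeping of the semi-flat coordinates and of the product $\tilde{Z}_{\beta_+}\tilde{Z}_{\beta_-}$ is a faithful elaboration rather than a different method.
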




An almost the same remark as Remark \ref{Remark on complex structure} applies to Theorem \ref{def O_kl to res G_kl} and thus we confirm the SYZ construction in this case. 



\section{Global geometric transitions: some speculations} \label{Discussion} 
We are now in position to turn to the global case. 
Let $\widehat{X}$ and $\widetilde{X}$ be compact Calabi--Yau threefolds. 
We call a geometric transition $\widehat{X} \dashrightarrow  X \rightsquigarrow \widetilde{X}$ a generalized conifold transition if 
$X$ has only generalized conifolds and orbifolded conifolds. 
The birational contraction appearing in the geometric transition of compact Calabi--Yau threefolds 
can be factorized into a sequence of primitive contractions of type I, type II and type III \cite{Ros}. 
In general, type I and type III appear in the geometric transition of $G_{k,l}$ and all types appear in the geometric transition of $O_{k,l}$. 

Motivated by the local case, we are tempted to propose that {\it generalized conifold transitions are reversed under mirror symmetry}. 
However, this naive conjecture does not hold because some global conifold transitions are mirror to hyperconifold transitions, which are not generalized conifold transitions \cite{Dav}.  We expect that a generalized conifold transition is mirror to a reversed generalized conifold transition if the Calabi--Yau threefold has a Lagrangian torus fibration and the transition is locally modeled by those given in Section \ref{SYZ}.



\begin{Ex}[Schoen's CY threefold and its mirror \cite{Sch, Lau2}]
Using the methods of Castano-Bernard and Matessi \cite{CM1,CM} in the Gross--Siebert program, generalized conifold transitions and their mirrors are studied for the Schoen's Calabi--Yau threefold in \cite{Lau2}.  
The threefold is a resolution of the fiber product of two rational elliptic surfaces over the base $\bP^1$ \cite{Sch}.  
It gives a global manifestation that orbifolded conifolds and generalized conifolds are mirror to each other.

For each pair of reflexive polygons $(P_1,P_2)$ (\emph{where $P_1$ and $P_2$ are not necessarily dual to each other}), we have an orbifolded conifold degeneration $O^{(P_1,P_2)}$ of a Schoen's Calabi--Yau threefold, and a generalized conifold degeneration $G^{(P_1,P_2)}$ of its mirror (in the sense of Legendre transform in Gross--Siebert program \cite{GS}).  A resolution of $O^{(P_1,P_2)}$ is mirror to a smoothing of $G^{(\check{P}_1,\check{P}_2)}$, and vice versa.  $\check{P}$ denotes the dual polygon of $P$ (see Figure \ref{fig:SchoenCY} for an example). 
We refer the reader to the paper \cite{Lau2} for more details. 

\end{Ex}

\begin{figure}[htb!]
	\centering
	\begin{subfigure}[b]{0.45\textwidth}
		\centering
		\includegraphics[scale=0.4]{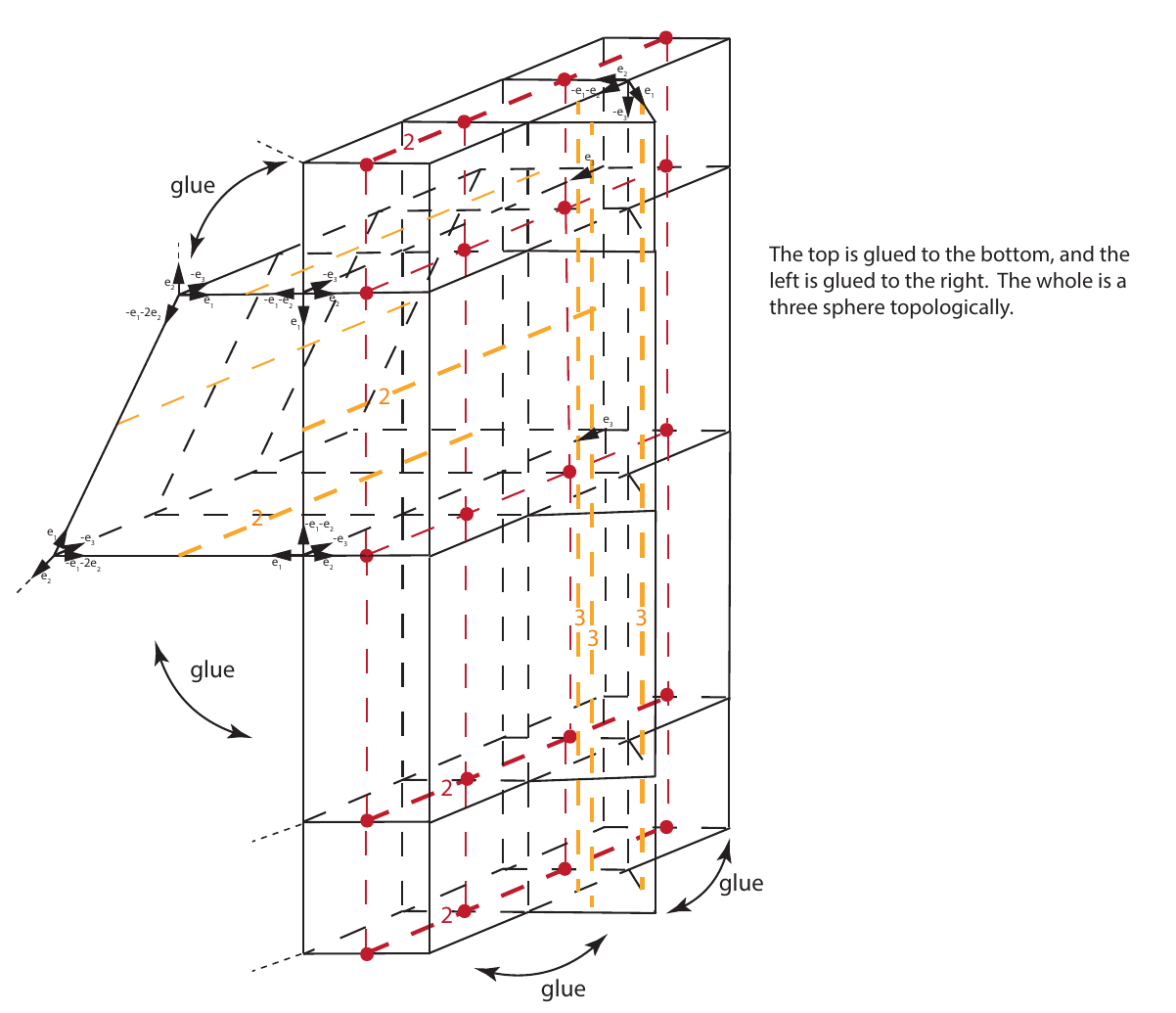}
		\caption{}
	\end{subfigure}
	\hspace{10pt}
	\begin{subfigure}[b]{0.45\textwidth}
		\centering
		\includegraphics[scale=0.4]{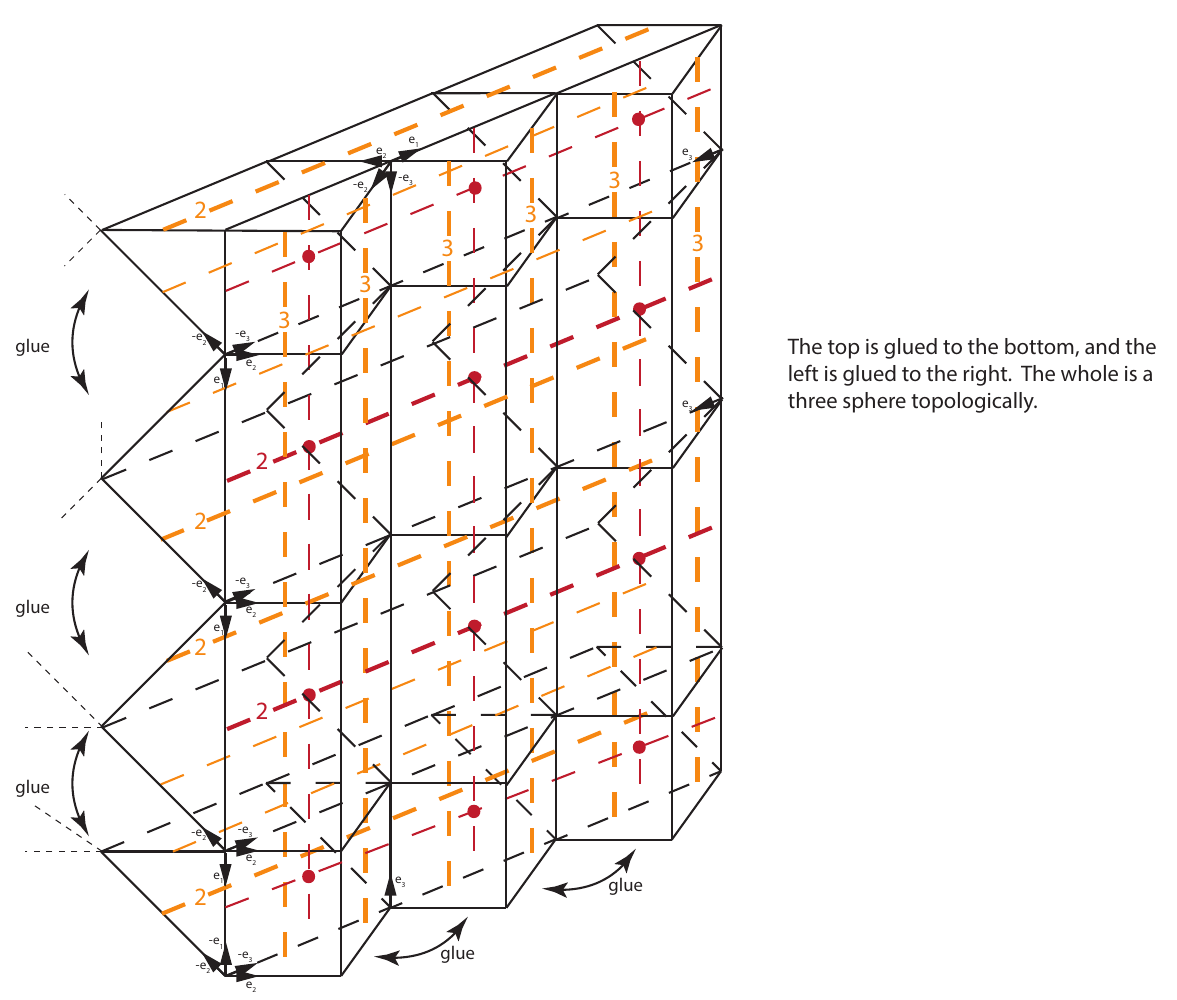}
		\caption{}
	\end{subfigure}
	\caption{Base of Lagrangian fibrations on degenerations of a Schoen's Calabi--Yau threefold.  Topologically they are $\bS^3$ and the figures show polyhedral decompositions which are useful to describe the affine structures.  The left shows an orbifolded conifold degeneration of Schoen's Calabi--Yau threefold.  The right shows its mirror.  Each thick dot represents an orbifolded conifold singularity on the left, and a generalized conifold singularity on the right.  There are 24 orbifold singularities counted with multiplicities.}
	\label{fig:SchoenCY}
\end{figure}

\begin{Ex}[Quintic threefold and its mirror]
We set $(k,l)=(4,1)$ or $(3,2)$ in the following. 
Let $X \subset \PP^4$ be the singular quintic threefold defined by
$$
x_0f(x_0,\dots,x_4)+x_1^kx_2^l=0,
$$
where $f(x)$ is a generic homogeneous polynomial of degree $4$.  
The singular locus of $X$ consists of 2 curves 
$$
\{x_0=x_1=f(x)=0\} \cup \{x_0=x_2=f(x)=0\} \subset X
$$
of genus $3$ intersecting at $4$ points. 
The quintic threefold $X$ has $G_{k,l}$ around the each intersection point. 
Successively blowing up $X$ along the two curves followed by the blow-up along the divisor $\{x_0=x_1=0\}$, we obtain a projective crepant resolution $\widehat{X}$ of $X$. 
Thus a quintic threefold admits a generalized conifold transition. 

On the other hand, the mirror quintic of a quintic threefold is defined as a crepant resolution of the orbifold 
$$
Y_{\phi}:=\Big\{\sum_{i=0}^4x_i^5+\phi\prod_{i=0}^4x_i=0\Big\}/G, \ \ \ G:=\Big\{(a_i)\in (\Z_5)^5 \ \big| \ \sum_{i=0}^4a_i=0 \Big\}/\Z_5
$$
for $\phi \in \C$. 
The orbifold $Y_{\phi}$ has $A_4$-singularities along 10 curves $C_{ij}=\{x_i=x_j=0\}/G \cong \PP^1, \ \ \ (0 \le i<j\le 4)$. 
We can partially resolve $Y_{\phi}$ to obtain $Y$ 
whose singular locus consists of $A_k$-singularities along $C_{01}$ and $A_l$-singularities along $C_{02}$ such that $Y$ has $O_{k,l}$ around $C_{01}\cap C_{02}$ (Figure \ref{fig:ToricSing}). 
\begin{figure}[htbp]
 \begin{center} 
  \includegraphics[width=50mm]{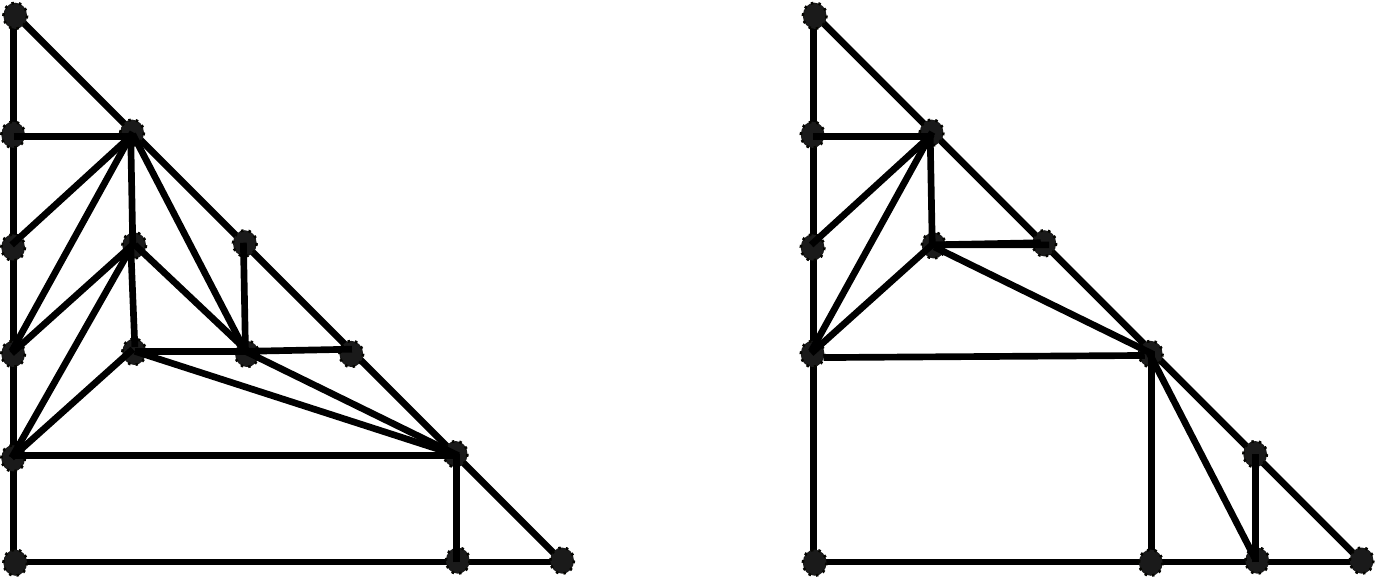}
 \end{center}
 \caption{$2$-dimensional faces of the polytope for $O_{4,1}$ and $O_{3,2}$} 
\label{fig:ToricSing}
\end{figure}
It is interesting to ask whether or not $Y$ admits any smoothing. 

Although $X$ and $Y$ lie in the boundaries of the complex moduli space of the quintic and the K\"ahler moduli space of the mirror quintic respectively, 
we do not know whether or not they correspond each other under the mirror correspondence. 
This may be seen by the monomial-divisor correspondence in toric geometry, 
but it is possible that the mirror of $X$ is a non-toric blow-down of the mirror quintic. 

It is straightforward to generalize this type of constructions to Calabi--Yau hypersurfaces in 4-dimensional weighted projective spaces.
\end{Ex}

We may also consider existence of generalized conifold transitions for compact Calabi--Yau geometries.
Let $\widehat{X}$ be a smooth threefold and $C_1,\dots, C_n$ be $(-1,-1)$-curves in $\widehat{X}$. 
Let $X$ be their contraction and $\widetilde{X}$ be a smoothing of $X$. 
Small resolutions and deformations always exist topologically, 
but there are obstructions if we wish to preserve either the complex or symplectic structure: 
\begin{Thm}[Friedman \cite{Fri}, Tian \cite{Tia}] \label{Friedman-Tian}
Assume that $X$ satisfies the $\partial \bar\partial$-lemma (for example K\"ahler). 
Then a smoothing $\widetilde{X}$ to exist if and only if there is a relation 
$
\sum_{i=1}^n \lambda_i [C_i]=0 \ (\lambda_i \ne 0 \ \forall i)
$
in $H_2(X,\Q)$. 
\end{Thm}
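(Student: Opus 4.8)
The plan is to recast "existence of a smoothing" as a statement in the deformation theory of the nodal variety $X$ and then to match the resulting linear-algebra condition with the homological relation among the $[C_i]$. Since the $(-1,-1)$-curves $C_1,\dots,C_n$ contract to ordinary double points $p_1,\dots,p_n$ on $X$, I would start from the local-to-global exact sequence of the cotangent complex,
\[ H^1(X,\Theta_X)\longrightarrow T^1_X \longrightarrow H^0(X,\mathcal{E}xt^1(\Omega_X,\mathcal{O}_X)) \xrightarrow{\ \mathrm{ob}\ } H^2(X,\Theta_X), \]
where $T^1_X=\mathrm{Ext}^1(\Omega_X,\mathcal{O}_X)$ is the space of first-order deformations and $\mathcal{E}xt^1(\Omega_X,\mathcal{O}_X)$ is the local cotangent sheaf. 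For an ordinary double point this sheaf is a one-dimensional skyscraper, so $H^0(X,\mathcal{E}xt^1(\Omega_X,\mathcal{O}_X))\cong\C^n$ with one coordinate $t_i$ per node, and $t_i\neq 0$ is exactly the condition that the first-order deformation smooths the node $p_i$. Hence a first-order deformation smooths \emph{every} node simultaneously iff its image in $\C^n$ lies in the open torus $(\C^\times)^n$, which happens for some deformation iff the linear subspace $\ker(\mathrm{ob})=\mathrm{Image}(T^1_X\to\C^n)$ is not contained in any coordinate hyperplane.

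The second step is to integrate such a first-order smoothing to an honest one-parameter family $\mathcal{X}\to\Delta$ with smooth general fibre $\widetilde{X}$, and this is precisely where the $\partial\bar\partial$-lemma enters. Because $K_X$ is trivial and $X$ satisfies the $\partial\bar\partial$-lemma, I would run a Bogomolov--Tian--Todorov type power-series (Kuranishi) argument to prove that the relevant deformations are \emph{unobstructed}; this is Tian's contribution, and it shows that any class in $\ker(\mathrm{ob})$ meeting $(\C^\times)^n$ integrates. Consequently the whole problem collapses to the first-order statement and it remains only to identify $\ker(\mathrm{ob})$.

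The decisive third step is to compute $\mathrm{ob}$ by duality. Using Serre duality together with $\omega_X\cong\mathcal{O}_X$ one identifies $H^2(X,\Theta_X)^\ast\cong H^1(X,\Omega^1_X)$, and the $\partial\bar\partial$-lemma lets one read this coherent cohomology group as the $(1,1)$-part of $H^2$, i.e.\ as a space of divisor classes on the resolution $\widehat{X}$. The transpose $\mathrm{ob}^\ast$ then becomes the map sending a class $[D]$ to the vector of intersection numbers $(D\cdot C_1,\dots,D\cdot C_n)$, so that
\[ \ker(\mathrm{ob})\;=\;\Big\{(\lambda_1,\dots,\lambda_n)\in\C^n \ \Big|\ \textstyle\sum_{i=1}^n \lambda_i\,(D\cdot C_i)=0 \ \ \forall\,[D]\Big\}\;=\;\Big\{(\lambda_i) \ \Big|\ \textstyle\sum_{i=1}^n\lambda_i[C_i]=0 \text{ in } H_2(\widehat{X},\Q)\Big\}, \]
the last equality by nondegeneracy of the intersection pairing between divisor and curve classes (Poincar\'e duality on the K\"ahler resolution, noting that the $(2,0)$ and $(0,2)$ parts pair trivially with the $C_i$). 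Combining the three steps, a smoothing exists iff the space of relations among the $[C_i]$ is not contained in any coordinate hyperplane, equivalently iff there is a single relation $\sum_i\lambda_i[C_i]=0$ with all $\lambda_i\neq 0$; the "only if" direction is immediate, since the Kodaira--Spencer class of a given smoothing is a point of $\ker(\mathrm{ob})$ with all coordinates nonzero.

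I expect the main obstacle to be the integration of Step 2: proving unobstructedness of the nodal deformations is the genuinely hard analytic input, and it is exactly here that the $\partial\bar\partial$-lemma is indispensable rather than cosmetic. The conceptual heart, however, is the duality of Step 3, and its most delicate point is the identification of $H^2(X,\Theta_X)^\ast$ with $(1,1)$-classes on the resolution and of $\mathrm{ob}^\ast$ with intersection against the $C_i$: this requires the singular Hodge theory of the nodal $X$ to degenerate as in the smooth case (again the $\partial\bar\partial$-lemma) and amounts to the classical \emph{defect} computation distinguishing Weil from Cartier divisors at the nodes. Once both the unobstructedness and this Hodge-theoretic matching are in hand, the equivalence with $\sum_i\lambda_i[C_i]=0$ ($\lambda_i\neq 0$) follows from the elementary linear-algebra observation that a subspace avoids every coordinate hyperplane precisely when it contains a totally nonzero vector.
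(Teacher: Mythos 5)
The paper does not prove this statement: it is quoted verbatim as a classical result of Friedman and Tian, with no argument supplied. Your outline is essentially the standard proof from those sources --- the local-to-global $\mathrm{Ext}$ sequence identifying $\mathrm{Im}\bigl(T^1_X\to\C^n\bigr)=\ker(\mathrm{ob})$, Friedman's Serre-duality computation identifying $\mathrm{ob}^\ast$ with intersection against the $C_i$, and the Bogomolov--Tian--Todorov unobstructedness under the $\partial\bar\partial$-lemma to integrate a first-order smoothing --- and it is correct as a proof sketch, with the genuinely hard inputs (unobstructedness and the defect/Hodge-theoretic identification on the nodal $X$) accurately flagged rather than elided. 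One point worth recording: you correctly place the relation $\sum_i\lambda_i[C_i]=0$ in $H_2(\widehat{X},\Q)$, whereas the paper's statement writes $H_2(X,\Q)$, where the contracted curves are nullhomologous and the condition would be vacuous; the paper's formulation is a typo and yours is the right one (using $h^{2,0}(\widehat X)=0$ so that pairing against divisor classes detects vanishing in $H_2(\widehat X,\Q)$).
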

\begin{Thm}[Smith--Thomas--Yau \cite{STY}]
Let $\widetilde{Y}$ be a symplectic sixfold with embedded Lagrangian $S^3$s', say $L_1,\dots, L_n$. 
Then there is a relation
$
\sum_{i=1}^n \lambda_i [L_i]=0 \ (\lambda_i \ne 0 \ \forall i)
$
in $H_3(\widetilde{Y},\Q)$ if and only if there is a symplectic structure on one of $2^n$ choices of (reversed) conifold transitions of $\widetilde{Y}$ in the Lagrnagians $L_1,\dots,L_n$, 
such that the resulting exceptional $\PP^1$s' are symplectic. 
\end{Thm}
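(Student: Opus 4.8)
The plan is to realise the (reversed) conifold transition as an explicit symplectic surgery, and then to reduce the existence of the resulting symplectic form to a linear condition on the areas of the exceptional curves. First I would fix the two local models. By the Weinstein Lagrangian neighbourhood theorem each $L_i$ has a neighbourhood $T_i$ symplectomorphic, after rescaling, to a neighbourhood of the zero section of $T^*S^3$, which in turn is the smoothing $\{xy-zw=\epsilon\}$ of the local conifold; its contact boundary is the link $S^2\times S^3$ with the contact structure induced by the conical K\"ahler form on $\{xy-zw=0\}$. On the other hand the resolved conifold $\mathcal{O}_{\PP^1}(-1)^{\oplus2}$ carries a one-parameter family of K\"ahler forms whose K\"ahler cone is exactly $\R_{>0}$, parametrised by $a=\int_{\PP^1}\omega>0$, and each is asymptotic to the \emph{same} conical form on the link. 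Since the deformed and resolved pieces share this symplectic boundary, I can cut out each $T_i$ and glue in a resolved piece $R_i$ along a collar of $S^2\times S^3$, producing the surgered sixfold $\widehat Y=Y_0\cup\bigcup_i R_i$ with $Y_0:=\widetilde Y\setminus\bigcup_i T_i$.

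Each node admits two small resolutions, related by a flop that reverses the orientation of the exceptional $\PP^1$; running over the $n$ nodes this produces the $2^n$ choices in the statement. Under the local identifications the link classes behave as $[S^3]\mapsto[L_i]$ on the deformed side and $[S^2]\mapsto[\PP^1_i]$ on the resolved side, so the surgery trades each vanishing $3$-cycle $[L_i]\in H_3$ for an exceptional $2$-cycle $[\PP^1_i]\in H_2$, and the flop flips the sign of $[\PP^1_i]$, equivalently of its symplectic area.

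The crux is a homological identification. A Mayer--Vietoris computation for $\widehat Y=Y_0\cup\bigcup_i R_i$, together with Poincar\'e--Lefschetz duality on $Y_0$, should identify the space of achievable area vectors $\bigl(\int_{\PP^1_i}\hat\omega\bigr)_i$, as $\hat\omega$ ranges over closed extensions of $\omega|_{Y_0}$, with the relation space $R:=\{(\lambda_i)\in\Q^n:\sum_i\lambda_i[L_i]=0 \text{ in } H_3(\widetilde Y;\Q)\}$; this is precisely the symplectic counterpart of the Friedman criterion of Theorem \ref{Friedman-Tian}. Granting this, both directions follow. For the ``if'' direction, a relation with all $\lambda_i\neq0$ lets me pick, for each $i$, the small resolution whose orientation matches $\operatorname{sgn}(\lambda_i)$ and set $a_i:=|\lambda_i|>0$; the resulting vector lies in $R$, hence is realised by a closed $2$-form, which I upgrade to a genuine symplectic form by inflation along the $\PP^1_i$ and a Moser argument in the collars. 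For the ``only if'' direction, a symplectic $\hat\omega$ making every $\PP^1_i$ symplectic has $a_i=\int_{\PP^1_i}\hat\omega\neq0$ and $(a_i)\in R$, which yields a relation $\sum_i\lambda_i[L_i]=0$ with all $\lambda_i\neq0$ once the chosen flops are accounted for.

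The homological bookkeeping above is routine once the local models are fixed. The substantive work, which I expect to be the main obstacle, is the symplectic realisation: first, verifying that the two fillings of the Sasakian link $S^2\times S^3$ genuinely share the same contact boundary, so that the glued form is honestly symplectic rather than merely closed; and second, promoting a closed form with the prescribed periods to a nondegenerate one while controlling the interpolation in the gluing collars. These analytic steps, as opposed to the linear algebra, are where the difficulty is concentrated.
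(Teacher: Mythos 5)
The paper does not prove this statement: it is quoted verbatim as a known theorem of Smith--Thomas--Yau \cite{STY}, so there is no internal proof to compare against. Measured instead against the original argument in \cite{STY}, your outline follows essentially the same route: Weinstein neighbourhoods identifying each $L_i$ with the zero section of $T^*S^3$ (the smoothed conifold), excision and gluing of resolved-conifold pieces along the link $S^2\times S^3$, a Mayer--Vietoris/Lefschetz-duality identification of the achievable period vectors $\bigl(\int_{\PP^1_i}\hat\omega\bigr)_i$ with the relation space of the $[L_i]$, the sign of $\lambda_i$ selecting one of the two small resolutions at each node (whence the $2^n$ choices), and a smallness/interpolation argument for nondegeneracy. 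Two cautions. First, your phrase ``share the same contact boundary'' understates the issue: the K\"ahler forms on $\mathcal{O}_{\PP^1}(-1)^{\oplus 2}$ are not exact, so they cannot equal the cone form outside a compact set, only approach it; the actual proof cuts off the non-exact part in an annular region and verifies nondegeneracy of the interpolated form there by a direct estimate, valid only when the exceptional areas are sufficiently small (so one should take $a_i=\epsilon|\lambda_i|$ for $\epsilon\ll 1$ rather than $a_i=|\lambda_i|$). Second, ``inflation along the $\PP^1_i$'' is not quite the right mechanism, since one does not yet have a symplectic form to inflate; the construction goes the other way, starting from the glued closed form in the correct cohomology class and checking nondegeneracy directly. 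Neither point invalidates your plan---you correctly locate the difficulty there---but they are the substance of the theorem, not routine follow-ups.
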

In our case, the contractions collapse 4-cycles as well as 2-cycles.  
On the other hand, smoothing $G_{k,l} \rightsquigarrow\widetilde{G_{k,l}}$ produces $(k+1)(l+1)-3$ vanishing $S^3$s' 
and smoothing $O_{k,l} \rightsquigarrow\widetilde{O_{k,l}}$ produces $k+l-2$ vanishing $S^1\times S^2$s' and one vanishing $S^3$. 
The generators of these cycles can be found by considering the standard double Riemann surface fibrations \cite{FHKV}(Figure \ref{fig:DoubleRiemannSurf}).  
\begin{figure}[htbp]
 \begin{center} 
  \includegraphics[width=85mm]{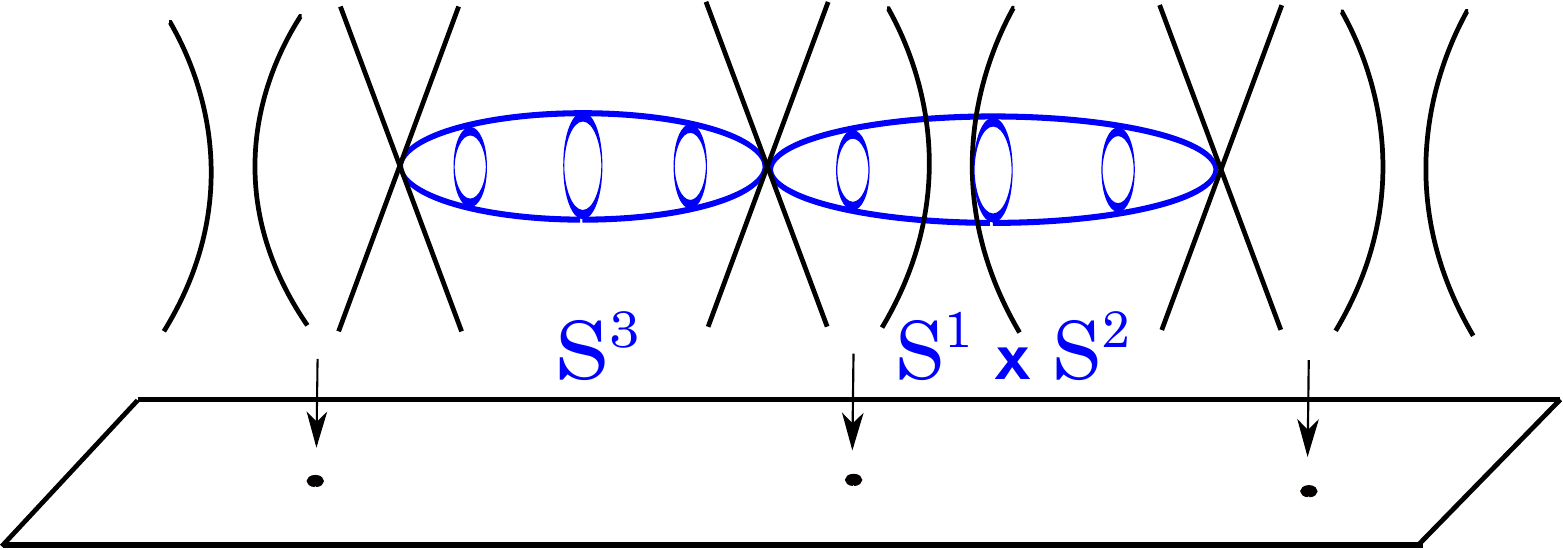}
 \end{center}
 \caption{A double Riemann surface fibration} 
\label{fig:DoubleRiemannSurf}
\end{figure}
It is interesting to investigate the obstructions to the deformations/resolutions of the generalized and orbifolded conifolds in terms of these cycles. 
We hope to come back to these questions in future work. 


\par\noindent{\scshape \small
Department of Mathematics, Kyoto University\\
Kitashirakawa-Oiwake, Sakyo, Kyoto, 606-8502, Japan}
\par\noindent{\ttfamily akanazawa@math.kyoto-u.ac.jp}\\

\par\noindent{\scshape \small
Department of Mathematics and Statistics, Boston University\\
111 Cummington Mall, Boston MA 02215 USA}
\par\noindent{\ttfamily lau@math.bu.edu}


\begin{thebibliography}{99}
                \bibitem[AAK]{AAK}M. Abouzaid, D. Auroux, and L. Katzarkov, Lagrangian fibrations on blowups of toric varieties and mirror symmetry for hypersurfaces, Publ. Math. de l'IH\'ES, 23, Issue 1 (2016) 199--282. 
                \bibitem[AKLM]{AKLM}M. Aganagic, A. Karch, D. L\"ust and A. Miemiec, Mirror symmetries for brane configurations and branes at singularities, Nuclear Phys. B 569 (2000), no. 1-3, 277--302. 
		 \bibitem[Alt]{Alt} K. Altmann, The versal deformation of an isolated toric {G}orenstein singularity, Invent. Math. 128 (1997), no. 3, 443--479.
                \bibitem[Aur]{Aur}D. Auroux, Mirror symmetry and T-duality in the complement of an anticanonical divisor, J. G\"okova Geom. Topol. 1 (2007), 51--91.  
                \bibitem[CnBM09]{CM1}
                R.~Casta\~no Bernard and D.~Matessi, \emph{Lagrangian 3-torus fibrations}, J.
                Differential Geom. \textbf{81} (2009), no.~3, 483--573. 
                \bibitem[CnBM14]{CM}
                R.~Casta\~no Bernard and D.~Matessi, \emph{Conifold transitions via affine geometry and mirror symmetry},
                Geom. Topol. \textbf{18} (2014), no.~3, 1769--1863.
                \bibitem[CCLT]{CCLT}K. Chan, C.-H. Cho, S.-C. Lau and H.-H. Tseng, Gross fibration, SYZ mirror symmetry, and open Gromov--Witten invariants for toric Calabi--Yau orbifolds, J. Diff. Geom. 103 (2016), no.2, 207--288.
                \bibitem[CPU]{CPU}K. Chan, D. Pomerleano and K. Ueda, Lagrangian torus fibrations and homological mirror symmetry for the conifold, Comm. in Math. Physics, 341 (2016), Issue 1, 135--178.
                \bibitem[CLL]{CLL}K. Chan, S.-C. Lau and N.-C. Leung, SYZ mirror symmetry for toric Calabi--Yau manifolds, J. Diff. Geom. 90 (2012) 177--250. 
                \bibitem[CC]{CC}C.-H. Cho and Y.-G. Oh, Floer cohomology and disc instantons of Lagrangian torus fibers in Fano toric manifolds, Asian J. Math. 10 (2006) 773--814.
		 \bibitem[Dav]{Dav} R. Davies, Hyperconifold transitions, mirror symmetry, and string theory, Nuclear Phys. B 850 (2011), no. 1, 214--231.
		 \bibitem[FHKV]{FHKV}B. Feng, Y.-H. He, K. Kennaway and C. Vafa, Dimer models from mirror symmetry and quivering amoebae, Adv. Theor. Math. Phys. 12 (2008), no. 3, 489--545. 
                \bibitem[Fri]{Fri}R. Friedman,  Simultaneous resolution of threefold double points, Math. Ann. 274 (1986), 671--689.
                \bibitem[FOOO]{FOOO}K. Fukaya, Y.-G. Oh, H. Ohta and K. Ono, Lagrangian intersection Floer theory: anomaly and obstruction. Part I/II, AMS/IP Studies in Advanced Mathematics, vol. 46, American Mathematical Society, Providence, RI, 2009. 
                \bibitem[Gol]{Gol}E. Goldstein, Calibrated fibrations on noncompact manifolds via group actions, Duke Math. J. 110 (2001), no. 2, 309--343. 
                \bibitem[Gro]{Gro}M. Gross, Examples of special Lagrangian fibrations, Symplectic geometry and mirror symmetry (Seoul, 2000) (World Scientific Publishing, River Edge, NJ, 2001) 81--109.
                \bibitem[GS]{GS}M. Gross and B. Siebert, From affine geometry to complex geometry, Ann. of Math. (2) 174 (2011), no. 3, 1301--1428.
                \bibitem[HIV]{HIV}K. Hori, A. Iqbal, and C. Vafa, D-branes and mirror symmetry, arXiv:hep-th/0005247.
                \bibitem[KKV]{KKV}S. Katz, A. Klemm and C. Vafa, Geometric engineering of quantum field theories, Nuclear Phys. B 497 (1997), no. 1-2, 173--195. 
                \bibitem[KZ]{KZ}V. Kaibel and G.M. Ziegler, Counting unimodular lattice triangulations, in C.D. Wensley (Ed.), British Combinatorial Surveys, Cambridge University Press, 2003, pp. 277--307. 
                \bibitem[Lau2]{Lau2} S.-C. Lau, Affine elliptic surfaces with type-A singularities and orbi-conifolds, arXiv:1802.08891.
                \bibitem[Lau1]{Lau}S.-C. Lau, Open Gromov--Witten invariants and SYZ under local conifold transitions, J. London Math. Soc. 90(2) (2014), 413--435.
                \bibitem[LLW]{LLW}S.-C. Lau, N. C. Leung and B. Wu, Mirror maps equal SYZ maps for toric Calabi--Yau surfaces, Bull. London Math. Soc. 44 (2012), no.2, 255--270. 
                \bibitem[Leu]{Leu}N.-C. Leung, Mirror symmetry without corrections, Comm. Anal. Geom. 13 (2005), no. 2, 287--331. 
                \bibitem[Mie]{Mie}A. Miemiec, Branes between geometry and gauge theory, Fortschr. Phys. 48 (2000), no. 12, 1143--1227. 
                \bibitem[Mor]{Mor}D. Morrison, Through the looking glass, Mirror Symmetry III (D. H. Phong, L. Vinet, and S.-T. Yau, eds.), American Mathematical Society and International Press, 1999, 263--277. 
		  \bibitem[Ros]{Ros}M. Rossi, Geometric transitions, J. Geom. Phys. 56 (2006), no. 9, 1940--1983.
                \bibitem[Sch]{Sch}C. Schoen, On fiber products of rational elliptic surfaces with section, Math. Z. 197,(1988), 177-199. 
                \bibitem[STY]{STY}I. Smith, R. Thomas and S.-T. Yau, Symplectic conifold transitions, J. Diff. Geom. Vol. 62, No. 2 (2002), 209--242. 
                \bibitem[SYZ]{SYZ}A. Strominger, S.-T. Yau and E. Zaslow, Mirror symmetry is T-duality, Nuclear Phys. B 479 (1996), no. 1-2, 243--259.
                \bibitem[Sze]{Sze}B. Szendr\H{o}i, Contractions and monodromy in homological mirror symmetry, in: Strings and geometry (Douglas, Gauntlett and Gross ed.), AMS, 2004.
                \bibitem[Tia]{Tia}G. Tian, Smoothing 3-folds with trivial canonical bundle and ordinary double points, Essays on mirror manifolds (1992) 458--479, Int. Press, Hong Kong.
\end{thebibliography}
\end{document}